\newcommand{\RR}{{\mathbb R}}
\newcommand{\N}{\mathbb{N}}
\newcommand{\bx}{x}
\newcommand{\by}{y}
\newcommand{\K}{\mathbf{K}}
\newcommand{\Y}{\mathbf{Y}}
\newcommand{\F}{\mathbf{F}}
\newcommand{\bs}{\mathbf{S}}
\newcommand{\bX}{x}
\newcommand{\bY}{y}
\newcommand{\ud}{\mathrm{d}}
\newcommand{\td}[1]{\tilde{#1}}
\newcommand{\rank}{\mbox{\upshape rank}}
\newcommand{\psdp}{\mbox{\upshape\tiny primal}}
\newcommand{\dsdp}{\mbox{\upshape\tiny dual}}
\newcommand{\re}{\mathbb{R}}
\newcommand{\csk}[1]{{\mathcal{C}_{#1}}[x]}
\newcommand{\cs}{\mathcal{C}[x]}
\newcommand{\cyk}[1]{\mathcal{C}_{#1}[y]}
\newcommand{\cy}{\mathcal{C}[y]}
\def\af{\alpha}
\newcommand{\mH}{\mathscr{H}}
\newcommand{\mL}{\mathscr{L}}
\newcommand{\wt}[1]{\widetilde{#1}}
\newcommand{\epi}{\text{\upshape epi}}
\newcommand{\co}{\text{\upshape co}}
\newcommand{\cl}{\text{\upshape cl}}
\newcommand{\qm}{\mathbf{qmodule}}
\newtheorem{algorithm}{Algorithm}[section]
\newtheorem{condition}{Condition}[section]
\newtheorem{theorem}{Theorem}[section]
\newtheorem{lemma}{Lemma}[section]
\newtheorem{example}{Example}[section]
\newtheorem{definition}{Definition}[section]
\newtheorem{remark}{Remark}[section]
\newtheorem{proposition}{Proposition}[section]
\def\EES{{\accent"5E e}\kern-.5em\raise.8ex\hbox{\char'23 }}
\def\ow{o\kern-.42em\raise.82ex\hbox{
   \vrule width .12em height .0ex depth .075ex \kern-0.16em \char'56}\kern-.07em}
\def\OW{o\kern-.460em\raise1.36ex\hbox{
\vrule width .13em height .0ex depth .075ex \kern-0.16em
\char'56}\kern-.07em}
\def\DD{D\kern-.7em\raise0.4ex\hbox{\char '55}\kern.33em}
\title{On Solving a Class of Fractional Semi-infinite Polynomial Programming Problems}
\author{Feng Guo}
\address[Feng Guo]{School of Mathematical Sciences, Dalian  University of Technology, Dalian, 116024, China}
\email{fguo@dlut.edu.cn}
\author{Liguo Jiao$^*$}
\address[Liguo Jiao]{School of Mathematical Sciences, Soochow University, Suzhou 215006, Jiangsu Province, China}
\email{hanchezi@163.com}
\thanks{$^*$Corresponding Author.}
\date{ \today}
\begin{document}

\begin{abstract}
In this paper, we study a class of fractional
semi-infinite polynomial programming (FSIPP) problems, in which
the objective is a fraction of a convex polynomial and a concave
polynomial, and the constraints consist of
infinitely many convex polynomial inequalities.
To solve such a problem, we first 
reformulate it to a pair of primal and dual conic optimization
problems, which reduce to semidefinite programming (SDP)
problems if we can bring sum-of-squares structures into the conic
constraints.
To this end, we provide a characteristic cone constraint qualification
for convex
semi-infinite programming problems to guarantee strong duality and
also the attainment of the solution in the dual problem, which is of
its own interest. 
In this framework, we first present a hierarchy of SDP relaxations with
asymptotic convergence for the FSIPP problem whose index set is
defined by finitely many polynomial inequalities. 
Next, we study four cases of the FSIPP problems which can be reduced to
either a single SDP problem or a finite sequence of SDP problems,
where at least one minimizer can be extracted. Then, 
we apply this approach to the four corresponding multi-objective cases
to find efficient solutions.
\end{abstract}
\subjclass[2010]{65K05; 90C22; 90C29; 90C34}
\keywords{fractional optimization;
convex semi-infinite systems; semidefinite programming relaxations;
sum-of-squares; polynomial optimization}

\maketitle

\section{Introduction}

In this paper, we consider the fractional semi-infinite polynomial
programming (FSIPP) problem in the following form:
\begin{equation}\label{FMP}
	\left\{
	\begin{aligned}
		r^{\star}:=\min_{x\in\RR^m}&\  \frac{f(x)}{g(x)}\\
		\text{s.t.}&\ \psi_1(x)\le 0,\ldots,\psi_s(x)\le 0, \\
		&\ p(\bx,\by)\le 0,\ \ \forall \by\in \Y\subset\RR^n,
	\end{aligned}\right.
\end{equation}
where $f, g$, $\psi_1,\ldots,\psi_s\in\RR[\bX]$ and
$p\in\RR[\bX,\bY]$. Here, $\RR[x]$ (resp. $\RR[x,y]$) denotes the ring
of real polynomials in $x=(x_1,\ldots,x_m)$ (resp., $x=(x_1,\ldots,x_m)$
and $y=(y_1,\ldots,y_n)$).
Denote by $\K$ and $\bs$ the feasible set and the optimal solution set
of \eqref{FMP}, respectively.
In this paper, we assume that $\bs\neq\emptyset$ and make the
following assumptions on \eqref{FMP}:
\begin{quote}
		{\sf (A1):}\ $\Y$ is compact;
		$f$, $-g$, each $\psi_i$ and $p(\cdot, \by)$ for each $y\in
		\Y$ are all convex in $x$;\\
		{\sf (A2):}\ Either $f(x)\ge 0$ and $g(x)>0$ for all $x\in\K$; or $g(x)$
		is affine and $g(x)>0$ for all $x\in\K.$
\end{quote}
The feasible set $\K$ is convex by {\sf (A1)}, while
	the objective of \eqref{FMP} is generally {\it not} convex.
	The assumption {\sf (A2)} ensures that the objective of
	\eqref{FMP} is well-defined. It is commonly adopted in the literature
about fractional optimization problems
\cite{Jeyakumar2012,Nguyen2016,Schaible2003}, and can be satisfied by
many practical optimization models
\cite{Bajalinov2003,Nguyen2016,Stancu-Minasian1997}.
If $\Y$ is noncompact, the technique of homogenization can be
applied (c.f. \cite{SIPPSDP}).

Over the last several decades, due to a great number of applications
in many fields, semi-infinite programming (SIP) has attracted a great
interest and has been a very active research area
\cite{Goberna2017,Goberna2018,Hettich1993,Still2007}.
Numerically, SIP problems can be solved by different approaches
including, for instance, discretization methods,
local reduction methods, exchange methods,
simplex-like methods, feasible point methods etc; see
\cite{Goberna2017,Hettich1993,Still2007}
and the references therein for details.
A main difficulties in solving general SIP problems
is that the feasibility test of a given point is equivalent to
{\itshape globally} solving a lower level subproblem which is
generally nonlinear and nonconvex.

To the best of our knowledge, there are only limited research results devoted
to semi-infinite polynomial optimization by  exploiting features of
{\it polynomial} optimization problems. For instance, Parpas and Rustem \cite{PR2009}
proposed a discretization-like method to solve minimax polynomial optimization
problems, which can be reformulated as semi-infinite polynomial
programming (SIPP) problems.
Using polynomial approximation and an appropriate hierarchy of semidefinite
programming (SDP) relaxations, Lasserre \cite{Lasserre2011} presented an
algorithm to solve the generalized SIPP problems. Based on an exchange
scheme, an SDP relaxation method for solving
SIPP problems was proposed in \cite{SIPPSDP}.
By using representations of nonnegative polynomials in the
univariate case, an SDP method was given in \cite{XSQ}
for linear SIPP problems with the index set being closed intervals.
For convex SIPP problems, Guo and Sun \cite{GS2020} proposed an SDP
relaxation method by combining the sum-of-squares representation of
the Lagrangian
function with high degree perturbations \cite{Lasserre05} and Putinar's
representation \cite{Putinar1993}
of the constraint polynomial on the index set.

In this paper, in a way similar to \cite{GS2020}, we will derive an
SDP relaxation method for the FSIPP problem \eqref{FMP}.
Different from \cite{GS2020}, we treat the problem in a more
systematical manner. We first reformulate the FSIPP problem to a conic
optimization problem and its Lagrangian dual, which involve two
convex cones of polynomials in the variables and the parameters,
respectively (Section \ref{sec::SR}). Under suitable assumptions on
these cones, approximate minimum and minimizers of the FSIPP problem
can be obtained by solving the conic reformulations. If we can 
bring sum-of-squares structures into these cones, the conic
reformulations reduce to a pair of SDP problems and become tractable. 
To this end, inspired by Jeyakumar and Li~\cite{Jeyakumar2010},
we provide a characteristic cone constraint qualification for convex
semi-infinite programming problems to guarantee the strong duality and
the attachment of the solution in the dual problem, which is of its
own interest. We remark that this constraint qualification, which is crucial for
some applications
(see Section \ref{sec::mfsipp}), is weaker
than the Slater condition used in \cite{GS2020}.

In what follows, we first present a hierarchy of SDP relaxations for
the FSIPP problem whose index set is defined by finitely many
polynomial inequalities (Section \ref{sec::SDP}). This
is done by introducing appropriate quadratic modules to the conic reformulations.
The asymptotic convergence of the optimal values of the SDP
relaxations to the minimum of the FSIPP problem can be established by
Putinar's Positivstellensatz \cite{Putinar1993}.
Moreover, when the FSIPP problem has a unique minimizer, this
minimizer can be approximated from the optimal solutions to the SDP
relaxations. By means of existing complexity results of Putinar's
Positivstellensatz, we can derive some convergence rate analysis of
the SDP relaxations. 
We also present some discussions on the stop criterion for such SDP
relaxations.
Next, we restrict our focus
on four cases of FSIPP problems for which the SDP relaxation method is exact or
has finite convergence and at least one minimizer can be extracted
(Section \ref{sec::SDP4}). 
The reason for this restriction is due to some applications of the
FSIPP problem where exact minimum and minimizers are required.
In particular, we study the application of our SDP method to the
corresponding multi-objective FSIPP problems, 
where the objective function is a vector valued function
with each component being a fractional function. We
aim to find efficient solutions (see Definition \ref{def::es}) to such
problems.
Some sufficient efficiency conditions and duality results for such
problems can be found in \cite{Chuong2016,Verma2017,ZZ1,ZZ2}. However,
as far as we know, very few algorithmic developments
are available for such problems in the literature
because of the difficulty of checking feasibility of a given point.

This paper is organized as follows. Some notation and preliminaries
are given in Section \ref{sec::pre}. 
The FSIPP problem is reformulated as a conic optimization problem in
Section \ref{sec::SR}. We present a hierarchy of SDP
relaxations for FSIPP problems in Section \ref{sec::SDP}. In Section
\ref{sec::SDP4}, we consider four specified cases of
the FSIPP problems and the application of
the SDP method to the multi-objective cases. 
Some conclusions are given in Section \ref{sec::conclusions}.

\section{Notation and Preliminaries}\label{sec::pre}

In this section, we collect some  notation and preliminary results
which will be used in this paper. 
The symbol $\N$ (resp., $\re$, $\RR_+$) denotes
the set of nonnegative integers (resp., real numbers, nonnegative real
numbers). For any $t\in \re$, $\lceil t\rceil$ denotes the smallest
integer that is not smaller than $t$.
For $u\in \re^m$,
$\Vert u\Vert$ denotes the standard Euclidean norm of $u$.
For $\alpha=(\alpha_1,\ldots,\alpha_n)\in\N^n$,
$|\alpha|=\alpha_1+\cdots+\alpha_n$.
For $k\in\N$, denote $\N^n_k=\{\alpha\in\N^n\mid |\alpha|\le
k\}$ and $\vert\N^n_k\vert$ its cardinality.
For variables $x \in \re^m$, $y \in \re^n$ and
$\beta\in\N^m, \af \in \N^n$, $x^\beta$, $y^\af$ denote
$x_1^{\beta_1}\cdots x_m^{\beta_m}$,
$y_1^{\af_1}\cdots y_n^{\af_n}$, respectively.
For $h \in \RR[x]$, we denote by $\deg (h)$ its (total) degree.
For $k\in\N$, denote by $\RR[x]_k$ (resp., $\RR[y]_k$) the set of polynomials
in $\RR[x]$ (resp., $\RR[y]$) of degree up to $k$.
For $A=\RR[x],\ \RR[y],\ \RR[x]_k,\ \RR[y]_k$, denote by $A^*$ the dual
space of linear functionals from $A$ to $\RR$.

\vskip 5pt

A polynomial $h\in\RR[x]$ is said to be a
sum-of-squares (s.o.s) of polynomials if it can be written as $h=\sum_{i=1}^l
h_i^2$ for some $h_1,\ldots,h_l \in\RR[x]$.
The symbols $\Sigma^2[x]$ and $\Sigma^2[y]$ denote the sets of polynomials that are
s.o.s in $\RR[x]$ and $\RR[y]$, respectively.
Notice that not every nonnegative polynomial can be written as s.o.s,
see \cite{Reznick96someconcrete}. We recall the following properties
about polynomials nonnegative on certain sets, which will be used in
this paper.
\begin{theorem}[Hilbert's theorem]\label{th::hilbert}
	Every nonnegative polynomial $h\in\RR[x]$ can be written as s.o.s
	in the following cases: {\rm (i) $m=1$; (ii) $\deg(h)=2$; (iii) $m=2$}
	and $\deg(h)=4$.
\end{theorem}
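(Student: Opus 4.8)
The plan is to treat the three cases separately, since each rests on a distinct classical argument, and none of them follows from the others. Throughout, I may harmlessly assume $h$ has even degree (a nonnegative polynomial cannot have odd degree, and if $h$ has degree $0$ or is identically zero the claim is trivial).

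\emph{Case (i): $m=1$.} Here $h\in\RR[x_1]$ is a univariate nonnegative polynomial, necessarily of even degree $2d$ with positive leading coefficient. The idea is to factor $h$ over $\CC$. Its real roots must occur with even multiplicity (otherwise $h$ would change sign), and its non-real roots occur in conjugate pairs $\lambda,\bar\lambda$. Grouping the real roots into a perfect square $q(x_1)^2$ and pairing each $(x_1-\lambda)(x_1-\bar\lambda)=|x_1-\lambda|^2$, one writes $h = c\,q(x_1)^2 \prod_j \bigl((x_1-a_j)^2+b_j^2\bigr)$ with $c>0$. Expanding the product and absorbing $\sqrt{c}$, the Gauss/Brahmagupta identity $(u^2+v^2)(u'^2+v'^2)=(uu'-vv')^2+(uv'+u'v)^2$ lets one combine everything into a sum of just two squares of real polynomials. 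So in fact $h=h_1^2+h_2^2$.

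\emph{Case (ii): $\deg(h)=2$.} Write $h(x)=x^{\mathsf T}Ax + 2b^{\mathsf T}x + c$ with $A$ a real symmetric $m\times m$ matrix. Nonnegativity of $h$ on all of $\RR^m$ forces, first, $A\succeq 0$ (look at the leading behavior along any ray), and then, completing the square using a pseudo-inverse, $b$ must lie in the range of $A$ and the Schur-complement scalar $c - b^{\mathsf T}A^{+}b$ must be $\ge 0$; equivalently the $(m+1)\times(m+1)$ symmetric matrix $\begin{bmatrix} A & b\\ b^{\mathsf T} & c\end{bmatrix}$ is positive semidefinite. Any PSD matrix has a Cholesky-type decomposition $M = \sum_i v_i v_i^{\mathsf T}$, and applying this to the vector $(x,1)$ expresses $h(x)=\sum_i \langle v_i,(x,1)\rangle^2$, a sum of squares of affine (hence polynomial) forms.

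\emph{Case (iii): $m=2$, $\deg(h)=4$.} This is the genuinely hard case and the one I expect to be the main obstacle; it is Hilbert's 1888 theorem and has no elementary one-line proof. The standard route is to homogenize: set $H(x_1,x_2,x_3)$ to be the degree-$4$ homogenization of $h$, a ternary quartic form that is nonnegative on $\RR^3$ (one must check the homogenization stays nonnegative, using that $\deg h = 4$ is even so no cancellation of the top part occurs). One then invokes the classical fact — provable via a dimension count on the linear system of quadrics through the (complex) zeros of $H$, or via the modern argument using the topology of the real points of the associated curve $H=0$ together with properties of theta characteristics / the Cayley–Bacharach relations — that every nonnegative ternary quartic is a sum of three squares of quadratic forms, $H=\sum_{i=1}^3 Q_i^2$. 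Dehomogenizing by setting $x_3=1$ returns $h=\sum_{i=1}^3 Q_i(x_1,x_2,1)^2$, a sum of squares in $\RR[x_1,x_2]$. Since this is a deep classical result, in the write-up I would most likely cite it (e.g. Hilbert's original paper, or a standard reference such as Powers–Reznick or Rudin's exposition) rather than reproduce the proof; the "obstacle'' here is really that a self-contained argument is out of scope, so the honest plan is to reduce cleanly to the homogeneous statement and quote it.
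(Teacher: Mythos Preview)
The paper does not actually prove this statement: Theorem~\ref{th::hilbert} is listed in the preliminaries as a classical result (``Hilbert's theorem'') and is simply quoted without proof, to be used later as a black box. So there is no ``paper's own proof'' to compare against.

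Your proposal is a sound outline of the standard arguments. Cases (i) and (ii) are handled correctly and completely. For case (iii) you rightly identify that the nontrivial content is Hilbert's 1888 theorem on nonnegative ternary quartics, reduce to it by homogenization, and plan to cite it rather than reprove it --- which is exactly the level at which the paper operates (indeed, the paper cites even less, giving no argument at all). One small quibble: when you homogenize a nonnegative bivariate quartic $h$ to a ternary quartic $H$, nonnegativity of $H$ on all of $\RR^3$ (including the hyperplane $x_3=0$) requires a brief limiting argument, since $h\ge 0$ only directly gives $H\ge 0$ on $\{x_3\neq 0\}$; this is routine by continuity but worth one sentence in a careful write-up.
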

\begin{theorem}[The $S$-lemma]\label{th::slemma}
	Let $h,$ $q\in\RR[x]$ be two quadratic polynomials and assume that there
	exists $u^0\in\RR^m$ with $q(u^0)>0$. The following assertions are
	equivalent: {\rm (i) $q(x)\ge 0$ $\Rightarrow$ $h(x)\ge 0$;} {\rm (ii)} there
	exists $\lambda\ge 0$ such that $h(x)\ge \lambda q(x)$ for all
	$x\in\RR^m$.
\end{theorem}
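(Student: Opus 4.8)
The plan is as follows. The implication (ii) $\Rightarrow$ (i) is immediate, and I would dispose of it first: if $h(x)\ge\lambda q(x)$ for all $x$ with some $\lambda\ge 0$, then any $x$ with $q(x)\ge 0$ satisfies $h(x)\ge\lambda q(x)\ge 0$. All the content is in (i) $\Rightarrow$ (ii), for which I would run a homogenization-and-separation argument.

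First I would homogenize. Writing $h(x)=x^{\top}A_hx+2b_h^{\top}x+c_h$ and $q(x)=x^{\top}A_qx+2b_q^{\top}x+c_q$, introduce an auxiliary scalar $x_0$ and the homogeneous quadratic forms
\[
\widehat h(x_0,x)=x^{\top}A_hx+2x_0\,b_h^{\top}x+c_hx_0^2,\qquad
\widehat q(x_0,x)=x^{\top}A_qx+2x_0\,b_q^{\top}x+c_qx_0^2
\]
on $\RR^{m+1}$, so that $\widehat h(1,x)=h(x)$ and $\widehat q(1,x)=q(x)$. The main external ingredient I would invoke is Dines' theorem on the joint range of two quadratic forms, by which
\[
\mathcal{D}:=\bigl\{\,(\widehat h(z),\widehat q(z)):z\in\RR^{m+1}\,\bigr\}
\]
is a convex cone in $\RR^2$.

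Second, I would translate hypothesis (i) into the assertion that $\mathcal{D}$ avoids the region $\{(a,b):a<0,\ b\ge 0\}$, i.e. that $\widehat q(z)\ge 0$ implies $\widehat h(z)\ge 0$ for every $z=(x_0,x)\in\RR^{m+1}$. For $x_0\neq 0$ this is precisely (i) applied to $x/x_0$, after dividing through by $x_0^2>0$. The only point requiring care is the stratum $x_0=0$, where the claim becomes $x^{\top}A_qx\ge 0\Rightarrow x^{\top}A_hx\ge 0$; I would establish this by evaluating $q$ and $h$ along the rays $t\mapsto u^{0}\pm tx$ issuing from the Slater point and letting $t\to\infty$ (the two signs are needed to cover the subcase where the quadratic part of $q$ vanishes in the direction $x$ but its linear part does not). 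This is the one place where the hypothesis $q(u^{0})>0$ is genuinely used, to tame the ``points at infinity'' created by homogenization.

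Third, since $\mathcal{D}$ is a convex cone disjoint from the nonempty open convex set $\{a<0,\ b>0\}$, a separating hyperplane yields $(\mu,\nu)\neq(0,0)$ with $\mu a+\nu b\ge 0$ for all $(a,b)\in\mathcal{D}$; testing the separating inequality on $\{a<0,\ b>0\}$ as $a\to-\infty$ and as $b\to+\infty$ forces $\mu\ge 0$ and $\nu\le 0$, so $\mu\,\widehat h(z)\ge(-\nu)\,\widehat q(z)$ for all $z$. If $\mu=0$ then $\nu<0$ and hence $\widehat q(z)\le 0$ for all $z$, contradicting $\widehat q(1,u^{0})=q(u^{0})>0$; therefore $\mu>0$, and putting $z=(1,x)$ gives $h(x)\ge\lambda q(x)$ with $\lambda:=-\nu/\mu\ge 0$, which is exactly (ii). I expect the one real obstacle to be the convexity of the joint range $\mathcal{D}$ — this is essentially the entire theorem, and is Dines' result; once it is available, the homogenization bookkeeping, the limiting argument on the stratum $x_0=0$, and the planar separation are all routine.
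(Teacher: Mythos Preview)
Your proof is correct and follows one of the standard routes to the $S$-lemma, via Dines' theorem on the convexity of the joint range of two homogeneous quadratic forms followed by a planar separation. The homogenization bookkeeping and the limiting argument along rays from the Slater point to handle the stratum $x_0=0$ are both sound.

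Note, however, that the paper does not actually prove Theorem~\ref{th::slemma}: it is stated in Section~\ref{sec::pre} as a classical preliminary result and simply invoked later (in the proof of Theorem~\ref{th::sdp12}) without justification. There is therefore no ``paper's own proof'' to compare against; your argument supplies what the paper merely quotes.
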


\begin{proposition}{\rm\cite[Example 3.18]{ScheidererCurve}}\label{prop::s}
	Let $q\in\RR[x]$ and the set $\mathcal{K}=\{x\in\RR^m\mid q(x)\ge
	0\}$ be compact. If $h \in \RR[x]$ is nonnegative on $\mathcal{K}$ and the
	following conditions hold
	\begin{enumerate}[\upshape (i)]
		\item $h$ has only finitely many zeros on $\mathcal{K},$
			each lying in the interior of $\mathcal{K};$
		\item the Hessian $\nabla^2 h$ is positive definite on each of
			these zeros$,$
	\end{enumerate}
	then $h=\sigma_0+\sigma_1q$ for some $\sigma_0,
	\sigma_1\in\Sigma^2[x]$.
\end{proposition}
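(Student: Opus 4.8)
The plan is to obtain this statement as a special case of Scheiderer's local--global principle for membership in Archimedean quadratic modules (it is also subsumed by Marshall's boundary Hessian criterion). Put $M:=\Sigma^2[x]+q\cdot\Sigma^2[x]\subseteq\RR[x]$; since $q\cdot q\in\Sigma^2[x]$ this is exactly the preordering generated by $q$, and its nonnegativity set is $\mathcal{K}$. The goal is to prove $h\in M$. One may assume $\mathcal{K}\neq\emptyset$, since otherwise the Positivstellensatz for the empty set puts $-1$ into $M$, so that $M=\RR[x]\ni h$ and there is nothing to prove.

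First I would check that $M$ is Archimedean. As $\mathcal{K}=\{x\in\RR^m\mid q(x)\ge0\}$ is compact, pick $N>0$ with $\mathcal{K}\subset\{x\mid\Vert x\Vert^2<N\}$; then $N-\Vert x\Vert^2$ is strictly positive on $\mathcal{K}$, so Schm\"udgen's Positivstellensatz gives $N-\Vert x\Vert^2\in M$, which is the Archimedean condition.

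Next I would verify the local condition demanded by the local--global principle at each of the (finitely many) zeros $u$ of $h$ on $\mathcal{K}$. Because $u\in\mathrm{int}(\mathcal{K})$, $h(u)=0$ and $h\ge0$ on a neighborhood of $u$, the point $u$ is a local minimizer of $h$, so $\nabla h(u)=0$; together with the hypothesis $\nabla^2 h(u)\succ0$, the Taylor expansion in $z:=x-u$ reads $h(u+z)=z^{\top}Az+R(z)$ with $A=\tfrac12\nabla^2 h(u)\succ0$ and $\mathrm{ord}(R)\ge3$. A standard successive-approximation argument in $\RR[[z]]$ then shows $h$ is a sum of squares of formal power series at $u$: diagonalize $A$, and at each stage write the current remainder (an element of some $\mathfrak{m}^k$, where $\mathfrak{m}$ is the maximal ideal) as $\sum_i\ell_i c_i$ with $c_i\in\mathfrak{m}^{k-1}$ and absorb it by replacing each $\ell_i$ with $\ell_i+\tfrac12 c_i$; the orders of the successive remainders tend to $\infty$ and $\RR[[z]]$ is $\mathfrak{m}$-adically complete, so the $\ell_i$ converge to power series with $h=\sum_i\ell_i^2$. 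Consequently $h$ lies in the $\mathfrak{m}$-adic closure of the extension of $M$ to the completed local ring $\RR[[x-u]]$ -- equivalently, the boundary Hessian condition holds at the interior point $u$ -- which is exactly the local hypothesis required.

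Feeding these facts -- $M$ Archimedean, $h\ge0$ on $\mathcal{K}$ with only finitely many zeros, the local condition satisfied at each -- into Scheiderer's local--global principle yields $h\in M$, that is, $h=\sigma_0+\sigma_1 q$ for some $\sigma_0,\sigma_1\in\Sigma^2[x]$. The main obstacle is the local--global principle itself: it is the single deep input, proved through the real spectrum together with a patching/gluing argument, whereas establishing the Archimedean property and the local power-series normal form is routine. If one prefers to avoid quoting that principle, Marshall's boundary Hessian theorem can be invoked in its place in the final step.
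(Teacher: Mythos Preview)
The paper does not supply its own proof of this proposition: it is quoted verbatim from Scheiderer's work (the citation \cite[Example~3.18]{ScheidererCurve} in the proposition header is the entire justification, and the statement sits in the preliminaries section among other results collected without argument). So there is no ``paper's proof'' to compare against beyond Scheiderer's original.

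That said, your outline is correct and is precisely the route Scheiderer takes in the cited example: the quadratic module $M=\Sigma^2[x]+q\,\Sigma^2[x]$ is Archimedean by Schm\"udgen applied to the compact set $\mathcal{K}$, the hypotheses (i)--(ii) force each zero to be an interior nondegenerate local minimum so that $h$ is a sum of squares in the completed local ring $\RR[[x-u]]$, and the local--global principle (Scheiderer's main theorem in that paper, or equivalently Marshall's boundary Hessian criterion) then yields $h\in M$ globally. Your remark that the deep input is the local--global principle itself, with the Archimedean and local steps being routine, is accurate. One minor point: your reduction of the empty case via ``$-1\in M$'' is fine but could be stated more directly---if $\mathcal{K}=\emptyset$ then $-q>0$ everywhere, so $-q\in\Sigma^2[x]$ by Schm\"udgen (or Stengle) on the single constraint, and then $-1=(-q)\cdot\tfrac{1}{\sup(-q)}+\text{(positive constant)}$ is not quite right; the cleanest argument is that compactness of $\emptyset$ is vacuous and one should simply note $q<0$ everywhere, whence Schm\"udgen's theorem on $\{q\ge0\}=\emptyset$ already gives $M=\RR[x]$.
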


For $h, h_1, \ldots, h_\kappa\in\RR[x]$, 
let us recall some background about Lasserre's hierarchy
\cite{LasserreGlobal2001} for the polynomial optimization problem
\begin{equation}\label{eq::PO}
		h^{\star}:=\min_{\bx\in\RR^m}\ h(x)\quad \text{s.t.}\quad
		x\in S:=\{x\in\RR^m \mid h_1(x)\ge 0, \ldots, h_{\kappa}(x)\ge 0\}\not= \emptyset.
\end{equation}
Let $H:=\{h_1,\ldots,h_{\kappa}\}$ and $h_0=1$ for convenience.  
We denote by 
\[
  \qm(H):=\left\{\sum_{j=0}^\kappa h_j\sigma_j\ \Big|\ 
\sigma_j \in \Sigma^2[x], j=0,1,\ldots,\kappa \right\}
\]
the {\itshape quadratic module} generated by $H$ and denote by 
\[
	\qm_k(H):=\left\{\sum_{j=0}^\kappa h_j\sigma_j\ \Big|\ 
\sigma_j \in \Sigma^2[x], \,
\deg(h_j\sigma_j)\le 2k,\ j=0,1,\ldots,\kappa\right\}
\]
its  $k$-th {\itshape quadratic module}.
It is clear that if $h\in\qm(H)$, then
$h(x)\ge 0$ for any $x\in S$. However, the converse is not necessarily
true.

\begin{definition}\label{def::AC}
	We say that $\qm(H)$
is {\itshape Archimedean}  if
there exists $\phi\in \qm(H)$ such that the inequality $\phi(x)\ge 0$
defines a compact set in $\RR^m;$ or equivalently$,$ if for all
$\phi\in\RR[x],$ there is some $N\in\N$ such that $N\pm
\phi\in\qm(H)$ $($c.f.$,$ \cite{Kmoment}$)$. 
\end{definition}

Note that for any compact set $S$ we can always
force the associated quadratic module to be Archimedean
by adding a
redundant constraint $M-\Vert x\Vert^2_2\ge 0$ in the description of
$S$ for a sufficiently large $M$.

\begin{theorem}\label{th::PP}{\upshape\cite[{Putinar's Positivstellensatz\/}]{Putinar1993}}
Suppose that $\qm(H)$ is Archimedean. If a polynomial $\psi\in\RR[x]$ is
		positive on $S,$ then $\psi\in\qm_k(H)$ for some $k\in\N$. 
\end{theorem}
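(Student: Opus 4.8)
The plan is to prove the equivalent ``moment'' formulation and argue by contradiction. Note first that $\qm(H)=\bigcup_{k\in\N}\qm_k(H)$, since any representation $\sum_j h_j\sigma_j$ has a finite degree; hence it suffices to prove $\psi\in\qm(H)$. So suppose, for contradiction, that $\psi>0$ on $S$ but $\psi\notin\qm(H)$. The goal is to manufacture a Borel probability measure $\mu$ with $\supp{\mu}\subseteq S$ and $\int\psi\,\ud\mu\le 0$, which is impossible because $\psi$ is strictly positive on the compact set $S$ (compactness of $S$ being forced by the Archimedean hypothesis: $S$ is a closed subset of a set $\{\phi\ge 0\}$ with $\phi\in\qm(H)$).

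First I would run a separation argument. The Archimedean condition says exactly that $1$ is an order unit for $\qm(H)$: for every $h\in\RR[x]$ there is $\varepsilon>0$ with $1-\varepsilon h\in\qm(H)$, so (using $0\in\qm(H)$ and convexity) $1$ lies in the algebraic interior of the convex cone $\qm(H)\subset\RR[x]$. Since $\psi\notin\qm(H)$, a Hahn--Banach/Eidelheit separation --- valid with no reference to a topology, precisely because we have an algebraic-interior point --- produces a nonzero linear functional $L\colon\RR[x]\to\RR$ with $L\ge 0$ on $\qm(H)$ and $L(\psi)\le 0$; the interior-point property then forces $L(1)>0$, so we may rescale to $L(1)=1$.

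Next I would represent $L$ by a measure via a GNS-type construction. Since $p^{2}\in\qm(H)$ for every $p$, the bilinear form $\langle p,q\rangle:=L(pq)$ is positive semidefinite; quotienting by its kernel and completing yields a Hilbert space $\mH$ with unit vector $\xi$ the class of $1$. Applying the Archimedean property to $x_i^{2}$ gives $N_i\in\N$ with $N_i-x_i^{2}\in\qm(H)$, whence $(N_i-x_i^{2})p^{2}\in\qm(H)$ and so $\|M_{x_i}[p]\|^{2}=L(x_i^{2}p^{2})\le N_i L(p^{2})$; thus multiplication by $x_i$ descends to a well-defined symmetric operator that extends to a bounded self-adjoint operator $M_{x_i}$ on $\mH$, and the $M_{x_i}$ pairwise commute. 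The spectral theorem for a commuting tuple of bounded self-adjoint operators furnishes a projection-valued measure $E$ on $\RR^{m}$, and $\mu(\cdot):=\langle E(\cdot)\xi,\xi\rangle$ is a compactly supported Borel probability measure with $L(p)=\int p\,\ud\mu$ for all $p\in\RR[x]$. Since $h_j\in\qm(H)$ we get $\int h_j p^{2}\,\ud\mu=L(h_j p^{2})\ge 0$ for every $p$; as polynomials are dense in $L^{2}(\mu)$, this forces $h_j\ge 0$ $\mu$-almost everywhere, hence $\supp{\mu}\subseteq\{h_j\ge 0\}$, and intersecting over $j=1,\dots,\kappa$ gives $\supp{\mu}\subseteq S$. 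Therefore $0\ge L(\psi)=\int_{S}\psi\,\ud\mu\ge(\min_{S}\psi)\,\mu(S)>0$, a contradiction. Hence $\psi\in\qm(H)$, i.e.\ $\psi\in\qm_{k}(H)$ for some $k\in\N$.

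The main obstacle I expect is the functional-analytic core of the third paragraph: checking that the multiplication operators are bounded, commuting, and essentially self-adjoint on the dense domain, and invoking the multivariate spectral theorem correctly; a secondary delicate point is making the separation step rigorous in the topology-free vector space $\RR[x]$, where it is exactly the order-unit property extracted from ``Archimedean'' that rescues Hahn--Banach. By contrast, deducing $\supp{\mu}\subseteq\{h_j\ge 0\}$ from $\int h_j p^{2}\,\ud\mu\ge 0$ is routine measure theory (approximate $\mathbf{1}_{\{h_j\le -\delta\}}$ by polynomials in $L^{2}(\mu)$, using that $\mu$ is compactly supported). An alternative route would be to first prove Schmüdgen's representation for the preordering generated by $H$ (via Stengle's Positivstellensatz together with compactness) and then descend to the quadratic module, but the direct Hilbert-space argument above is cleaner and is in essence Putinar's original proof.
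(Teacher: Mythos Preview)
The paper does not prove this theorem: it is stated as a background result and attributed to Putinar \cite{Putinar1993} with no argument supplied, so there is no ``paper's own proof'' to compare against. Your proposal is a correct sketch of the standard operator-theoretic proof (separation via the order-unit property of $1$, GNS construction, boundedness of the multiplication operators from the Archimedean bound $N_i - x_i^2 \in \qm(H)$, spectral theorem, then the support argument), which is essentially Putinar's original route; the self-identified ``obstacles'' are exactly the places where care is needed, and your handling of them is sound.
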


For a polynomial $\psi(x)=\sum_{\alpha\in\N^m} \psi_\alpha
x^\alpha\in\RR[x]$ where $\psi_\alpha$ denotes the coefficient of the
monomial $x^\alpha$ in $\psi$, define
the norm
\begin{equation}\label{eq::fnorm}
	\Vert \psi\Vert:=\max_{\alpha\in\N^m}\frac{\vert
	\psi_\alpha\vert}{\tbinom{|\alpha|}{\alpha}}.
\end{equation}

We have the following result for an estimation of the order $k$ in
Theorem \ref{th::PP}. 
\begin{theorem}{\upshape \cite[Theorem 6]{NieSchweighofer}}\label{th::complexity}
Suppose that $\qm(H)$ is Archimedean and $S\subseteq
(-\tau_S,\tau_S)^m$ for some $\tau_S>0$. Then there is some
positive $c\in\RR$ $($depending only on $h_j$'s$)$ such that for all
$\psi\in\RR[x]$ of degree $d$ with $\min_{x\in S}\psi(x)>0,$ we have
$\psi\in\qm_k(H)$ whenever
\[
	k\ge c\exp\left[\left(d^2m^d\frac{\Vert \psi\Vert\tau_S^d}{\min_{x\in
	S}\psi(x)}\right)^c\right]. 
\]
\end{theorem}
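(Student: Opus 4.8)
This is the explicit-degree strengthening of Putinar's Positivstellensatz (Theorem \ref{th::PP}), so the natural line of attack is to revisit a proof of that theorem and track degrees throughout. I would split the argument into two parts: an \emph{effective Schm\"udgen step}, producing for $\psi>0$ on $S$ a representation $\psi=\sum_{e\in\{0,1\}^{\kappa}}\sigma_e\prod_{j=1}^{\kappa}h_j^{e_j}$ with every $\sigma_e\in\Sigma^2[x]$ of explicitly bounded degree; and a \emph{descent} from this preordering representation to a genuine element of $\qm_k(H)$, carried out with the fixed sum-of-squares data supplied by the Archimedean hypothesis. The bound on $k$ then comes from multiplying together the estimates from the two parts.

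First I record what Archimedeanness gives. By Definition \ref{def::AC} there are a fixed $R\in\N$ and $\sigma_0,\ldots,\sigma_{\kappa}\in\Sigma^2[x]$, all depending only on the $h_j$'s, with
\[
  R-\Vert x\Vert^2=\sum_{j=0}^{\kappa}h_j\sigma_j ,
\]
so that $S\subseteq\{x: R-\Vert x\Vert^2\ge 0\}$, each $h_j$ is bounded on $S$, and $(R-\Vert x\Vert^2)^N\in\qm(H)$ with degree $O(N)$ for every $N\in\N$. The quantities $R$, $\deg\sigma_j$, and the sup-norms of the $h_j$ on $S$ are absorbed into the constant $c$; this is why $c$ can be made to depend only on the $h_j$, the dependence on $\tau_S$ being relegated to the explicit factor $\tau_S^d$.

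The heart is the effective Schm\"udgen step, for which I would follow Schweighofer's analysis of the complexity of Schm\"udgen's Positivstellensatz. After homogenizing and mapping the standard simplex onto the box $[-\tau_S,\tau_S]^m$ that contains $S$, an explicit form of P\'olya's theorem (the Powers--Reznick degree bound) applies; it is only \emph{polynomial} in $d$ and in $\Vert\psi\Vert/\min_{x\in S}\psi$, and the rescaling from the unit box $[-1,1]^m$ introduces the factor $\tau_S^d$ while the number $\vert\N^m_d\vert$ of relevant monomials --- with the multinomial normalization of \eqref{eq::fnorm} --- accounts for $m^d$. To upgrade ``$\psi$ positive on the box'' to a certificate valid on $S$ one inserts a suitable power of $\prod_j h_j^{e_j}$, and the power needed is controlled by a {\L}ojasiewicz-type inequality for $h_1,\ldots,h_\kappa$ on a neighbourhood of $S$; it is the {\L}ojasiewicz \emph{exponent} that sits in the exponent of the final bound.

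Finally, the descent: using only the displayed identity for $R-\Vert x\Vert^2$ and the boundedness of the $h_j$ on $S$, each product $\prod_j h_j^{e_j}$ with $|e|\ge 2$ is rewritten as a sum-of-squares combination of $1$ and the $h_j$ of controlled degree, so the Schm\"udgen representation collapses to $\psi=\sum_{j=0}^{\kappa}h_j\tau_j$ with $\tau_j\in\Sigma^2[x]$ and $\deg(h_j\tau_j)$ explicitly bounded, i.e.\ $\psi\in\qm_k(H)$. Composing the estimates --- the polynomial P\'olya bound in $d$ and $\Vert\psi\Vert/\min_{x\in S}\psi$, the factors $\tau_S^d$ and $m^d$, the {\L}ojasiewicz exponent in the exponent, and the bounded overhead of the descent --- yields exactly the stated threshold $k\ge c\exp[(d^2m^d\Vert\psi\Vert\tau_S^d/\min_{x\in S}\psi)^c]$. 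The main obstacle is the {\L}ojasiewicz inequality bridging positivity on the box and positivity on $S$, together with its quantitative use inside P\'olya's theorem: this is where the exponential is genuinely born, and keeping its exponent, the P\'olya degree, and the resulting bound on $k$ mutually consistent is delicate bookkeeping rather than the place for a new idea.
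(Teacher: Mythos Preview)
The paper does not supply its own proof of this theorem: it is quoted verbatim as a known result from \cite[Theorem~6]{NieSchweighofer} and used as a black box in Section~\ref{sec::SDP} to derive Proposition~\ref{prop::rate}. There is therefore no in-paper argument to compare your proposal against.

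That said, your outline is a faithful high-level sketch of the original Nie--Schweighofer proof: the effective Schm\"udgen step built on the Powers--Reznick quantitative P\'olya theorem, the {\L}ojasiewicz inequality bridging positivity on $S$ with positivity on an ambient box (this is indeed where the exponential appears), and the reduction from the preordering representation to the quadratic module via the fixed Archimedean certificate. If you intend to flesh this out, the genuinely delicate point is the one you flag at the end---controlling the {\L}ojasiewicz exponent and threading it through the P\'olya bound---and you would need to consult \cite{NieSchweighofer} directly for the precise bookkeeping, since the present paper offers no details.
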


For an integer $k\ge \max\{\lceil\deg(h)/2\rceil, k_0\}$ where 
$k_0:=\max\{\lceil\deg(h_i)/2\rceil, i=1,\ldots,\kappa\}$, 
the $k$-th Lasserre's relaxation for
\eqref{eq::PO} is
\begin{equation}\label{eq::lppp}
		h_k^{\dsdp}:=\inf_{\mL}\ \mL(h)\ \ \text{s.t.}\ \ 
		\mL\in(\qm_k(H))^*,\ \mL(1)=1,
\end{equation}
and its dual problem is
\begin{equation}\label{eq::lppd}
		h_k^{\psdp}:=\sup_{\rho}\ \rho\ \
		\text{s.t.}\ \ h-\rho\in\qm_k(H). 
\end{equation}
For each $k\ge k_0$, \eqref{eq::lppp} and \eqref{eq::lppd} can be
reduced to a pair of primal and dual SDP problems, and we always have
$h_k^{\psdp}\le h_k^{\dsdp}\le h^{\star}$ (c.f. \cite{LasserreGlobal2001}).
The convergence of $h_k^{\dsdp}$ and $h_k^{\psdp}$ to $h^{\star}$ as
$k\rightarrow\infty$ can
be established by Putinar's Positivstellensatz
\cite{LasserreGlobal2001,Putinar1993} under the Archimedean condition. If
there exists an integer $k^{\star}$ such that $h_k^{\psdp}=h_k^{\dsdp}=h^{\star}$
for each $k\ge k^{\star}$, we say that Lasserre's hierarchy for
\eqref{eq::PO}
has finite convergence. 
To certify $h_k^{\dsdp}=h^{\star}$
when it occurs, a sufficient condition on a minimizer of
\eqref{eq::lppp} called flat extension condition \cite{CFKmoment} is
available. A weaker
condition called flat truncation condition was
proposed by Nie in \cite{NieFlatTruncation}. Precisely,
for a linear functional $\mL\in(\RR[x]_{2k})^*$,
denote by $\mathbf{M}_k(\mL)$ the associated $k$-th moment matrix which is
indexed by $\mathbb{N}^m_{k}$, with
$(\alpha,\beta)$-th entry being $\mL(x^{\alpha+\beta})$ for $\alpha, \beta \in
\mathbb{N}^m_{k}$.
\begin{condition}{\upshape\cite[flat truncation
	condition]{NieFlatTruncation}}\label{con::truncation}
	A minimizer $\mL^{\star}\in(\RR[x]_{2k})^*$ of \eqref{eq::lppp}
	satisfies the following rank condition$:$ there exists an integer
	$k'$ with 
	$\max\{\lceil\deg(h)/2\rceil, k_0\}\le k'\le  k$ such that
	\[
		\rank\ \mathbf{M}_{k'-k_0}(\mL^{\star})=\rank\
		\mathbf{M}_{k'}(\mL^{\star}).
	\]
\end{condition}
Nie \cite[Theorem 2.2]{NieFlatTruncation} proved that Lasserre's
hierarchy has
finite convergence if and only if the flat truncation holds, under some
generic assumptions. In particular, we have the following proposition.
\begin{proposition}{\upshape \cite[c.f. Theorem
	2.2]{NieFlatTruncation}}\label{prop::fc}
	Suppose that the set $\{x\in\RR^m\mid h(x)=h^{\star}\}$ is finite$,$
	the set of global minimizers of \eqref{eq::PO} is nonempty$,$ and for
	$k$ big enough the optimal value of \eqref{eq::lppd} is achievable
	and there is no duality gap between \eqref{eq::lppp} and
	\eqref{eq::lppd}. Then$,$ for $k$ sufficiently large$,$
	$h_k^{\psdp}=h_k^{\dsdp}=h^{\star}$ if and only if every minimizer of
	\eqref{eq::lppp} satisfies the flat truncation condition.
\end{proposition}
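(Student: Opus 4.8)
The plan is to obtain this proposition by specializing Nie's characterization of finite convergence via flat truncation \cite[Theorem 2.2]{NieFlatTruncation}, together with the Curto--Fialkow flat extension theorem \cite{CFKmoment}, to the situation at hand. First I would dispose of the hypotheses: the set of global minimizers of \eqref{eq::PO} is contained in $\{x\in\RR^m\mid h(x)=h^\star\}$, so the stated assumptions force that set to be a nonempty finite set and $h^\star$ to be attained on $S$; thus the finiteness of the level set $\{x\in\RR^m\mid h(x)=h^\star\}$ is just a convenient, slightly stronger, stand-in for the finiteness of the minimizer set used in \cite{NieFlatTruncation}, and we are within the scope of \cite[Theorem 2.2]{NieFlatTruncation}.

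For the ``if'' direction, I would fix a large $k$ and an arbitrary minimizer $\mL^\star$ of \eqref{eq::lppp}, and let $k'$ be the index furnished by Condition \ref{con::truncation}. Since the ranks $\rank\,\mathbf{M}_t(\mL^\star)$ are nondecreasing in $t$, the equality $\rank\,\mathbf{M}_{k'-k_0}(\mL^\star)=\rank\,\mathbf{M}_{k'}(\mL^\star)$ is a flat extension condition, so by \cite{CFKmoment} the truncation $\mL^\star|_{\RR[x]_{2k'}}$ has a finitely atomic representing measure $\mu=\sum_{i}\lambda_i\delta_{u^{(i)}}$ with $\lambda_i>0$ and $\sum_i\lambda_i=\mL^\star(1)=1$; moreover, because $\mL^\star\in(\qm_k(H))^*$ the localizing matrices $\mathbf{M}_{k'-\lceil\deg(h_j)/2\rceil}(h_j\mL^\star)$ are positive semidefinite, which places every atom $u^{(i)}$ in $S$. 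Hence $h_k^{\dsdp}=\mL^\star(h)=\sum_i\lambda_i h(u^{(i)})\ge h^\star$, and together with the standing inequalities $h_k^{\psdp}\le h_k^{\dsdp}\le h^\star$ and the assumed absence of a duality gap this gives $h_k^{\psdp}=h_k^{\dsdp}=h^\star$.

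For the ``only if'' direction, I would assume $h_k^{\psdp}=h_k^{\dsdp}=h^\star$ for all large $k$ and use complementary slackness. Because \eqref{eq::lppd} attains its value, $h-h^\star=\sum_{j=0}^\kappa h_j\sigma_j$ with $\sigma_j\in\Sigma^2[x]$ and $\deg(h_j\sigma_j)\le 2k$; for any minimizer $\mL^\star$ of \eqref{eq::lppp} the no-gap assumption gives $\mL^\star(h-h^\star)=h_k^{\dsdp}-h^\star=0$, so $\sum_{j}\mL^\star(h_j\sigma_j)=0$, and since each term is nonnegative (as $\mL^\star\in(\qm_k(H))^*$) we get $\mL^\star(h_j\sigma_j)=0$ for every $j$, in particular $\mL^\star(\sigma_0)=0$. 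Writing $\sigma_0=\sum_l p_l^2$ and using $\mathbf{M}_k(\mL^\star)\succeq 0$, each $p_l$ lies in the kernel of $\mathbf{M}_k(\mL^\star)$. Following \cite{NieFlatTruncation}, this forces $\mL^\star$ to annihilate, in degrees $\le 2k$, an ideal whose real variety is contained in the finite set $S\cap\{x\in\RR^m\mid h(x)=h^\star\}$; since that variety is finite, the nondecreasing ranks $\rank\,\mathbf{M}_t(\mL^\star)$ are bounded above uniformly by its cardinality, hence stabilize, and a stabilization of the ranks yields exactly an index $k'$ satisfying the equality in Condition \ref{con::truncation} once $k$ is beyond the stabilization level.

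The routine parts are the manipulations with moment and localizing matrices and the conic weak-duality bookkeeping. The main obstacle is the ``only if'' direction: making rigorous that $\mL^\star$ behaves as though it were supported on the finite variety of minimizers before a representing measure is known to exist, and that the stabilization threshold can be taken uniformly over all large $k$ and all minimizers. This is precisely the content of \cite[Theorem 2.2]{NieFlatTruncation}, on which I would rely for that step rather than reproving it.
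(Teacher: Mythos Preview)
The paper does not give its own proof of this proposition: it is stated in the preliminaries as a cited result, attributed to \cite[Theorem~2.2]{NieFlatTruncation}, with no accompanying argument. So there is nothing in the paper to compare your proposal against; your sketch is effectively an outline of Nie's original proof, and you yourself acknowledge at the end that the substantive ``only if'' direction is precisely the content of \cite[Theorem~2.2]{NieFlatTruncation}. In that sense your proposal is consistent with how the paper treats the result---as a black-box citation---though you have gone further by sketching the mechanism (flat extension via Curto--Fialkow for one direction, complementary slackness plus rank stabilization for the other). That sketch is essentially correct in spirit, but since the paper does not attempt a proof, the appropriate comparison is simply: the paper cites, you (partially) explain.
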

Moreover, if a minimizer $\mL^{\star}\in(\RR[x]_{2k})^*$ of
\eqref{eq::PO} satisfies the flat extension condition or the flat
truncation condition, we can extract finitely many global minimizers of
\eqref{eq::PO} from the moment matrix $\mathbf{M}_k(\mL^{\star})$ by
solving a linear algebra problem (c.f. \cite{CFKmoment,LasserreHenrion}). 
Such procedure has been implemented in the software {\sf GloptiPoly}
\cite{gloptipoly} developed by Henrion, Lasserre and L{\"o}fberg.

\vskip 8pt

We say that a linear functional $\mL\in(\RR[x])^*$ has a representing
measure $\mu$ if
there exists a Borel measure $\mu$ on $\RR^m$ such that
\[
	\mL(x^\alpha)=\int_{\RR^m} x^{\alpha}\ud\mu(x),\quad \forall \alpha\in\N^m.
\]
For $k\in\N$, we say that $\mL\in(\RR[x]_k)^*$ has a representing measure
$\mu$ if the above holds for all $\alpha\in\N^m_{k}$.
By \cite[Theorem 1.1]{CFKmoment}, Condition
\ref{con::truncation} implies that $\mL^{\star}$ has an atomic representing
measure. Each atom of the measure is a global minimizer of
\eqref{eq::PO} and can be extracted by the procedure presented in
\cite{LasserreHenrion}.
\vskip 5pt

We end this section by introducing some important  properties on convex
polynomials.
\begin{definition}
	A polynomial $h\in\RR[x]$ is {\it coercive} whenever the lower level
	set $\{x\in\RR^m\mid h(x)\le \alpha\}$ is a $($possibly empty$)$
	compact set$,$ for all $\alpha\in\RR$.
\end{definition}
\begin{proposition}{\upshape \cite[Lemma
	3.1]{JEYAKUMAR201434}}\label{prop::sc}
	Let $h\in\RR[x]$ be a convex polynomial. If $\nabla^2h(u')\succ
	0$ at some point $u'\in\RR^m,$ then $h$ is coercive and strictly
	convex on $\RR^m.$
\end{proposition}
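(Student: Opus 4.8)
The plan is to reduce both claims—coercivity and strict convexity—to the study of the behavior of $h$ along arbitrary lines through the distinguished point $u'$. Fix any unit vector $d\in\RR^m$ and consider the univariate polynomial $t\mapsto \varphi_d(t):=h(u'+td)$. Since $h$ is convex, each $\varphi_d$ is a convex univariate polynomial, and $\varphi_d''(0)=d^{\top}\nabla^2h(u')\,d>0$ by the positive-definiteness hypothesis. The key observation is that a convex univariate polynomial whose second derivative is strictly positive at one point cannot be affine; hence $\deg\varphi_d\ge 2$, and its leading term has even degree with positive leading coefficient (convexity forces this). Consequently $\varphi_d(t)\to +\infty$ as $|t|\to\infty$. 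This already gives the seed of coercivity along every line, and I would use it twice: once to rule out directions of recession and once to upgrade convexity to strict convexity.

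For \emph{coercivity}, I would argue as follows. Suppose for contradiction that some sublevel set $\{h\le\alpha\}$ is unbounded. Because $h$ is convex, this sublevel set is a closed convex set, so its unboundedness produces a nonzero recession direction $d$: there is a point $x_0$ with $h(x_0+td)\le\alpha$ for all $t\ge 0$. Translating the line-restriction argument to base point $x_0$ instead of $u'$, convexity of $h$ still forces $\varphi:=h(x_0+t d)$ to be convex in $t$; being bounded above on $[0,\infty)$, it must be nonincreasing there, hence $\varphi'(t)\le 0$ for all $t\ge0$. On the other hand, $h$ convex implies its directional second derivatives are monotone; comparing with the line through $u'$ in the \emph{same} direction $d$ (where $\varphi_{d}''(0)>0$ and $\varphi_d$ is a genuine convex polynomial tending to $+\infty$), one deduces via convexity of $h$ that $h$ cannot be bounded on any half-line in direction $d$. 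The cleanest way to package this is: convexity of $h$ gives, for the function $s\mapsto h(u'+sd)$, that it is a convex polynomial which is \emph{not} affine (its second derivative at $0$ is positive), so it is coercive in $s$; then a standard convexity inequality (e.g. comparing $h$ on the two parallel lines through $u'$ and through $x_0$) forces $h(x_0+td)\to\infty$ as $t\to\infty$, contradicting the bound $\alpha$. Hence every sublevel set is bounded, and being closed it is compact; this is exactly coercivity.

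For \emph{strict convexity}, suppose $h$ were not strictly convex. Then there exist distinct points $a,b$ and some $\lambda\in(0,1)$ with $h(\lambda a+(1-\lambda)b)=\lambda h(a)+(1-\lambda)h(b)$; by convexity, $h$ is then affine on the whole segment $[a,b]$, and in fact (a standard fact for convex polynomials) affine on the entire line through $a$ and $b$, say in direction $d=\frac{b-a}{\|b-a\|}$. Thus $t\mapsto h(a+td)$ is an affine polynomial. Now I would invoke convexity of $h$ to compare this line with the parallel line through $u'$: for convex $h$, the map $t\mapsto h(u'+td)-h(a+td)$ is convex in $t$ (difference of $h$ along a line and an affine function is convex), so $t\mapsto h(u'+td)$ is itself affine plus convex, i.e. convex—fine—but more sharply, convexity of $h$ forces the second-order behavior along parallel lines to be ``ordered'' in a way incompatible with one line being affine and a parallel one being strictly convex at a point. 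Concretely, for $\tau\in[0,1]$ the point $u'+td$ can be written as a convex combination of points on the line through $a$ in direction $d$ and a fixed point, and pushing this through Jensen's inequality shows $s\mapsto h(u'+sd)$ is affine as well, contradicting $\varphi_d''(0)>0$. Therefore no such segment exists and $h$ is strictly convex on $\RR^m$.

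The main obstacle, and the step I would be most careful about, is the passage ``affine on a segment $\Rightarrow$ affine on parallel lines, in particular on the line through $u'$'': this is where convexity of the polynomial $h$ (as opposed to a general convex function) really does the work, and it has to be invoked correctly—either through the monotonicity of second directional derivatives of a convex function, or through the elementary fact that a convex polynomial that is affine on a line is affine on every translate of that line (which follows by writing the restriction to a parallel line as a limit of convex combinations involving the original line). Once that lemma is in hand, both the coercivity contradiction and the strict-convexity contradiction close immediately against the hypothesis $\nabla^2 h(u')\succ 0$. I expect the authors' proof (following \cite{JEYAKUMAR201434}) to phrase this via the recession/asymptotic cone of a convex polynomial, which is the slick bookkeeping device for exactly this ``parallel-line'' phenomenon.
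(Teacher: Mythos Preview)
The paper does not prove this proposition: it is quoted verbatim from \cite[Lemma 3.1]{JEYAKUMAR201434} and used as a black box, so there is no ``paper's own proof'' to compare against.

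Your overall plan---restrict to lines, use $\varphi_d''(0)>0$ to force $\deg\varphi_d\ge 2$, and then argue that this behavior propagates to every parallel line---is the right one, and it does yield both conclusions. The one place where your write-up is not yet a proof is exactly the step you flag: ``affine along one line in direction $d$ $\Rightarrow$ affine along every parallel line in direction $d$.'' Your Jensen sketch can be made rigorous, but as written it is a gesture. Here is a clean way to close it. For a convex function $h$ on $\RR^m$ the recession function
\[
h_\infty(d)\;=\;\sup_{t>0}\frac{h(x_0+td)-h(x_0)}{t}
\]
is independent of the base point $x_0$. If $h$ is affine on the line $a+\RR d$, say $h(a+td)=h(a)+ct$, then $h_\infty(d)=c$ and $h_\infty(-d)=-c$ are both finite. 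On the other hand, along the line through $u'$ you have already shown that $\varphi_d$ is a convex univariate polynomial of degree at least $2$, hence $\varphi_d(t)/t\to+\infty$, which forces $h_\infty(d)=+\infty$. This contradiction simultaneously rules out (i) the existence of a recession direction for any sublevel set (coercivity) and (ii) the existence of a segment on which $h$ is affine (strict convexity). With this lemma in hand, the rest of your argument goes through; the phrases about ``second-order behavior along parallel lines being ordered'' and ``monotonicity of second directional derivatives'' can then be dropped, since they are neither needed nor quite correct as stated.
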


Recall also a subclass of convex polynomials in $\RR[x]$ introduced
by Helton and Nie \cite{SRCSHN}.
\begin{definition}{\upshape \cite{SRCSHN}}\label{sos-convex}
	A polynomial $h\in\RR[x]$ is s.o.s-convex if its Hessian $\nabla^2
	h$ is an s.o.s-matrix$,$ i.e.$,$ there is some integer $r$ and some matrix
	polynomial $H\in\RR[x]^{r\times m}$ such that $\nabla^2
	h(x)=H(x)^TH(x)$.
\end{definition}

In fact, Ahmadi and Parrilo \cite{Ahmadi2012} has proved that a convex polynomial
$h\in\RR[x]$ is s.o.s-convex if and only if $m=1$ or $\deg(h)=2$ or
$(m,\deg(h))=(2,4)$. In particular, the class of s.o.s-convex
polynomials contains the
classes of separable convex polynomials and convex quadratic
functions.

The significance of s.o.s-convexity is that
it can be checked numerically by solving an SDP problem (see
\cite{SRCSHN}), while checking the convexity of a polynomial is generally
NP-hard (c.f. \cite{Ahmadi2013}).
Interestingly, an extended Jensen's inequality holds for s.o.s-convex
polynomials.
\begin{proposition}{\upshape\cite[Theorem 2.6]{LasserreConvex}}\label{prop::Jensen}
	Let $h\in\RR[x]_{2d}$ be s.o.s-convex$,$ and let
	$\mL\in(\RR[x]_{2d})^*$ satisfy $\mL(1)=1$ and $\mL(\sigma)\ge 0$ for
	every $\sigma\in\Sigma^2[x]\cap\RR[x]_{2d}$. Then$,$ $$\mL(h(x))\ge
	h(\mL(x_1),\ldots,\mL(x_m)).$$
\end{proposition}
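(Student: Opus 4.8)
The plan is to reduce the claimed inequality to the defining positivity property of $\mL$ by exhibiting the gradient inequality for $h$ as an explicit sum of squares of degree at most $2d$. Write $z:=(\mL(x_1),\ldots,\mL(x_m))\in\RR^m$ and set
\[
  R(x):=h(x)-h(z)-\nabla h(z)^{T}(x-z).
\]
Since $h\in\RR[x]_{2d}$ while the other two terms have degree at most $1$, we have $R\in\RR[x]_{2d}$. The crux is to show $R\in\Sigma^2[x]$; granting this, $R$ is a sum of squares of degree $\le 2d$, hence a sum of squares of polynomials each of degree $\le d$ (the leading forms of the summands cannot cancel in $R$), so in fact $R\in\Sigma^2[x]\cap\RR[x]_{2d}$. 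Then the hypothesis on $\mL$ gives $\mL(R)\ge 0$, and expanding by linearity together with $\mL(1)=1$ and $\mL(x_j)=z_j$ yields $\mL(x-z)=0$ and hence $\mL(R)=\mL(h)-h(z)$, which is precisely the desired bound $\mL(h(x))\ge h(\mL(x_1),\ldots,\mL(x_m))$.

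To prove $R\in\Sigma^2[x]$, I would use Taylor's formula with integral remainder expanded at $z$:
\[
  R(x)=\int_0^1(1-t)\,(x-z)^{T}\nabla^2h\big(z+t(x-z)\big)(x-z)\,\ud t.
\]
By Definition \ref{sos-convex} there is $H\in\RR[x]^{r\times m}$ with $\nabla^2h=H^{T}H$, so the integrand equals $(1-t)\sum_{i=1}^{r}p_i(x,t)^2$, where $p_i(x,t):=\sum_{j=1}^{m}H_{ij}(z+t(x-z))(x_j-z_j)$ is a polynomial in $x$ whose coefficients are polynomials in $t$. Writing $p_i(x,t)=\sum_{k}a_{i,k}(x)\,t^{k}$ with $a_{i,k}\in\RR[x]$ and integrating term by term, one gets $\int_0^1(1-t)p_i(x,t)^2\,\ud t=\sum_{k,l}c_{kl}\,a_{i,k}(x)a_{i,l}(x)$ with $c_{kl}:=\int_0^1(1-t)t^{k+l}\,\ud t$. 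The symmetric matrix $(c_{kl})$ is positive semidefinite, since $\sum_{k,l}c_{kl}\xi_k\xi_l=\int_0^1(1-t)\big(\sum_k\xi_kt^{k}\big)^2\,\ud t\ge 0$ for every real vector $\xi$; factoring $(c_{kl})=\sum_\nu v_\nu v_\nu^{T}$ then exhibits $\int_0^1(1-t)p_i(x,t)^2\,\ud t=\sum_\nu\big(\sum_k (v_\nu)_k\,a_{i,k}(x)\big)^2\in\Sigma^2[x]$. Summing over $i$ shows $R\in\Sigma^2[x]$. (Alternatively, one may invoke the known fact that s.o.s-convexity of $h$ is equivalent to $h(x)-h(y)-\nabla h(y)^{T}(x-y)\in\Sigma^2[x,y]$ and then specialize $y=z$.)

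The only genuinely delicate point is this middle step: passing from the pointwise factorization $\nabla^2h=H^{T}H$ to a sum-of-squares representation of $R$ requires handling the $t$-integral, and the key observation is that integration against the nonnegative weight $(1-t)\,\ud t$ on $[0,1]$ preserves the sum-of-squares structure because the associated Hankel-type moment matrix $(c_{kl})$ is positive semidefinite. Everything else — the degree bookkeeping that keeps us inside $\RR[x]_{2d}$, and the final evaluation of $\mL(R)$ via linearity — is routine.
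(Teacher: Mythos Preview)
The paper does not prove this proposition; it is stated as a cited result from \cite{LasserreConvex} without argument. Your proof is correct and is essentially Lasserre's original approach: show that the first-order Taylor remainder $R(x)=h(x)-h(z)-\nabla h(z)^{T}(x-z)$ lies in $\Sigma^2[x]\cap\RR[x]_{2d}$ and then apply $\mL$.

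One simplification worth noting within this paper's framework: the central step---proving $R\in\Sigma^2[x]$---is exactly Lemma~\ref{lem::sosconvex}. Indeed, $R$ differs from $h$ by an affine polynomial, so $R$ is s.o.s-convex, and by construction $R(z)=0$ and $\nabla R(z)=0$; Lemma~\ref{lem::sosconvex} then gives $R\in\Sigma^2[x]$ immediately. Your integral-remainder argument with the positive semidefinite moment matrix $(c_{kl})$ is correct and effectively reproves that lemma from scratch, which makes your write-up self-contained but longer than necessary given the tools already available in the paper. The degree bookkeeping (that an s.o.s polynomial of degree $\le 2d$ is a sum of squares of polynomials of degree $\le d$) and the final linearity computation are both handled correctly.
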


The following result plays a significant role for the SDP
relaxations of \eqref{FMP} in Section \ref{sec::SDP4}.
\begin{lemma}{\upshape\cite[Lemma 8]{SRCSHN}}\label{lem::sosconvex}
	Let $h\in\RR[x]$ be s.o.s-convex. If $h(u)=0$ and $\nabla h(u)=0$
	for some $u\in\RR^m,$ then $h$ is an s.o.s polynonmial.
\end{lemma}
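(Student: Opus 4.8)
The plan is to use the characterization of s.o.s-convexity together with the extended Jensen's inequality, via the standard trick of evaluating the polynomial against a carefully chosen linear functional. Since $h$ is s.o.s-convex, by the Ahmadi--Parrilo characterization we are in one of the cases $m=1$, $\deg(h)=2$, or $(m,\deg(h))=(2,4)$; in each of these cases Hilbert's theorem (Theorem~\ref{th::hilbert}) says that \emph{every} nonnegative polynomial is an s.o.s. So it suffices to show that $h$ is nonnegative on $\RR^m$. First I would argue this directly: since $h$ is convex and $\nabla h(u)=0$, the point $u$ is a global minimizer of $h$, hence $h(x)\ge h(u)=0$ for all $x\in\RR^m$. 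This already gives the result by Hilbert's theorem.

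If one prefers a proof that does not invoke the Ahmadi--Parrilo classification (and this is presumably closer to the argument in \cite{SRCSHN}), I would instead proceed as follows. Write $d=\lceil\deg(h)/2\rceil$, so $h\in\RR[x]_{2d}$, and suppose for contradiction that $h$ is \emph{not} an s.o.s. The set $\Sigma^2[x]\cap\RR[x]_{2d}$ is a closed convex cone in the finite-dimensional space $\RR[x]_{2d}$; since $h$ lies outside it, by the separating hyperplane theorem there is a linear functional $\mL\in(\RR[x]_{2d})^*$ with $\mL(\sigma)\ge 0$ for all $\sigma\in\Sigma^2[x]\cap\RR[x]_{2d}$ but $\mL(h)<0$. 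After normalizing we may take $\mL(1)=1$ (note $\mL(1)=\mL(1^2)\ge 0$, and $\mL(1)>0$ because otherwise $\mL$ vanishes on $1,x_1,\dots,x_m$ forces a contradiction with $\mL(h)<0$ after a perturbation argument; this normalization step needs a little care). Then Proposition~\ref{prop::Jensen} (the extended Jensen inequality for s.o.s-convex polynomials) applies and yields
\[
  0>\mL(h)\ \ge\ h\bigl(\mL(x_1),\ldots,\mL(x_m)\bigr).
\]
On the other hand, since $h$ is convex with $\nabla h(u)=0$ we have $h\equiv h(u)=0$ at its minimum, hence $h\ge 0$ everywhere, so $h(\mL(x_1),\dots,\mL(x_m))\ge 0$, a contradiction. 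Therefore $h$ is an s.o.s.

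The main obstacle, and the only genuinely delicate point, is the normalization $\mL(1)=1$ in the separation argument: a priori the separating functional could satisfy $\mL(1)=0$, in which case Proposition~\ref{prop::Jensen} does not apply verbatim. The standard fix is to replace $\mL$ by $\mL+\varepsilon\,\mathrm{ev}_{u}$ for small $\varepsilon>0$, where $\mathrm{ev}_u$ is evaluation at the point $u$ (which is nonnegative on all of $\RR[x]$ and has $\mathrm{ev}_u(1)=1$); for $\varepsilon$ small this still satisfies $\mL(h)<0$ because $h(u)=0$, and now the value at $1$ is positive, so we can rescale. One then has to check that $h(u)=0$ and $\nabla h(u)=0$ are exactly what makes this perturbation harmless. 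The remaining steps --- convexity forcing $u$ to be a global minimizer, and Hilbert's theorem or Jensen's inequality closing the argument --- are routine.
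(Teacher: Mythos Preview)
The paper does not actually prove this lemma; it merely cites \cite[Lemma~8]{SRCSHN}, so there is no in-paper proof to compare against. Let me assess your two approaches on their own.

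Your first approach contains a genuine error. The Ahmadi--Parrilo result does \emph{not} say that every s.o.s-convex polynomial has parameters $(m,\deg h)$ in $\{(1,\ast),(\ast,2),(2,4)\}$; it says that in those $(m,d)$ regimes the cones of convex and s.o.s-convex polynomials coincide. S.o.s-convex polynomials exist in every dimension and degree (e.g.\ $x_1^{2d}+\cdots+x_m^{2d}$), so you cannot invoke Hilbert's theorem this way. (The paper's one-line summary of \cite{Ahmadi2012} is itself phrased misleadingly, which may have led you astray.)

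Your second approach is correct. The only delicate point is indeed the normalization $\mL(1)>0$, and your perturbation $\mL\mapsto\mL+\varepsilon\,\mathrm{ev}_u$ is exactly the right fix: since $h(u)=0$ the value $\mL(h)$ is unchanged, while $\mL(1)$ becomes strictly positive and $\mL$ stays nonnegative on squares. After rescaling, Proposition~\ref{prop::Jensen} applies, and the contradiction follows because convexity with $\nabla h(u)=0$ forces $h\ge 0$ globally.

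For comparison, the original Helton--Nie argument is more direct and avoids both Jensen and separation. One writes the Taylor remainder
\[
h(x)-h(u)-\nabla h(u)^{\!T}(x-u)=\int_0^1(1-t)\,(x-u)^{\!T}\nabla^2 h\bigl(u+t(x-u)\bigr)(x-u)\,dt,
\]
substitutes $\nabla^2 h=H^TH$, expands $\|H(u+t(x-u))(x-u)\|^2$ as a polynomial in $t$, and integrates termwise. The resulting quadratic form in the $x$-polynomial coefficients has a Hankel-type Gram matrix $\bigl(\int_0^1(1-t)t^{j+k}\,dt\bigr)_{j,k}$, which is positive semidefinite, so the whole expression is s.o.s. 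This gives the lemma with no appeal to nonnegativity or duality, and yields an explicit s.o.s decomposition.
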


\section{Conic reformulation of FSIPP}\label{sec::SR}

In this section, we first reformulate the FSIPP problem \eqref{FMP} to
a conic optimization problem and its Lagrangian dual, which involve
two convex subcones of $\RR[x]$ and $\RR[y]$, respectively. Under
suitable assumptions on
these cones, we show that approximate minimum and minimizers of
\eqref{FMP} can be obtained by solving the conic reformulations.

\subsection{Problem Reformulation}
Consider the problem
\begin{equation}\label{FMP2}
	\min_{x\in\K}\ \  f(x)-r^{\star}g(x).
\end{equation}
Note that, under {\sf (A1-2)}, \eqref{FMP2} is clearly a  convex
semi-infinite programming problem and its optimal value is $0$.

Now we consider the Lagrangian dual of \eqref{FMP2}. In particular,
the constraints $p(x,y)\le 0$ for all $y\in\Y$ means that for each
$u\in\K$, $p(u,y)\in\RR[y]$ belongs to the cone of nonpositive
polynomials on $\Y$. The polar of this cone is taken as the cone of
finite nonnegative measures supported on $\Y$, which we denote by
$\mathcal{M}(\Y)$.
Therefore, we can define the Lagrangian function $L_{f,g}(x,\mu,\eta):
\RR^m\times\mathcal{M}(\Y)\times\RR_+^s \rightarrow \RR$ by 
\begin{equation}\label{eq::lag}
		L_{f,g}(x,\mu,\eta)=f(x)-r^{\star}g(x)+\int_{\Y}
		p(x,y)d\mu(y)+\sum_{j=1}^s\eta_j\psi_j(x).
\end{equation}
Then, the Lagrangian dual of \eqref{FMP2} reads
\begin{equation}\label{eq::dcsip}
	\max_{\mu\in\mathcal{M}(\Y),\eta_j\ge 0} \inf_{x\in\RR^m}
	L_{f,g}(x,\mu,\eta). 
\end{equation}
See \cite{HaarDual,Hettich1993,Still2007,ShapiroSIP} for more details.

Consider the strong duality and dual attainment for the dual pair
\eqref{FMP2} and \eqref{eq::dcsip}:
\begin{quote}
	{\sf (A3):}
		$\exists\mu^{\star}\in\mathcal{M}(\Y)$ and $\eta^{\star}\in\RR_+^s$ such
		that $\inf_{x\in\RR^m} L_{f,g}(x,\mu^{\star},\eta^{\star})=0$.
\end{quote}
The following straightforward result is essential for the SDP
relaxations of \eqref{FMP}
in Section \ref{sec::SDP4}.

\begin{proposition}\label{prop::red}
	Under {\sf (A1-3)}$,$
	$L_{f,g}(u^{\star},\mu^{\star},\eta^{\star})=0$ and $\nabla_x
	L_{f,g}(u^{\star},\mu^{\star},\eta^{\star})=0$ for any $u^{\star}\in\bs$.
\end{proposition}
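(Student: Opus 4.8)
The plan is to exploit the fact that, under {\sf (A1--2)}, problem \eqref{FMP2} is a convex semi-infinite program whose optimal value is exactly $0$, and that {\sf (A3)} furnishes a dual feasible pair $(\mu^{\star},\eta^{\star})$ achieving the same value $0$. The key observation is that for every $u^{\star}\in\bs$ we simultaneously have (a) $u^{\star}\in\K$, so $p(u^{\star},y)\le 0$ for all $y\in\Y$, $\psi_j(u^{\star})\le 0$ for all $j$, and $f(u^{\star})-r^{\star}g(u^{\star})=0$ since the optimal value of \eqref{FMP2} is $0$; and (b) by {\sf (A3)}, $\inf_{x\in\RR^m}L_{f,g}(x,\mu^{\star},\eta^{\star})=0$, hence in particular $L_{f,g}(u^{\star},\mu^{\star},\eta^{\star})\ge 0$.

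First I would evaluate $L_{f,g}(u^{\star},\mu^{\star},\eta^{\star})$ directly from \eqref{eq::lag}: it equals
\[
  f(u^{\star})-r^{\star}g(u^{\star})+\int_{\Y}p(u^{\star},y)\,d\mu^{\star}(y)+\sum_{j=1}^s\eta^{\star}_j\psi_j(u^{\star}).
\]
The first term is $0$ by optimality in \eqref{FMP2}. Since $\mu^{\star}$ is a nonnegative measure on $\Y$ and $p(u^{\star},\cdot)\le 0$ on $\Y$, the integral term is $\le 0$. Since each $\eta^{\star}_j\ge 0$ and $\psi_j(u^{\star})\le 0$, the last sum is $\le 0$. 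Therefore $L_{f,g}(u^{\star},\mu^{\star},\eta^{\star})\le 0$. Combined with the reverse inequality from {\sf (A3)}, we get $L_{f,g}(u^{\star},\mu^{\star},\eta^{\star})=0$, which also forces $\int_{\Y}p(u^{\star},y)\,d\mu^{\star}(y)=0$ and $\sum_{j=1}^s\eta^{\star}_j\psi_j(u^{\star})=0$ (complementary slackness), though only the first equality is needed for the statement.

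For the gradient claim, I would argue via convexity. By {\sf (A1)}, the map $x\mapsto L_{f,g}(x,\mu^{\star},\eta^{\star})$ is convex on $\RR^m$: $f-r^{\star}g$ is convex (note $r^{\star}\ge 0$ under the first alternative of {\sf (A2)}, and $g$ affine under the second, so $-r^{\star}g$ is convex in either case), $\int_{\Y}p(\cdot,y)\,d\mu^{\star}(y)$ is an integral of convex functions against a nonnegative measure hence convex, and $\sum_j\eta^{\star}_j\psi_j$ is a nonnegative combination of convex functions. By {\sf (A3)} this convex function attains its infimum $0$ over $\RR^m$, and by the first part it attains that infimum at the point $u^{\star}$. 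A differentiable convex function that attains its global minimum at $u^{\star}$ must have vanishing gradient there, so $\nabla_x L_{f,g}(u^{\star},\mu^{\star},\eta^{\star})=0$. The only mild technical point is differentiability of the integral term in $x$ and the interchange of $\nabla_x$ with $\int_\Y$; this is routine since $\Y$ is compact and $p$ is a polynomial, so $\nabla_x p(x,y)$ is jointly continuous and uniformly bounded on bounded $x$-sets, legitimizing differentiation under the integral sign (dominated convergence). I expect this interchange, together with carefully invoking the right alternative of {\sf (A2)} to secure convexity of $-r^{\star}g$, to be the only places needing any care; the rest is immediate.
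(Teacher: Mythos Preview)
Your argument is correct and is precisely the standard one: the paper in fact gives no proof of this proposition, labeling it ``straightforward,'' so there is nothing to compare against. Your handling of the two alternatives in {\sf (A2)} to secure convexity of $-r^{\star}g$, and your remark that $\int_{\Y}p(x,y)\,d\mu^{\star}(y)$ is actually a polynomial in $x$ (making differentiability and interchange of $\nabla_x$ with the integral trivial), are exactly the small points one would check.
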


Under {\sf (A1-3)}, we first reformulate \eqref{FMP} to a conic
optimization problem with the
same optimal value $r^{\star}$. We need the following notation.
For a subset $\mathcal{X}\subset\RR^m$ (resp., the index set $\Y$),
denote by $\mathscr{P}(\mathcal{X})\subset\RR[x]$ (resp.
$\mathscr{P}(\Y)\subset\RR[y]$) the cone of
nonnegative polynomials on $\mathcal{X}$ (resp., $\Y$).
For $\mL\in(\RR[x])^*$ (resp., $\mH\in(\RR[y])^*$), denote by
$\mL(p(x,y))$ (resp., $\mH(p(x,y))$) the image of $\mL$ (resp., $\mH$)
on $p(x,y)$ regarded as an element in $\RR[x]$ (resp., $\RR[y]$) with
coefficients in $\RR[y]$ (resp., $\RR[x]$), i.e.,
$\mL(p(x,y))\in\RR[y]$ (resp., $\mH(p(x,y)))\in\RR[x]$).
For a subset $\mathcal{X}\subset\RR^m$, consider the conic
optimization problem
\begin{equation}\label{eq::form1}
\left\{
	\begin{aligned}
		\td{r}:=\sup_{\rho,\mH,\eta}\
		&\rho&\\ \text{s.t.}\
		&f(\bX)-\rho
		g(x)+\mH(p(\bX,\by))+\sum_{j=1}^s\eta_j\psi_j(x)\in\mathscr{P}(\mathcal{X}),
		&\\
		&\rho\in\RR,\ \mH\in(\mathscr{P}(\Y))^*,\ \eta\in\RR_+^s.&
	\end{aligned}\right.
\end{equation}
\begin{proposition}\label{prop::eq}
Under {\sf (A1-3)}, suppose that $\mathcal{X}\cap\bs\neq\emptyset$,
then $\td{r}=r^{\star}$. 
\end{proposition}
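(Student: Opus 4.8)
The plan is to show the two inequalities $\td r \le r^\star$ and $\td r \ge r^\star$ separately. For $\td r \le r^\star$, I would take any feasible triple $(\rho,\mH,\eta)$ for \eqref{eq::form1} and evaluate the polynomial in the constraint at a point $u^\star \in \mathcal{X}\cap\bs$. Since $\mH \in (\mathscr{P}(\Y))^*$ is a nonnegative functional and $p(u^\star,\cdot)\le 0$ on $\Y$ (as $u^\star\in\K$), we get $\mH(p(u^\star,y)) \le 0$; similarly each $\eta_j\psi_j(u^\star)\le 0$. The polynomial being in $\mathscr{P}(\mathcal{X})$ forces it to be $\ge 0$ at $u^\star$, so $f(u^\star) - \rho\, g(u^\star) \ge 0$. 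Since $g(u^\star) > 0$ by {\sf (A2)} and $f(u^\star)/g(u^\star) = r^\star$, this gives $\rho \le r^\star$. Taking the supremum over feasible $\rho$ yields $\td r \le r^\star$.

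For the reverse inequality $\td r \ge r^\star$, I would use {\sf (A3)}: there exist $\mu^\star \in \mathcal{M}(\Y)$ and $\eta^\star \in \RR_+^s$ with $\inf_{x\in\RR^m} L_{f,g}(x,\mu^\star,\eta^\star) = 0$. The measure $\mu^\star$ induces a functional $\mH^\star \in (\mathscr{P}(\Y))^*$ via $\mH^\star(q) = \int_\Y q\, d\mu^\star$ for $q\in\RR[y]$, which is nonnegative on $\mathscr{P}(\Y)$, and for which $\mH^\star(p(x,y)) = \int_\Y p(x,y)\, d\mu^\star(y)$. Then the constraint polynomial in \eqref{eq::form1} with $\rho = r^\star$, $\mH = \mH^\star$, $\eta = \eta^\star$ is exactly $L_{f,g}(x,\mu^\star,\eta^\star)$, which by {\sf (A3)} is nonnegative on all of $\RR^m$, hence a fortiori nonnegative on $\mathcal{X}$, i.e., it lies in $\mathscr{P}(\mathcal{X})$. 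Thus $(r^\star,\mH^\star,\eta^\star)$ is feasible for \eqref{eq::form1}, giving $\td r \ge r^\star$.

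I do not expect a serious obstacle here; the statement is essentially a bookkeeping exercise combining the definition of the Lagrangian dual, the polarity between nonpositive polynomials on $\Y$ and nonnegative measures on $\Y$, and assumption {\sf (A3)}. The one point deserving a little care is the identification of a nonnegative measure $\mu^\star\in\mathcal{M}(\Y)$ with an element of $(\mathscr{P}(\Y))^*$ and the commutation $\mH^\star(p(x,y)) = \int_\Y p(x,\cdot)\,d\mu^\star$, which is justified by applying the functional monomial-by-monomial in $y$ and using linearity (or Fubini for the integral representation); the hypothesis $\mathcal{X}\cap\bs\neq\emptyset$ is used only in the $\le$ direction to have a point at which all the dual terms have the right sign and $r^\star$ is attained.
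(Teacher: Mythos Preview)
Your proof is correct and follows essentially the same approach as the paper: both directions are handled exactly as you describe, with the paper defining $\mH^\star$ via $\mH^\star(y^\beta)=\int_\Y y^\beta\,d\mu^\star$ for the feasibility of $(r^\star,\mH^\star,\eta^\star)$, and evaluating the constraint polynomial at a point $u^\star\in\mathcal{X}\cap\bs$ for the other inequality. Your write-up is in fact slightly more explicit than the paper's about why $\mH(p(u^\star,y))\le 0$ and why $g(u^\star)>0$, but the argument is the same.
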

\begin{proof}
Under {\sf (A3)}, define $\mH^{\star}\in(\RR[y])^*$ by letting
$\mH^{\star}(y^\beta)=\int_{\Y} y^\beta d\mu^{\star}(\by)$ for any
$\beta\in\N^n$. Then, $\mH^{\star}\in(\mathscr{P}(\Y))^*$ and 
$(r^{\star},\mH^{\star},\eta^{\star})$ is feasible to
\eqref{eq::form1}. Hence, $\td{r}\ge r^{\star}$. As
$\mathcal{X}\cap\bs\neq\emptyset$, for any
$u^{\star}\in\mathcal{X}\cap\bs$ and any
$(\rho,\mH,\eta)$ feasible to \eqref{eq::form1}, it holds that
\[
	f(u^{\star})-\rho
	g(u^{\star})+\mH(p(u^{\star},\by))+\sum_{j=1}^s\eta_j\psi_j(u^{\star})\ge
	0.
\]
Then, the feasibility of $u^{\star}$ to \eqref{FMP} implies that
$r^{\star}=\frac{f(u^{\star})}{g(u^{\star})}\ge\rho$ and thus
$r^{\star}\ge \td{r}$. 
\end{proof}

\begin{remark}\label{rk::eq}{\rm
Before moving forward, let us give a brief overview on the strategies
we adopted in this paper to construct SDP relaxations of
\eqref{FMP} from the reformulation
\eqref{eq::form1}. The difficulty of \eqref{eq::form1} is that the
exact representions of the convex cones $\mathscr{P}(\mathcal{X})$ and
$\mathscr{P}(\Y)$ are usually not available in general case, which makes
\eqref{eq::form1} still intractable. However, the Positivstellensatz from
real algebraic geometry, which provides algebraic certificates for
positivity (nonnegative) of polynomials on semialgebraic sets, can
gives us approximations of $\mathscr{P}(\mathcal{X})$ (with a
carefully chosen $\mathcal{X}$) and $\mathscr{P}(\Y)$
with sum-of-squares structures. Replacing $\mathscr{P}(\mathcal{X})$
and $\mathscr{P}(\Y)$ by these approximations, we can convert
\eqref{eq::form1} into SDP relaxations of \eqref{FMP}. In order to
obtain (approximate) optimum and optimizers of \eqref{FMP} from the
resulting SDP relaxations, we need establish some conditions which
should be satisfied by these approximations of
$\mathscr{P}(\mathcal{X})$ and $\mathscr{P}(\Y)$. 
Then according to these conditions, we can choose suitable subset
$\mathcal{X}$ and construct appropriate approximations
$\mathscr{P}(\mathcal{X})$ and $\mathscr{P}(\Y)$. Remark that
different subsets
$\mathcal{X}$ and versions of Positivstellensatz can lead to different
approximations of $\mathscr{P}(\mathcal{X})$ and $\mathscr{P}(\Y)$
satisfying the established conditions. Therefore, to present our approach in
a unified way, we next use the symbols $\cs$ and $\cy$ to denote
approximations of $\mathscr{P}(\mathcal{X})$ and $\mathscr{P}(\Y)$,
respectively, and derive the conditions they should satisfy (Theorem
\ref{th::main}). Then,
we specify suitable $\cs$ and $\cy$ in different situations to construct
concrete SDP relaxations of \eqref{FMP} in Section \ref{sec::SDP} and
\ref{sec::SDP4}. 
\qed
}
\end{remark}

Let $\cs$ (resp., $\cy$) be a convex cone in $\RR[x]$ (resp.,
$\RR[y]$). Replacing $\mathscr{P}(\mathcal{X})$ and $\mathscr{P}(\Y)$
by $\cs$ and $\cy$ in \eqref{eq::form1}, respectively, we get
the conic optimization problem
\begin{equation}\label{eq::f*r}
\left\{
	\begin{aligned}
		r^{\psdp}:=\sup_{\rho,\mH,\eta}\
		&\rho&\\
		\text{s.t.}\
		&f(\bX)-\rho
		g(x)+\mH(p(\bX,\by))+\sum_{j=1}^s\eta_j\psi_j(x)\in\cs,
		&\\
		&\rho\in\RR,\ \mH\in(\cy)^*,\ \eta\in\RR_+^s,&
	\end{aligned}\right.
\end{equation}
and its Lagrangian dual
\begin{equation}\label{eq::f*rdual}
\left\{
	\begin{aligned}
		r^{\dsdp}:=\inf_{\mathscr{L}}\
		&\mathscr{L}(f)&\\
		\text{s.t.}\
		& \mathscr{L}\in(\cs)^*,\ \mathscr{L}(g)=1,&\\
		& -\mL(p(x,y))\in\cy,\ \mL(\psi_j)\le 0,\
		j=1,\ldots,s.&
	\end{aligned}\right.
\end{equation}

For simplicity, in what follows, we adopt the notation
\[
	\mL(x):=(\mL(x_1),\ldots,\mL(x_m))
\]
for any $\mL\in(\RR[x])^*$.  Let
\begin{equation}\label{eq::d}
	\mathbf{d}:=\max\{\deg(f), \deg(g), \deg(\psi_1), \ldots,
		\deg(\psi_s), \deg_{\bX}(p(\bX,\bY))\}.
\end{equation}
For any $\varepsilon\ge 0$, denote the set of
$\varepsilon$-optimal solutions of \eqref{FMP}
\begin{equation}\label{eq::Se}
	\bs_{\varepsilon}:=\left\{x\in\K\ \Big|\ \frac{f(x)}{g(x)}\le
	r^{\star}+\varepsilon\right\}.
\end{equation}

The following results show that if the cones $\cs$ and $\cy$ satisfy
certain conditions, we can approximate $r^{\star}$ and extract an
$\varepsilon$-optimal solution by solving \eqref{eq::f*r} and
\eqref{eq::f*rdual}.
\begin{theorem}\label{th::main}
	Suppose that {\sf (A1-2)} hold and $\cy\subseteq\mathscr{P}(\Y)$. 
	For some $\varepsilon\ge 0,$ suppose that 
	there exists some $u^{(\varepsilon)}\in\bs_\varepsilon$ such that
	$-p(u^{(\varepsilon)},y)\in\cy$ and $h(u^{(\varepsilon)})\ge 0$
	for any $h(x)\in\cs$.
	\begin{enumerate}[\upshape (i)]
		\item If {\sf (A3)} holds and $L_{f,g}(x,\mu^{\star},\eta^{\star})+\varepsilon
			g(x)\in\cs$, 
			then $r^{\star}-\varepsilon\le r^{\psdp}\le
			r^{\dsdp}\le r^{\star}+\varepsilon$.
\item If $\mathscr{L}^{\star}$ is a minimizer of \eqref{eq::f*rdual}
	such that the restriction $\mL^{\star}|_{\RR[x]_{\mathbf{d}}}$
	admits a representing nonnegative measure $\nu,$ then $r^{\star}\le
	r^{\dsdp}\le r^{\star}+\varepsilon$ and  
	\[
		\frac{\mathscr{L}^{\star}(\bX)}{\mathscr{L}^{\star}(1)}=
		\frac{1}{\int d\nu}\left(\int x_1 d\nu,\ldots,\int x_m
		d\nu\right)\in \bs_{\varepsilon}. 
	\]
	\end{enumerate}
\end{theorem}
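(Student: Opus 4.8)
\textit{Proof plan.} The plan is to prove both items by producing explicit feasible points of the conic programs \eqref{eq::f*r} and \eqref{eq::f*rdual}, and, for item (ii), by a barycenter (Jensen) argument on the representing measure. Two book-keeping facts underlie everything: every polynomial to which a functional from $(\cs)^*$ or $(\cy)^*$ gets applied below has degree at most $\mathbf{d}$ (see \eqref{eq::d}); and for $\mathscr{L}\in(\cs)^*$, $\mH\in(\cy)^*$ one has the commutation identity $\mathscr{L}(\mH(p(x,y)))=\mH(\mathscr{L}(p(x,y)))$, both sides equalling $\sum_{\beta,\alpha}p_{\beta\alpha}\mathscr{L}(x^\beta)\mH(y^\alpha)$ when $p(x,y)=\sum_{\beta,\alpha}p_{\beta\alpha}x^\beta y^\alpha$.

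\smallskip

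\emph{Item (i).} I would first establish weak duality $r^{\psdp}\le r^{\dsdp}$: given $(\rho,\mH,\eta)$ feasible for \eqref{eq::f*r} and $\mathscr{L}$ feasible for \eqref{eq::f*rdual}, apply $\mathscr{L}$ to the polynomial defining the conic constraint of \eqref{eq::f*r}; since that polynomial is in $\cs$ the value is $\ge 0$, and expanding it by linearity and using $\mathscr{L}(g)=1$, $\eta_j\ge 0$, $\mathscr{L}(\psi_j)\le 0$, the commutation identity, and $\mH(-\mathscr{L}(p(x,y)))\ge 0$ (because $-\mathscr{L}(p(x,y))\in\cy$ and $\mH\in(\cy)^*$), we get $\rho\le\mathscr{L}(f)$; then take the supremum over primal and the infimum over dual feasible points. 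Next, the lower bound $r^{\psdp}\ge r^{\star}-\varepsilon$: under {\sf (A3)}, define $\mH^{\star}$ by $\mH^{\star}(y^\beta):=\int_{\Y}y^\beta\,\ud\mu^{\star}(y)$; then $\mH^{\star}\in(\cy)^*$ since $\mu^{\star}\ge 0$ is carried by $\Y$ and $\cy\subseteq\mathscr{P}(\Y)$, and $(r^{\star}-\varepsilon,\mH^{\star},\eta^{\star})$ is feasible for \eqref{eq::f*r} because its defining polynomial equals $L_{f,g}(x,\mu^{\star},\eta^{\star})+\varepsilon g(x)$, which lies in $\cs$ by the hypothesis of (i). Finally, the upper bound $r^{\dsdp}\le r^{\star}+\varepsilon$: take $\mathscr{L}(q):=q(u^{(\varepsilon)})/g(u^{(\varepsilon)})$, which is well defined and feasible for \eqref{eq::f*rdual} — it is $\ge 0$ on $\cs$ since $h(u^{(\varepsilon)})\ge 0$ for $h\in\cs$ and $g(u^{(\varepsilon)})>0$ (as $u^{(\varepsilon)}\in\K$), it sends $g$ to $1$, it sends $-p(x,y)$ to $-p(u^{(\varepsilon)},y)/g(u^{(\varepsilon)})\in\cy$ ($\cy$ being a cone), and $\mathscr{L}(\psi_j)=\psi_j(u^{(\varepsilon)})/g(u^{(\varepsilon)})\le 0$ — with value $f(u^{(\varepsilon)})/g(u^{(\varepsilon)})\le r^{\star}+\varepsilon$. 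Chaining the three inequalities gives (i).

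\smallskip

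\emph{Item (ii).} The bound $r^{\dsdp}\le r^{\star}+\varepsilon$ is exactly the evaluation-functional argument above and uses only $u^{(\varepsilon)}$, not {\sf (A3)}. For the rest, put $\lambda:=\mathscr{L}^{\star}(1)=\int\ud\nu$; since $\deg g\le\mathbf{d}$ we get $\int g\,\ud\nu=\mathscr{L}^{\star}(g)=1\ne 0$, so $\nu\ne 0$, $\lambda>0$, and $x^{*}:=\mathscr{L}^{\star}(x)/\lambda$ is the barycenter of the probability measure $\nu/\lambda$. Because $\nu$ is a genuine nonnegative measure, the classical Jensen inequality applies to every convex polynomial of degree $\le\mathbf{d}$ (no s.o.s-convexity as in Proposition \ref{prop::Jensen} is needed). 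It gives $\psi_j(x^{*})\le\mathscr{L}^{\star}(\psi_j)/\lambda\le 0$, and, applied for each fixed $y\in\Y$ to the convex $p(\cdot,y)$ together with the fact that $-\mathscr{L}^{\star}(p(x,y))\in\cy\subseteq\mathscr{P}(\Y)$ is nonnegative on $\Y$, it gives $p(x^{*},y)\le 0$; hence $x^{*}\in\K$. In both cases of {\sf (A2)} the polynomial $\phi:=f-r^{\star}g$ is convex ($r^{\star}\ge 0$ in the first case since $f\ge 0$, $g>0$ on the nonempty $\K$; $-g$ affine in the second), has degree $\le\mathbf{d}$, and is $\ge 0$ on $\K$ by \eqref{FMP2}, so $0\le\phi(x^{*})\le\mathscr{L}^{\star}(\phi)/\lambda=(\mathscr{L}^{\star}(f)-r^{\star})/\lambda$; thus $r^{\dsdp}=\mathscr{L}^{\star}(f)\ge r^{\star}$, and with the upper bound $r^{\star}\le r^{\dsdp}\le r^{\star}+\varepsilon$. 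Then $\phi(x^{*})\le(\mathscr{L}^{\star}(f)-r^{\star})/\lambda\le\varepsilon/\lambda$, while Jensen on the convex $-g$ gives $g(x^{*})\ge\mathscr{L}^{\star}(g)/\lambda=1/\lambda$, so $f(x^{*})-r^{\star}g(x^{*})\le\varepsilon/\lambda\le\varepsilon\,g(x^{*})$; dividing by $g(x^{*})>0$ yields $f(x^{*})/g(x^{*})\le r^{\star}+\varepsilon$, i.e.\ $x^{*}\in\bs_{\varepsilon}$, which is the displayed identity.

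\smallskip

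I expect no step to be a genuine obstacle; the two points that need a little care are the degree book-keeping — so that applying $\mathscr{L}^{\star}$, resp.\ $\mH$, termwise coincides with integration against $\nu$, resp.\ $\mu^{\star}$, which is handled by comparison with $\mathbf{d}$ from \eqref{eq::d} — and the use of Jensen's inequality for the possibly non-compactly-supported measure $\nu$, which is legitimate because a convex polynomial lies above one of its supporting affine functions and $\nu$ has finite first moments, so the supporting-hyperplane proof of Jensen applies verbatim. The case split in {\sf (A2)} enters only to guarantee convexity of $f-r^{\star}g$ and positivity of $g$ on $\K$.
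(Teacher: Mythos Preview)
Your proof is correct and follows essentially the same route as the paper's: the evaluation functional at $u^{(\varepsilon)}$ gives the upper bound $r^{\dsdp}\le r^{\star}+\varepsilon$, the moment functional of $\mu^{\star}$ gives the primal feasibility of $(r^{\star}-\varepsilon,\mH^{\star},\eta^{\star})$, and the barycenter-plus-Jensen argument on $\nu$ establishes feasibility and $\varepsilon$-optimality of $\mathscr{L}^{\star}(x)/\mathscr{L}^{\star}(1)$ in part (ii). The one cosmetic difference is the final inequality in (ii): the paper bounds the ratio $\frac{\int f\,d\nu/\int d\nu}{\int g\,d\nu/\int d\nu}\ge \frac{f(x^{*})}{g(x^{*})}$ directly (deferring the case split in {\sf (A2)} to ``easily verified''), whereas you first argue that $\phi=f-r^{\star}g$ is convex under {\sf (A2)} and then apply Jensen once to $\phi$ together with $g(x^{*})\ge 1/\lambda$; both arguments are equivalent and your packaging makes the role of {\sf (A2)} a bit more explicit.
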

\begin{proof}
	Define a linear functional $\mL'\in(\RR[x])^*$ such that
$\mL'(x^\alpha)=\frac{(u^{\varepsilon})^\alpha}{g(u^{\varepsilon})}$ for each $\alpha\in\N^m$.
By the assumption, it is clear that $\mathscr{L}'$ is feasible to
\eqref{eq::f*rdual}. Then, $r^{\dsdp}\le
\mathscr{L}'(f)=\frac{f(u^{\varepsilon})}{g(u^{\varepsilon})}\le
 r^{\star}+\varepsilon$. 

 (i)	Define $\mH^{\star}\in(\RR[y])^*$ by letting
 $\mH^{\star}(y^\beta)=\int_{\Y} y^\beta d\mu^{\star}(\by)$ for any
 $\beta\in\N^n$. By the assumption, $\mH^{\star}\in(\cy)^*$. 
Since $L_{f,g}(x,\mu^{\star},\eta^{\star})+\varepsilon g(x)\in\cs$,
$(r^{\star}-\varepsilon,\mH^{\star},\eta^{\star})$ is feasible to
\eqref{eq::f*r}. Hence, $r^{\psdp}\ge r^{\star}-\varepsilon$.
Then, the weak duality implies the conclusion.

(ii) As $\mL^{\star}|_{\RR[x]_{\mathbf{d}}}$
admits a representing nonnegative measure $\nu$, we have $\mL^{\star}(1)=\int
d\nu>0$; otherwise $\mL^{\star}(g)=0$, a contradiction. 
For every $\by\in \Y$, as $p(\bX,y)$ is convex in $\bX$ and
$\frac{\nu}{\int d\nu}$ is a probability measure, by
Jensen's inequality, we have
\begin{equation}\label{eq::J1}
	p\left(\frac{\mathscr{L}^{\star}(\bX)}{\mathscr{L}^{\star}(1)},\by\right)
	\le
	\frac{1}{\int d\nu}\int p(x,\by)d\nu(x)
	=\frac{1}{\mL^{\star}(1)}\mathscr{L}^{\star}(p(\bX,\by))\le
	0,
\end{equation}
where the last inequality follows from the constraint of the Lagrangian dual problem \eqref{eq::f*rdual}.
For the same reason,
\begin{equation}\label{eq::J2}
	\psi_{j}\left(\frac{\mathscr{L}^{\star}(\bX)}{\mathscr{L}^{\star}(1)}\right)\le
	0,\ \ j=1,\ldots,s,
\end{equation}
which implies that $\mathscr{L}^{\star}(\bX)/\mathscr{L}^{\star}(1)$ is feasible
to \eqref{FMP}. Therefore,
it holds that
\[
	\begin{aligned}
		r^{\star}+\varepsilon&\ge
		\mathscr{L}^{\star}(f)=\frac{\mathscr{L}^{\star}(f)}{\mathscr{L}^{\star}(g)}
		=\frac{\int f(x)d\nu}{\int g(x)d\nu}
		=\frac{\frac{1}{\int d\nu}\int f(x)d\nu}{\frac{1}{\int d\nu}\int g(x)d\nu}\\
		&\ge
		\frac{f\left(\mathscr{L}^{\star}(\bX)/\mathscr{L}^{\star}(1)\right)}
		{g(\mathscr{L}^{\star}(\bX)/\mathscr{L}^{\star}(1))}\ge
		r^{\star}.\\
	\end{aligned}
\]
In particular, the second inequality above can be easily verified
under {\sf (A2)}. Therefore, we have $r^{\star}\le
	r^{\dsdp}\le r^{\star}+\varepsilon$ and  
$\mathscr{L}^{\star}(\bX)/\mathscr{L}^{\star}(1)\in\bs_{\varepsilon}$. 
\end{proof}

Theorem \ref{th::main} opens the possibilities of constructing SDP
relaxations of \eqref{FMP}. In fact, if we can find suitable cones
$\cs$ and $\cy$ with sum-of-squares structures and satisfy the
conditions in Theorem \ref{th::mainOP}, then \eqref{eq::f*r} and
\eqref{eq::f*rdual} can be reduced to SDP problems and become
tractable; see Sections \ref{sec::SDP} and \ref{sec::SDP4} for details.

\begin{remark}{\rm
		We would like to emphasize that the Slater condition
	used in \cite{GS2020} to guarantee {\sf (A3)} and the 
	convergence of the SDP relaxations proposed therein might fail for
	some applications (see Remark \ref{rk::slaterfails} (i)). Thus, we
	need a weaker constraint qualification for {\sf (A3)}.\qed
}\end{remark}

\subsection{A Constraint Qualification for {\sf (A3)}}

Inspired by Jeyakumar and Li~\cite{Jeyakumar2010}, we consider the
following {\itshape semi-infinite characteristic cone constraint
qualification} (SCCCQ).

For a function $h : \RR^m \rightarrow \RR\cup\{-\infty, +\infty\}$,
denote by $h^*$ the conjugate function of $h$, i.e.,
\[
	h^*(\xi)=\sup_{x\in\RR^m} \{\langle \xi, x\rangle -
	h(x)\},
\]
and by $\epi\ h^*$ the epigraph of $h^*$.  Let
\begin{equation}\label{eq::cones}
	\mathcal{C}_1:=\bigcup_{\mu\in\mathcal{M}(\Y)}\epi\left(\int_{\Y}
	p(\cdot,y)d\mu(y)\right)^*\ \text{and}\
	\mathcal{C}_2:=\bigcup_{\eta\in\RR_+^s}\epi\left(\sum_{j=1}^s\eta_j\psi_j\right)^*.
\end{equation}

\begin{definition}\label{def::scccq}
{\rm SCCCQ} is said to be held for $\K$ if $\mathcal{C}_1+\mathcal{C}_2$ is
closed.
\end{definition}

\begin{remark}\label{rk::2cq}{\rm
	Along with Proposition \ref{prop::cq2}, the following example
	shows that the SCCCQ condition is {\itshape weaker} than the
	Slater condition.  Recall that the Slater condition holds
	for $\K$ if there exists $u\in\RR^m$ such that
	$p(u,y)<0$ for all $y\in \Y$ and $\psi_j(u)<0$ for all
	$j=1,\ldots,s.$
	Consider the set $\K=\{x\in\RR\mid yx\le 0,\
	\forall y\in[-1,1]\}$. Clearly, $\K=\{0\}$ and the Slater
	condition fails. As $s=0$, we only need verify that $\mathcal{C}_1$ is
	closed. It suffices to show that
	$\mathcal{C}_1=\{(w,v)\in\RR^2\mid v\ge 0\}$.
	Indeed, fix a $\mu\in\mathcal{M}([-1,1])$ and a point
	$(w,v)\in\epi\left(\int_{\Y}
	p(\cdot,y)d\mu(y)\right)^*\in\mathcal{C}_1$. Then,
	\begin{equation}\label{eq::wv}
		v\ge\sup_{x\in\RR} \left(wx-\int_{[-1,1]}
		xyd\mu(y)\right)
		=\left\{ \begin{array}{cc}
			0,& \ \text{if } w=\int_{[-1,1]} yd\mu(y)\\
			+\infty, & \ \text{if } w\neq\int_{[-1,1]} yd\mu(y).
		\end{array}\right.
	\end{equation}
Conversely, for any $(w,v)\in\RR^2$ with $v\ge 0$, let
\[
	\tilde{\mu}=\left\{
	\begin{aligned}
		|w|\delta_{\{-1\}},\quad\text{if}\ w<0, \\
		w\delta_{\{1\}},\quad\text{if}\ w\ge 0,
\end{aligned}\right.
\]
where $\delta_{\{-1\}}$ and $\delta_{\{1\}}$ are the Dirac measures
at $-1$ and $1$, respectively. Then, $\td{\mu}\in\mathcal{M}([-1,1])$
and $w=\int_{[-1,1]} yd\td{\mu}(y)$ holds. By \eqref{eq::wv}, we have
$(w,v)\in \epi\left(\int_{\Y} p(\cdot,y)d\td{\mu}(y)\right)^*$.\qed
}\end{remark}

For convex semi-infinite programming problems, we claim that the SCCCQ
guarantees the strong duality and the attachment of the solution in
the dual problem, see the next theorem. Due to its own interest, we give a proof in a
general setting in the Appendix~\ref{appendix}. 

\begin{theorem}\label{th::sd} Under {\sf (A1-2)}$,$ {\rm SCCCQ}
	implies {\sf (A3)}.
\end{theorem}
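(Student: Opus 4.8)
The plan is to recognize the dual pair \eqref{FMP2}--\eqref{eq::dcsip} as an instance of convex semi-infinite programming and invoke an epigraph-based duality theorem, whose hypothesis is exactly the closedness of a characteristic cone. Concretely, write \eqref{FMP2} as $\min\{F(x)\mid x\in\K\}$ with $F(x)=f(x)-r^\star g(x)$, which is convex by {\sf (A1-2)}, and recall that $\K=\{x\mid p(x,y)\le 0\ \forall y\in\Y,\ \psi_j(x)\le 0\ (j=1,\ldots,s)\}$. The key structural fact I would use is the standard epigraph description of the ``consequence cone'': under {\sf (A1)} (so that each $p(\cdot,y)$ and each $\psi_j$ is convex, and $\Y$ is compact), the set of $(\xi,\alpha)\in\RR^m\times\RR$ such that the inequality $\langle\xi,x\rangle-\alpha\le 0$ is a consequence of the system defining $\K$ coincides with $\cl(\mathcal{C}_1+\mathcal{C}_2)$, where $\mathcal{C}_1,\mathcal{C}_2$ are as in \eqref{eq::cones}; this is the semi-infinite Farkas-type lemma behind \cite{Jeyakumar2010}. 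The SCCCQ says precisely that $\mathcal{C}_1+\mathcal{C}_2$ is already closed, so no closure is needed.

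First I would set up the general lemma (to be proved in Appendix~\ref{appendix}): for a convex function $\phi$ bounded below on $\K$ with infimum $0$, one has $0\le\phi(x)$ for all $x\in\K$, i.e. $(0,0)\in\epi\phi^*+\cl(\mathcal{C}_1+\mathcal{C}_2)$ after a translation argument; under SCCCQ the closure drops and $(0,0)\in\epi\phi^*+\mathcal{C}_1+\mathcal{C}_2$. Unwinding this membership, there exist $\mu^\star\in\mathcal{M}(\Y)$, $\eta^\star\in\RR_+^s$, and points in the respective epigraphs that sum to $(0,0)$; translating back, this yields
\[
  \phi(x)+\int_\Y p(x,y)\,d\mu^\star(y)+\sum_{j=1}^s\eta^\star_j\psi_j(x)\ge 0\quad\text{for all }x\in\RR^m,
\]
with the reverse inequality $\le 0$ holding at any minimizer of \eqref{FMP2} by complementary slackness (the minimizer lies in $\K$, so the added terms are $\le 0$). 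Hence $\inf_{x\in\RR^m}L_{f,g}(x,\mu^\star,\eta^\star)=0$, which is exactly {\sf (A3)}. Applying this with $\phi=F$ finishes the proof.

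The step I expect to be the main obstacle is establishing the epigraph identity for $\mathcal{C}_1+\mathcal{C}_2$ in the semi-infinite setting — in particular showing that the union over $\mu\in\mathcal{M}(\Y)$ of $\epi\bigl(\int_\Y p(\cdot,y)\,d\mu(y)\bigr)^*$ is itself a convex cone and that it captures every consequence of the family $\{p(\cdot,y)\le 0\}_{y\in\Y}$. Compactness of $\Y$ and continuity of $p$ in $y$ are what make $\int_\Y p(\cdot,y)\,d\mu(y)$ a well-defined convex polynomial for each finite measure $\mu$, and a routine approximation of arbitrary $\mu$ by finitely supported measures reduces the computation of the conjugate to the finite case; I would relegate these verifications, together with the translation/closure bookkeeping, to the Appendix as indicated, since they are standard convex-analytic manipulations once the right objects are in place.
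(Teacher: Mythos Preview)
Your proposal is correct and follows essentially the same route as the paper: the paper's proof defers to an appendix where it first shows $\mathcal{C}_1+\mathcal{C}_2$ is a convex cone, then proves the Farkas-type alternative $(0,0)\in\epi\,\overline{h}^*+\cl(\mathcal{C}_1+\mathcal{C}_2)$ for $\overline{h}=h-r^\star$, drops the closure via SCCCQ, and unwinds the epigraph membership to produce $\mu^\star,\eta^\star$ with the desired Lagrangian identity, concluding via weak duality. The only cosmetic difference is that the paper closes the argument with weak duality rather than evaluating at a minimizer, but both work here since $\bs\neq\emptyset$.
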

\begin{proof}
	See Theorem \ref{th::cq}. 
\end{proof}

\section{SDP relaxations with asymptotic convergence}\label{sec::SDP}
In this section, based on Theorem \ref{th::main}, we present an SDP
relaxation method for the FSIPP problem \eqref{FMP} with the index set
$\Y$ being of the form 
\[
	\Y=\{y\in\RR^n \mid q_1(y)\ge 0, \ldots, q_{\kappa}(y)\ge 0\},
	\quad\text{where } q_1, \ldots, q_{\kappa}\in\RR[y]. 
\]
The asymptotic convergence and convergence rate of the SDP relaxations
will be studied. 
We also present some discussions on the stop criterion for such SDP
relaxations.

By Theorem \ref{th::main}, if we can choose a suitable subset
$\mathcal{X}\subset\RR^m$ and construct appropriate approximations
$\cs$ and $\cy$ of
$\mathscr{P}(\mathcal{X})$ and $\mathscr{P}(\Y)$, respectively, which
satisfy the conditions in Theorem \ref{th::main} for some $\varepsilon
> 0$, then we can compute the $\varepsilon$-optimal value of
\eqref{FMP} by solving \eqref{eq::f*r} and \eqref{eq::f*rdual}.
Consequently, to construct an asymptotically convergent hierarchy of
SDP relaxations of \eqref{FMP}, we need find two sequences
$\{\csk{k}\}\subset\RR[x]$ and $\{\cyk{k}\}\subset\RR[y]$, which are
approximations of $\mathscr{P}(\mathcal{X})$ and $\mathscr{P}(\Y)$,
respectively, and have sum-of-squares structures. These sequences of
approximations should meet the requirement that for any
$\varepsilon>0$, there exists some $k_{\varepsilon}\in\N$ such that for any
$k\ge k_{\varepsilon}$, the conditions in Theorem \ref{th::main} will
hold if we replace the notation $\cs$ and $\cy$ by $\csk{k}$ and
$\cyk{k}$, respectively.
We may construct such sequences of
approximations by the Positivstellensatz from real algebraic geometry
(recall Putinar's Positivstellensatz introduced in Section
\ref{sec::pre}) where the subscript $k$ indicates the degree of
polynomials in the approximations and the containment relationship
$\csk{k}\subset\csk{k+1}$, $\cyk{k}\subset\cyk{k+1}$ is satisfied. 
In \eqref{eq::f*r} and \eqref{eq::f*rdual}, replace the notation $\cs$
and $\cy$ by $\csk{k}$ and $\cyk{k}$, respectively, and denote the
resulting problems by $(\ref{eq::f*r}k)$ and $(\ref{eq::f*rdual}k)$.
Then, as $k$ increases, a hierarchy of SDP relaxations of \eqref{FMP}
can be constructed. Denote by $r^{\psdp}_k$ and $r^{\dsdp}_k$ the optimal
values of $(\ref{eq::f*r}k)$ and $(\ref{eq::f*rdual}k),$ respectively.
The argument above is formally stated in the following theorem.

\begin{theorem}\label{th::main1}
	Suppose that {\sf (A1-3)} hold and
	$\cyk{k}\subseteq\mathscr{P}(\Y)$
	for all $k\in\N, k\ge\lceil\mathbf{d}/2\rceil$. For any small
	$\varepsilon>0,$ suppose that 
	there exist $k_{\varepsilon}\in\N,
	k_{\varepsilon}\ge\lceil\mathbf{d}/2\rceil$ and some
	$u^{(\varepsilon)}\in\bs_\varepsilon$ such that for all $k\ge
	k_{\varepsilon}$,
	$-p(u^{(\varepsilon)},y)\in\cyk{k}$, 
	$L_{f,g}(x,\mu^{\star},\eta^{\star})+\varepsilon g(x)\in\csk{k}$,
	and $h(u^{(\varepsilon)})\ge 0$ holds for any $h(x)\in\csk{k}$.
	Then$,$
	$\lim_{k\rightarrow\infty}r_k^{\psdp}=\lim_{k\rightarrow\infty}
	r_k^{\dsdp}=r^{\star}$.
\end{theorem}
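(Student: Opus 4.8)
The plan is to deduce Theorem~\ref{th::main1} from Theorem~\ref{th::main} by a standard sandwiching argument in $k$, together with the monotonicity of the relaxation hierarchies. First I would fix an arbitrary small $\varepsilon>0$ and invoke the hypotheses: by assumption there are $k_\varepsilon\ge\lceil\mathbf d/2\rceil$ and $u^{(\varepsilon)}\in\bs_\varepsilon$ such that, for every $k\ge k_\varepsilon$, we have $-p(u^{(\varepsilon)},y)\in\cyk k$, $L_{f,g}(x,\mu^\star,\eta^\star)+\varepsilon g(x)\in\csk k$, and $h(u^{(\varepsilon)})\ge0$ for all $h\in\csk k$; moreover $\cyk k\subseteq\mathscr P(\Y)$. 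For each such $k$, these are exactly the hypotheses of Theorem~\ref{th::main}(i) with $\cs=\csk k$ and $\cy=\cyk k$, so Theorem~\ref{th::main}(i) yields
\[
	r^\star-\varepsilon\le r_k^{\psdp}\le r_k^{\dsdp}\le r^\star+\varepsilon
	\qquad\text{for all }k\ge k_\varepsilon.
\]

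Next I would pass to the limit. Since $\csk k\subseteq\csk{k+1}$ and $\cyk k\subseteq\cyk{k+1}$, any triple $(\rho,\mH,\eta)$ feasible for $(\ref{eq::f*r}k)$ is feasible for $(\ref{eq::f*r}{k+1})$, so $\{r_k^{\psdp}\}$ is nondecreasing in $k$; similarly any $\mL$ feasible for $(\ref{eq::f*rdual}{k+1})$ is feasible for $(\ref{eq::f*rdual}k)$ (the constraints $-\mL(p(x,y))\in\cyk k$ and $\mL\in(\csk k)^*$ only relax as the cones grow), so $\{r_k^{\dsdp}\}$ is nonincreasing in $k$. Hence both limits $\lim_{k\to\infty}r_k^{\psdp}$ and $\lim_{k\to\infty}r_k^{\dsdp}$ exist in $[-\infty,+\infty]$, and the displayed inequalities (which hold eventually, for $k\ge k_\varepsilon$) force
\[
	r^\star-\varepsilon\ \le\ \lim_{k\to\infty}r_k^{\psdp}\ \le\ \lim_{k\to\infty}r_k^{\dsdp}\ \le\ r^\star+\varepsilon .
\]
Since $\varepsilon>0$ was arbitrary, letting $\varepsilon\downarrow0$ gives $\lim_{k\to\infty}r_k^{\psdp}=\lim_{k\to\infty}r_k^{\dsdp}=r^\star$, which is the claim.

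The only subtle point—and the step I would write out most carefully—is the monotonicity of the two sequences, since strictly speaking it is the containments $\csk k\subseteq\csk{k+1}$, $\cyk k\subseteq\cyk{k+1}$ (part of the standing construction of the approximating sequences, e.g.\ via Putinar's Positivstellensatz as described before the theorem) that make this work; one should note that the dual cones then satisfy the reverse inclusion $(\csk{k+1})^*\subseteq(\csk k)^*$, which is what gives the correct direction for $r_k^{\dsdp}$. Everything else is a direct citation of Theorem~\ref{th::main}(i) applied tail-wise in $k$ together with the elementary observation that a nondecreasing sequence bounded above by $r^\star+\varepsilon$ for all large $k$, and a nonincreasing sequence bounded below by $r^\star-\varepsilon$ for all large $k$, both converge into the interval $[r^\star-\varepsilon,r^\star+\varepsilon]$. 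I do not expect any genuine obstacle here; the content of the theorem is really packaged in Theorem~\ref{th::main} and in the hypothesis that the approximating cones eventually capture the required membership conditions for every $\varepsilon$.
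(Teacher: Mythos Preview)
Your core argument is exactly the paper's: apply Theorem~\ref{th::main}(i) with $\cs=\csk{k}$, $\cy=\cyk{k}$ for each $k\ge k_\varepsilon$ to obtain the sandwich $r^\star-\varepsilon\le r_k^{\psdp}\le r_k^{\dsdp}\le r^\star+\varepsilon$. That already \emph{is} the $\varepsilon$--$N$ definition of $\lim_k r_k^{\psdp}=\lim_k r_k^{\dsdp}=r^\star$; no separate existence-of-limit step is needed, and the paper's proof accordingly stops right there.

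The extra monotonicity discussion you added is both unnecessary and, for $r_k^{\dsdp}$, incorrect. You claim that any $\mL$ feasible for $(\ref{eq::f*rdual}{k+1})$ is feasible for $(\ref{eq::f*rdual}k)$, but the constraint $-\mL(p(x,y))\in\cyk{k}$ goes the wrong way: since $\cyk{k}\subseteq\cyk{k+1}$, membership in $\cyk{k+1}$ does \emph{not} imply membership in the smaller cone $\cyk{k}$. Thus the feasible sets of $(\ref{eq::f*rdual}k)$ are not nested in $k$ (the $(\csk{k})^*$ constraint tightens while the $\cyk{k}$ constraint loosens as $k$ grows), and $\{r_k^{\dsdp}\}$ need not be monotone. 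Simply drop that paragraph: the sandwich inequality for all $k\ge k_\varepsilon$, with $\varepsilon>0$ arbitrary, gives the convergence outright.
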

\begin{proof}
For any small $\varepsilon>0$, by Theorem \ref{th::main} (i), we have 
$r^{\star}-\varepsilon\le r^{\psdp}_k\le r^{\dsdp}_k\le
r^{\star}+\varepsilon$ for any $k\ge k_{\varepsilon}$ and hence the convergence
follows. 
\end{proof}

\subsection{SDP relaxations with asymptotic convergence}
In what follows, we will construct appropriate cones $\{\csk{k}\}$ and
$\{\cyk{k}\}$, which can satisfy conditions in Theorem \ref{th::main1}
and reduce $(\ref{eq::f*r}k)$ and $(\ref{eq::f*rdual}k)$ to SDP problems.  

Fix two numbers $R>0$ and $g^{\star}>0$
such that 
\begin{equation}\label{eq::rg}
	\Vert u^{\star}\Vert <R \quad \text{and}\quad g(u^{\star})>
	g^{\star} \quad \text{for some}\quad u^{\star}\in\bs.
\end{equation}
\begin{remark}{\rm
	Since $\bs\neq\emptyset$ and {\sf (A2)} holds, the above $R$ and
	$g^{\star}$ always exist. Let us discuss how to choose $R$ and
$g^{\star}$ in some circumstances. If $\K$ or $\bs$ is bounded, then
we can
choose sufficiently large $R$ such that $\K\subset[-R, R]^m$ or
$\bs\subset[-R, R]^m$. Now let us consider that how to choose a
sufficiently small $g^{\star}>0$ such that $g(u^{\star})>g^{\star}$
for some $u^{\star}\in\bs$, 
which may be not easy to certify in practice. If $g(x)\equiv 1$, then
clearly, we can let $g^{\star}=1/2$. 
If $g(x)$ is affine, then we can set $g^{\star}$ by solving
$\min_{x\in\K} g(x)$, 
which is an FSIPP problem in which the denominator in the objective is
one.
Suppose that $g(x)$ is not affine and a feasible point
$u'\in\K$ is known. We first solve the FSIPP problem
$f^{\star}:=\min_{x\in\K} f(x)$.
If $f(u')=0$ or $f^{\star}=0$, then by {\sf (A2)},
$r^{\star}=0$; otherwise, we have $f^{\star}>0$ and 
\[
	g(u^{\star})\ge \frac{g(u')}{f(u')}f(u^{\star})\ge
	\frac{g(u')}{f(u')}f^{\star},
\]
for any $u^{\star}\in\bs$. Thus, we can set
$g^{\star}$ to be a positive number less than
$\frac{g(u')}{f(u')}f^{\star}$. \qed
}
\end{remark}

We choose the subset 
\begin{equation}\label{eq::xset}
	\mathcal{X}:=\{x\in\RR^m \mid \Vert x\Vert^2\le R^2,\ g(x)\ge g^{\star}\}. 
\end{equation}
which clearly satisfies the condition
$\mathcal{X}\cap\bs\neq\emptyset$ in Proposition \ref{prop::eq} and let 
\[
	Q:=\{q_1,\ldots,q_{\kappa}\}\subset\RR[y],\quad G:=\{R^2-\Vert x\Vert^2, \
		g(x)-g^{\star}\}\subset\RR[x]. 
\]
For any $k\in\N, k\ge\lceil\mathbf{d}/2\rceil$, let 
\begin{equation}\label{eq::cy}
	\csk{k}=\qm_k(G)\ \ \text{and}\ \ \cyk{k}=\qm_k(Q),
\end{equation}
i.e., the $k$-th quadratic
modules generated by $G$ and $Q$ in $\RR[x]$ and $\RR[y]$,
respectively.  Then, for each $k\ge\lceil\mathbf{d}/2\rceil$,
computing $r^{\psdp}_k$ and $r^{\dsdp}_k$ is reduced to solving a pair
of primal and dual SDP problems. We omit the detail for simplicity.

Consider the assumption:
\begin{quote}
	{\sf (A4):}
$\qm(Q)$ is Archimedean and there
	exists a point $\bar{u}\in\K$ such that $p(\bar{u},y)<0$
	for all $y\in\Y$.
\end{quote}

\begin{theorem}\label{th::asy}
	Under {\sf (A1-4)} and the settings \eqref{eq::rg} and
	\eqref{eq::cy}$,$ the following holds.
	\begin{enumerate}[\upshape (i) ]
		\item
			$\lim_{k\rightarrow\infty}r_k^{\psdp}=\lim_{k\rightarrow\infty}
			r_k^{\dsdp}=r^{\star}$.
		\item If $r_k^{\dsdp}<+\infty$ which holds for $k$ large
			enough$,$ then $r_k^{\psdp}=r_k^{\dsdp}$ and $r_k^{\dsdp}$
			is attainable.
		\item For any convergent subsequence
			$\{\mL^{\star}_{k_i}(x)/\mL^{\star}_{k_i}(1)\}_i$
			$($always exists$)$ of
			$\{\mL^{\star}_k(x)/\mL^{\star}_k(1)\}_k$  where $\mL_k^{\star}$ is a minimizer of
			$(\ref{eq::f*rdual}k),$ we have
			$\lim_{i\rightarrow\infty}\mL^{\star}_{k_i}(x)/\mL^{\star}_{k_i}(1)\in\bs$.
			Consequently$,$ if $\bs$ is singleton$,$ then 
			$\lim_{k\rightarrow\infty}\mL^{\star}_k(x)/\mL^{\star}_k(1)$
			is the unique minimizer of \eqref{FMP}. 
	\end{enumerate}
\end{theorem}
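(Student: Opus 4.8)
The plan is to verify that the concrete cones chosen in \eqref{eq::cy} satisfy all the hypotheses of Theorem \ref{th::main1} (for part (i)) and of Theorem \ref{th::main}~(ii) together with Proposition \ref{prop::fc}-type arguments (for parts (ii) and (iii)), and then simply quote those results. First I would record the elementary inclusion $\cyk{k}=\qm_k(Q)\subseteq\mathscr{P}(\Y)$, which is immediate since every element of $\qm_k(Q)$ is nonnegative on $\Y$; this is one of the standing requirements. The set $\mathcal{X}$ in \eqref{eq::xset} is compact and, by the choice of $R$ and $g^{\star}$ in \eqref{eq::rg}, satisfies $\mathcal{X}\cap\bs\neq\emptyset$, so Proposition \ref{prop::eq} applies. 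I would also note that $\qm(G)$ is automatically Archimedean because $R^2-\Vert x\Vert^2\in G$, so Putinar's Positivstellensatz (Theorem \ref{th::PP}) is available on $\mathcal{X}$.

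For part (i), the core point is to produce, for each $\varepsilon>0$, an index $k_\varepsilon$ and a point $u^{(\varepsilon)}\in\bs_\varepsilon$ such that the three conditions of Theorem \ref{th::main1} hold for all $k\ge k_\varepsilon$. I would take $u^{(\varepsilon)}:=u^{\star}$, the minimizer from \eqref{eq::rg}, so that in fact $u^{(\varepsilon)}\in\bs\subseteq\bs_\varepsilon$. Then $h(u^{\star})\ge 0$ for every $h\in\csk{k}=\qm_k(G)$ holds trivially since $u^{\star}\in\mathcal{X}$ (because $\Vert u^{\star}\Vert<R$ and $g(u^{\star})>g^{\star}$, in fact strictly interior). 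For $-p(u^{\star},y)\in\cyk{k}=\qm_k(Q)$: under {\sf (A4)}, if $p(u^{\star},y)<0$ on $\Y$ then $-p(u^{\star},y)$ is a polynomial in $y$ that is positive on $\Y$, and since $\qm(Q)$ is Archimedean, Theorem \ref{th::PP} gives $-p(u^{\star},y)\in\qm_k(Q)$ for $k$ large; the delicate sub-point is that $p(u^{\star},y)$ need not be strictly negative at a minimizer, so I would instead invoke {\sf (A4)} to pick $\bar u$ with $p(\bar u,y)<0$ on $\Y$ and replace $u^{\star}$ by the convex combination $u_t=(1-t)u^{\star}+t\bar u$ for small $t>0$: by convexity $p(u_t,y)\le (1-t)p(u^{\star},y)+t\,p(\bar u,y)<0$ on the compact set $\Y$, $u_t$ stays feasible, stays in the interior of $\mathcal{X}$ for small $t$, and $f(u_t)/g(u_t)\le r^{\star}+\varepsilon$ for $t$ small by continuity — so $u_t\in\bs_\varepsilon$ and serves as $u^{(\varepsilon)}$. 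Finally, for $L_{f,g}(x,\mu^{\star},\eta^{\star})+\varepsilon g(x)\in\csk{k}$: by Proposition \ref{prop::red} this polynomial is convex, nonnegative on $\RR^m$ (its minimum over $\RR^m$ is $\varepsilon\inf g\ge 0$ or, more carefully, $L_{f,g}\ge 0$ everywhere so the sum is $\ge 0$ on $\RR^m$), hence in particular positive on a neighborhood of $\mathcal{X}$ after adding a tiny slack; strictly speaking I would argue $L_{f,g}(x,\mu^{\star},\eta^{\star})+\varepsilon g(x)$ is $\ge \varepsilon g^{\star}>0$ on $\mathcal{X}$ and apply Theorem \ref{th::PP} on $\mathcal{X}$ to land it in $\qm_{k}(G)$ for $k$ large. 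With all hypotheses checked, Theorem \ref{th::main1} yields $\lim_k r_k^{\psdp}=\lim_k r_k^{\dsdp}=r^{\star}$.

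For part (ii), I would argue that the primal problem $(\ref{eq::f*r}k)$ is strictly feasible (an interior point is furnished by the construction in part (i), since the required memberships hold with strict room when $u^{(\varepsilon)}$ is interior to $\mathcal{X}$ and $p(u^{(\varepsilon)},\cdot)$ is strictly negative), so Slater's condition holds for the SDP pair; standard conic duality then gives $r_k^{\psdp}=r_k^{\dsdp}$ and attainment of the dual optimal value $r_k^{\dsdp}$ whenever it is finite, and finiteness for large $k$ follows from part (i) since $r_k^{\dsdp}\to r^{\star}<+\infty$. For part (iii), let $\mL_k^{\star}$ be a minimizer of $(\ref{eq::f*rdual}k)$. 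The normalization $\mL_k^{\star}(g)=1$ together with the constraint $\mL_k^{\star}|_{\RR[x]_{2k}}\in(\qm_k(G))^*$ (in particular $\mL_k^{\star}$ is nonnegative on squares up to degree $2k$ and $R^2-\Vert x\Vert^2$ times squares) bounds the moment sequences: the localizing constraint with $R^2-\Vert x\Vert^2$ forces $\mL_k^{\star}(x_i^2)\le R^2\mL_k^{\star}(1)$, and $\mL_k^{\star}(1)$ is bounded because $g(x)\le$ (something affine or because $g(u^{\star})\le$ a bound) — more precisely, since $g(x)-g^{\star}$ times a square is in $\qm_k(G)$, $g^{\star}\mL_k^{\star}(1)\le \mL_k^{\star}(g)=1$, so $\mL_k^{\star}(1)\le 1/g^{\star}$, and combined with nonnegativity $\mL_k^{\star}(1)>0$; hence $\mL_k^{\star}(x)/\mL_k^{\star}(1)$ lies in the ball of radius $R$ and has a convergent subsequence. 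For any convergent subsequence with limit $x^\infty$, I would pass to the limit in the Jensen-type inequalities \eqref{eq::J1}--\eqref{eq::J2} — which require the localized functionals to admit representing measures — but here, since $\mathcal{X}=\{R^2-\Vert x\Vert^2\ge0,\ g-g^{\star}\ge0\}$ is compact and $\qm(G)$ Archimedean, the restriction $\mL_k^{\star}|_{\RR[x]_{\mathbf d}}$ does admit a representing measure on $\mathcal{X}$ for $k$ large (by a compactness/flat-limit argument, or more simply by noting any weak-$*$ limit of $\mL_k^{\star}/\mL_k^{\star}(1)$ is a full moment sequence nonnegative on $\qm(G)$, hence by Putinar has a representing measure $\nu$ on $\mathcal{X}$), and then Theorem \ref{th::main}~(ii) applied in the limit gives $x^\infty=\bigl(\int x_1 d\nu,\ldots,\int x_m d\nu\bigr)/\int d\nu\in\bs_0=\bs$. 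If $\bs$ is a singleton, every subsequential limit equals that unique point, so the whole sequence converges to it.

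\textbf{Main obstacle.} The technical heart, and the step I expect to be most delicate, is justifying in part (iii) that the (suitably normalized and truncated) dual optimal functionals $\mL_k^{\star}$ converge, along a subsequence, to something with a genuine representing measure so that Jensen's inequality \eqref{eq::J1}--\eqref{eq::J2} can be invoked at the limit — i.e. transferring the ``admits a representing measure'' hypothesis of Theorem \ref{th::main}~(ii) from an assumption to a consequence of the Archimedean property and a weak-$*$ compactness argument, while simultaneously keeping control of $\mL_k^{\star}(1)$ away from $0$. The secondary subtlety, already flagged above, is that $p(u^{\star},\cdot)$ is only guaranteed $\le 0$ rather than $<0$ at an optimizer, which is exactly why {\sf (A4)} (existence of a Slater-type point $\bar u$ for the semi-infinite constraint) is assumed and why the convex-combination perturbation $u_t$ is needed both for part (i) and to get strict primal feasibility in part (ii).
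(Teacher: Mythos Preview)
Your argument for part (i) is essentially the paper's: form the convex combination $u_t=(1-t)u^{\star}+t\bar u$, use continuity to land $u_t\in\bs_\varepsilon$ with $u_t$ strictly interior to $\mathcal{X}$, use convexity to get $p(u_t,\cdot)<0$ on $\Y$, then apply Putinar on both sides. Your outline for part (iii) also matches the paper's in spirit (bound the moments, pass to a subsequential limit, get a representing measure via Putinar and then Jensen), and you correctly flag the delicate point about the representing measure. Two things the paper makes explicit that you leave implicit: first, the compactness of the moment sequences requires bounding \emph{all} $\mL_k^{\star}(x^\alpha)$, not just degree $0$ and $2$; the paper does this by using the Archimedean property $N_t\pm x^\alpha\in\qm_{l(t)}(G)$ and a Tychonoff argument on the zero-extended sequences. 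Second, $\mL_k^{\star}(1)>0$ strictly follows because the same Archimedean bounds force $|\mL_k^{\star}(x^\alpha)|\le N_t\,\mL_k^{\star}(1)$, so $\mL_k^{\star}(1)=0$ would kill all moments and contradict $\mL_k^{\star}(g)=1$.

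There is, however, a genuine gap in your part (ii). You assert that $(\ref{eq::f*r}k)$ is strictly feasible because ``an interior point is furnished by the construction in part (i), since the required memberships hold with strict room when $u^{(\varepsilon)}$ is interior to $\mathcal{X}$ and $p(u^{(\varepsilon)},\cdot)$ is strictly negative.'' But $u^{(\varepsilon)}$ is a point in $\RR^m$ used to build the \emph{moment} functional $\mL'$ feasible for $(\ref{eq::f*rdual}k)$, not a feasible triple $(\rho,\mH,\eta)$ for the SOS problem $(\ref{eq::f*r}k)$; and even the evaluation functional at an interior point of $\mathcal{X}$ has a rank-one moment matrix, so it is on the boundary of $(\qm_k(G))^*$, not in its interior. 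On the SOS side, Putinar's theorem only puts $L_{f,g}+\varepsilon g$ in $\qm_k(G)$, with no claim of interior membership in the truncated cone. So neither direction of ``strict feasibility'' is actually established by what you wrote. The paper proceeds differently: it shows directly that the feasible set of $(\ref{eq::f*rdual}k)$ is nonempty, closed, and \emph{bounded} --- boundedness coming from the ball constraint $R^2-\Vert x\Vert^2\in G$ via \cite[Lemma 3]{CD2016} together with the bound $0\le\mL_k(1)\le 1/g^{\star}$ you already derived --- and then invokes the standard conic-duality fact (cf.\ \cite[Section 4.1.2]{SS2000}) that a nonempty bounded optimal set on the moment side forces strict feasibility of $(\ref{eq::f*r}k)$, hence strong duality and attainment of $r_k^{\dsdp}$. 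You have all the ingredients for this argument; you just need to assemble them in this order rather than claim strict feasibility directly.
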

\begin{proof}
	(i) Clearly, $\cyk{k}\subset\mathscr{P}(\Y)$ for any $k\in\N,
k\ge\lceil\mathbf{d}/2\rceil$. Let
$\varepsilon>0$ be fixed. Let $u^{\star}\in\bs$ be as in
\eqref{eq::rg} and $u^{(\lambda)}:=\lambda
u^{\star}+(1-\lambda)\bar{u}$. As $\K$ is
convex, $u^{(\lambda)}\in\K$ for any $0\le \lambda\le 1$. By the
continuity of $g$ and $\frac{f}{g}$ on $\K$, there exists a
$\lambda'\in(0,1)$
such that 
\begin{equation}\label{eq::rg2}
	\Vert u^{(\lambda')}\Vert<R,\quad
	g(u^{(\lambda')})>g^{\star}\quad\text{and}\quad
	u^{(\lambda')}\in\bs_{\varepsilon}. 
\end{equation}
For any $y\in\Y$, 
by the convexity of $p(x,y)$ in $x$,
\[
	p(u^{(\lambda')},y)\le \lambda' p(u^{\star},y) + (1-\lambda')
	p(\bar{u},y)<0.
\]
By Theorem \ref{th::PP}, there exists a $k_1\in\N$ such that
$-p(u^{(\lambda')},y)\in \cyk{k}$ for any $k\ge k_1$. 
Since $g^{\star}>0$, {\sf (A3)} implies that 
$L_{f,g}(x,\mu^{\star},\eta^{\star})+\varepsilon g(x)$ is positive on
the set $\mathcal{X}$. 
As $\qm(G)$ is Archimedean, by Theorem \ref{th::PP} again, there
exists a $k_2\in\N$ such that
$L_{f,g}(x,\mu^{\star},\eta^{\star})+\varepsilon g(x)\in\csk{k}$ for
any $k\ge k_2$. 
It is obvious from \eqref{eq::rg2} that $h(u^{(\lambda')})\ge 0$ for
any $h\in\csk{k}$, $k\ge k_2$.
Let $k_{\varepsilon}=\max\{k_1, k_2, \lceil\mathbf{d}/2\rceil\}$, then the sequences
$\{\csk{k}\}$ and $\{\cyk{k}\}$ satisfies the conditions in Theorem
\ref{th::main1} which implies the conclusion.

(ii) From the above, the linear functional $\mL'\in(\RR[x])^*$ such that
$\mL'(x^\alpha)=\frac{(u^{\lambda'})^\alpha}{g(u^{\lambda'})}$ for
each $\alpha\in\N^m$ is feasible for $(\ref{eq::f*rdual}k)$ whenever
$k\ge\max\{k_1,\lceil\mathbf{d}/2\rceil\}$ and hence $r^{\dsdp}_k<
+\infty$. For any $k\ge
\max\{k_1,\lceil\mathbf{d}/2\rceil\}$ and any feasible point $\mL_k$ of
$(\ref{eq::f*rdual}k)$, because
$\mL_k\in(\csk{k})^*$, we have $\mL_k(g-g^{\star})\ge 0$ and
$\mL_k(1)\ge 0$. Hence, along with $\mL_k(g)=1$, we have $0\le
\mL_k(1)\le 1/g^{\star}$ for all $k\ge \lceil\mathbf{d}/2\rceil$. 
Since there is a ball constraint in \eqref{eq::xset}, by \cite[Lemma
3]{CD2016} and its proof, we have
\[
	\sqrt{\sum_{\alpha\in\N^m_{2k}}\left(\mL_k(x^{\alpha})\right)^2}
	\le\mL_k(1) \sqrt{\binom{m+k}{m}}\sum_{i=0}^k R^{2i}
	\le \frac{1}{g^{\star}} \sqrt{\binom{m+k}{m}}\sum_{i=0}^k R^{2i}
\]
for all $k\in \N$, $k\ge \lceil\mathbf{d}/2\rceil$ and all 
$\mL_k\in(\csk{k})^*$. In other words, for any $k\ge
\max\{k_1,\lceil\mathbf{d}/2\rceil\}$, the feasible set of the
$(\ref{eq::f*rdual}k)$ is nonempty, bounded and closed. Then,
the solution set of the
$(\ref{eq::f*rdual}k)$ is nonempty and bounded, which implies that
$(\ref{eq::f*r}k)$ is strictly feasible (c.f. \cite[Section
4.1.2]{SS2000}). Consequently, the strong duality
$r_k^{\psdp}=r_k^{\dsdp}$ holds by \cite[Theorem 4.1.3]{SS2000}.

(iii) As $\qm(G)$ is Archimedean, by the definition,  
\[
	\forall t\in\N,\ \exists N_t,\ l(t)\in\N,\ \forall \alpha\in\N_t^m,\
	N_t\pm x^{\alpha}\in\qm_{l(t)}(G)=\csk{l(t)}. 
\]
For any $k\ge l(t)$, since $\mL^{\star}_k\in(\csk{k})^*$, for all
$\alpha\in\N_t^m$, 
\begin{equation}\label{eq::B}
	|\mL^{\star}_k(x^{\alpha})|\le
	\mL^{\star}_k(N_t)=N_t\cdot\mL^{\star}_k(1)\le N_t/g^{\star}. 
\end{equation}
Then, for any $k\ge\lceil \mathbf{d}/2\rceil$, we
have $|\mL^{\star}_k(x^{\alpha})|\le N_t'$ for any $\alpha\in\N_t^m$
where 
\[
	N_t':=\max\{N_t/g^{\star}, M_t\}\quad\text{and}\quad	M_t:=\max\{|\mL^{\star}_k(x^{\alpha})| \mid \alpha\in\N_t^m,
	\lceil\mathbf{d}/2\rceil\le k\le l(t)\}.
\]
Moreover, from \eqref{eq::B} and the equality $\mL^{\star}_k(g)=1$, 
we can see that $\mL^{\star}_k(1)>0$ for all
$k\ge\lceil\mathbf{d}/2\rceil$.
For any $k\ge \lceil\mathbf{d}/2\rceil$, extend
$\mL^{\star}_k\in(\RR_{2k}[x])^*$ to $(\RR[x])^*$ by letting
$\mL^{\star}_k(x^{\alpha})=0$ for all $|\alpha|>2k$ and denote it by
$\wt{\mL}^{\star}_k$. Then, for any $k\ge\lceil\mathbf{d}/2\rceil$ and
any $\alpha\in\N^m$, it holds that $|\wt{\mL}^{\star}_k(x^{\alpha})|\le
N_{|\alpha|}'$. That is, 
\begin{equation}\label{eq::product}
	\left\{\left(\wt{\mL}^{\star}_k(x^{\alpha})\right)_{\alpha\in\N^m}
\right\}_{k\ge\lceil\mathbf{d}/2\rceil}\subset
\prod_{\alpha\in\N^m}\left[-N_{|\alpha|}', N_{|\alpha|}'\right]. 
\end{equation}
By Tychonoff's theorem, the product space on the right side of
\eqref{eq::product} is compact in the product topology. 
Therefore, there exists a 
subsequence $\{\wt{\mL}^{\star}_{k_i}\}_{i\in\N}$ of
$\{\wt{\mL}^{\star}_k\}_k$ and a $\wt{\mL}^{\star}\in(\RR[x])^*$ such that
$\lim_{i\rightarrow\infty}\wt{\mL}^{\star}_{k_i}(x^{\alpha})=\wt{\mL}^{\star}(x^{\alpha})$
for all $\alpha\in\N^m$. From the pointwise convergence, we get the
following: (a) $\wt{\mL}^{\star}\in(\qm(G))^*$;
(b) $\wt{\mL}^{\star}(g)=1$; (c) $\wt{\mL}^{\star}(p(x,y))\le 0$ for
any $y\in\Y$ since $\wt{\mL}^{\star}_k(p(x,y))\le 0$ for any
$k\ge\lceil\mathbf{d}/2\rceil$; (d) $\wt{\mL}^{\star}(\psi_j)\le
0$ for $j=1,\ldots,s$. By (a) and Putinar's Positivstellensatz, along
with Haviland's theorem \cite{Haviland1936}, 
$\wt{\mL}^{\star}$ admits a representing nonnegative measure $\nu$, i.e.,
$\wt{\mL}^{\star}(x^{\alpha})=\int x^{\alpha} d\nu$ for all
$\alpha\in\N^m$. From (b) and \eqref{eq::B}, $\wt{\mL}^{\star}(1)>0$.
Then, like in \eqref{eq::J1} and \eqref{eq::J2}, by (c) and (d), we
can see that
\[
	\lim_{i\rightarrow\infty}\frac{\mL^{\star}_{k_i}(x)}{\mL^{\star}_{k_i}(1)}
	=\frac{\wt{\mL}^{\star}(x)}{\wt{\mL}^{\star}(1)}\in\K.
\]
From (i), 
\[
	\begin{aligned}
		r^{\star}=\lim_{i\rightarrow\infty}\wt{\mL}^{\star}_{k_i}(f)&=
		\wt{\mL}^{\star}(f)=\frac{\wt{\mL}^{\star}(f)}{\wt{\mL}^{\star}(g)}
		=\frac{\int f(x)d\nu}{\int g(x)d\nu}
		=\frac{\frac{1}{\int d\nu}\int f(x)d\nu}{\frac{1}{\int d\nu}\int g(x)d\nu}\\
		&\ge
		\frac{f\left(\wt{\mL}^{\star}(\bX)/\wt{\mL}^{\star}(1)\right)}
		{g(\wt{\mL}^{\star}(\bX)/\wt{\mL}^{\star}(1))}\ge r^{\star},\\
	\end{aligned}
\]
which implies that
$\lim_{i\rightarrow\infty}\mL^{\star}_{k_i}(x)/\mL^{\star}_{k_i}(1)\in\bs$.

As $\bs$ is singleton, $\bs=\{u^{\star}\}$. 
The above arguments show that $\lim_{i\rightarrow\infty}
\mL^{\star}_{k_i}(x)/\mL^{\star}_{k_i}(1)=u^{\star}$ for any
convergent subsequence of $\{\mL^{\star}_k(x)/\mL^{\star}_k(1)\}_k$. 
By \eqref{eq::product}, 
$\{\mL^{\star}_k(x)/\mL^{\star}_k(1)\}_k\subset[-N'_1, N'_1]^m$ which is
bounded.  Thus, the whole sequence
$\{\mL^{\star}_k(x)/\mL^{\star}_k(1)\}_k$ converges to $u^{\star}$ as
$k$ tends to $\infty$. 
\end{proof}

\subsection{Convergence rate analysis}
Next, we give some convergence rate analysis of $ r_k^{\psdp}$ and
$r_k^{\dsdp}$ based on Theorem \ref{th::complexity}.

Let us fix $R, g^{\star}\in\RR$, $u^{\star}\in\K$ satisfying \eqref{eq::rg}, 
$\mu^{\star}\in\mathcal{M}(\Y), \eta^{\star}\in\RR^s_+$ satisfying
{\sf (A3)}, $\bar{u}\in\K$ satisfying {\sf (A4)}, a number
$R_{\mathcal{X}}>R$ and 
a number $R_{\Y}>0$ such that $\Y\subset (-R_{\Y}, R_{\Y})^n$.

For any $\varepsilon>0$, define the following constants.
\[
	\begin{array}{ll}
		N_e:=\left\{
			\begin{array}{ll}
				\frac{\Vert \bar{u}\Vert-R}{\Vert\bar{u}\Vert-\Vert u^{\star}\Vert},&
					\text{if}\ \Vert \bar{u}\Vert\ge R, \\
					0,& \text{otherwise},
				\end{array}
				\right.&
				N_g:=\left\{
					\begin{array}{ll}
						\frac{g(\bar{u})-g^{\star}}{g(\bar{u})-g(u^{\star})},&
						\text{if}\ g(\bar{u})\le g^{\star}, \\
						0,& \text{otherwise},
					\end{array}
					\right.\\
		N_f:=\left\{
			\begin{array}{ll}
				\frac{f(\bar{u})-(r^{\star}+\varepsilon)g(\bar{u})}
				{f(\bar{u})-(r^{\star}+\varepsilon)g(\bar{u})+\varepsilon
				g(u^{\star})},&
				\text{if}\ f(\bar{u})\ge
				(r^{\star}+\varepsilon)g(\bar{u}), \\
					0,& \text{otherwise},
				\end{array}
				\right. &
			N_{\varepsilon}:=\max\{N_e, N_g, N_f\}.
	\end{array}
\]
It is easy to see that $N_{\varepsilon}\in[0, 1)$.
Let 
\begin{equation}\label{eq::ul}
	\lambda':=\frac{N_{\varepsilon}+1}{2}\quad\text{and}\quad 
	u^{(\lambda')}=\lambda' u^{\star}+(1-\lambda')\bar{u}. 
\end{equation}
\begin{lemma}\label{lem::N}
	The point $u^{(\lambda')}$ in \eqref{eq::ul} satisfies the
	conditions in \eqref{eq::rg2}. 
\end{lemma}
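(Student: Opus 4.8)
The plan is to verify the three requirements of \eqref{eq::rg2} separately, each time exploiting that by construction $\lambda'=\tfrac{N_{\varepsilon}+1}{2}$ lies strictly between $N_{\varepsilon}$ and $1$, so that $\lambda'>\max\{N_e,N_g,N_f\}$ and $1-\lambda'>0$. In each case the strategy is the same: a one-line computation shows that the corresponding threshold ($N_e$, $N_g$, or $N_f$) is exactly the value of the convex-combination parameter $\lambda$ above which the relevant inequality holds for $u^{(\lambda)}$, while the ``$0$'' branch in the definition of that constant simply records the situation in which the inequality is already true for every $\lambda\in(0,1)$. It is convenient to first note that $N_e,N_g,N_f\in[0,1)$, so that $\lambda'\in(N_{\varepsilon},1)$ is well placed.

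For $\Vert u^{(\lambda')}\Vert<R$: the triangle inequality gives $\Vert u^{(\lambda)}\Vert\le\lambda\Vert u^{\star}\Vert+(1-\lambda)\Vert\bar u\Vert=\Vert\bar u\Vert-\lambda(\Vert\bar u\Vert-\Vert u^{\star}\Vert)$. If $\Vert\bar u\Vert<R$ this is $<R$ for every $\lambda\in(0,1)$ since $\Vert u^{\star}\Vert<R$; if $\Vert\bar u\Vert\ge R$, then $\Vert\bar u\Vert-\Vert u^{\star}\Vert>0$ and the last expression is $<R$ precisely when $\lambda>N_e$. For $g(u^{(\lambda')})>g^{\star}$: since $-g$ is convex, $g$ is concave, so $g(u^{(\lambda)})\ge\lambda g(u^{\star})+(1-\lambda)g(\bar u)=g(\bar u)+\lambda\big(g(u^{\star})-g(\bar u)\big)$. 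If $g(\bar u)>g^{\star}$ this lower bound exceeds $g^{\star}$ for all $\lambda\in(0,1)$; if $g(\bar u)\le g^{\star}$ then $g(u^{\star})-g(\bar u)>0$ (because $g(u^{\star})>g^{\star}\ge g(\bar u)$), and the lower bound exceeds $g^{\star}$ exactly when $\lambda>N_g$. Since $\lambda'>N_e$ and $\lambda'>N_g$, both requirements are met; in particular $g(u^{(\lambda')})>g^{\star}>0$.

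For $u^{(\lambda')}\in\bs_{\varepsilon}$: first $u^{(\lambda')}\in\K$ by convexity of $\K$, and as $g(u^{(\lambda')})>0$ it suffices to show $F(u^{(\lambda')})\le 0$, where $F:=f-(r^{\star}+\varepsilon)g$. The key point is that $F$ is convex: under the first alternative of \textsf{(A2)} we have $r^{\star}=\min_{\K}f/g\ge 0$, hence $r^{\star}+\varepsilon>0$ and $-(r^{\star}+\varepsilon)g$ is convex; under the second alternative $g$ is affine, so $F$ is convex as well. Therefore $F(u^{(\lambda)})\le\lambda F(u^{\star})+(1-\lambda)F(\bar u)$. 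Now $F(u^{\star})=f(u^{\star})-(r^{\star}+\varepsilon)g(u^{\star})=-\varepsilon g(u^{\star})<0$, using $f(u^{\star})=r^{\star}g(u^{\star})$ from $u^{\star}\in\bs$. If $f(\bar u)<(r^{\star}+\varepsilon)g(\bar u)$ then $N_f=0$ and $F(u^{(\lambda)})<0$ for every $\lambda\in(0,1)$; otherwise $F(\bar u)\ge 0$, the number $F(\bar u)+\varepsilon g(u^{\star})$ is positive (this is where $\varepsilon>0$ enters), and $F(u^{(\lambda)})\le F(\bar u)-\lambda\big(F(\bar u)+\varepsilon g(u^{\star})\big)$, which is $\le 0$ precisely when $\lambda\ge N_f$. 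Since $\lambda'>N_f$ we obtain $F(u^{(\lambda')})\le 0$, i.e.\ $f(u^{(\lambda')})\le(r^{\star}+\varepsilon)g(u^{(\lambda')})$; dividing by $g(u^{(\lambda')})>0$ yields $f(u^{(\lambda')})/g(u^{(\lambda')})\le r^{\star}+\varepsilon$, so $u^{(\lambda')}\in\bs_{\varepsilon}$.

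I do not anticipate a genuine obstacle: the statement is essentially a bookkeeping exercise. The only points that deserve attention are verifying $N_e,N_g,N_f\in[0,1)$ so that $\lambda'$ lands in $(N_{\varepsilon},1)$, matching the ``otherwise'' branch of each constant with the case where the corresponding inequality is automatic, and --- the one mildly substantive step --- justifying that $F=f-(r^{\star}+\varepsilon)g$ is convex under \textsf{(A2)}, which is precisely what makes the convex-combination argument applicable even though the objective of \eqref{FMP} is not convex.
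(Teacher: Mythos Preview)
Your proposal is correct and follows essentially the same approach as the paper's proof: a case analysis for each of the three conditions in \eqref{eq::rg2}, using the triangle inequality for the norm bound, concavity of $g$ for the $g^{\star}$ bound, and convexity of $f-(r^{\star}+\varepsilon)g$ for the $\bs_{\varepsilon}$ membership. Your packaging of the third step via the single convex function $F=f-(r^{\star}+\varepsilon)g$, together with the explicit justification from {\sf (A2)} that $F$ is convex, is slightly cleaner than the paper's version, which handles $f$ and $g$ separately and leaves the sign of $r^{\star}+\varepsilon$ implicit; but the underlying argument is identical.
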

\begin{proof}
	If $\Vert\bar{u}\Vert<R$, then clearly $\Vert u^{(\lambda')}\Vert<R$;
otherwise, $\Vert \bar{u}\Vert\ge R>\Vert u^{\star}\Vert$ and 
\[
	\begin{aligned}
		\Vert u^{(\lambda')}\Vert=
		\Vert \lambda'u^{\star}+(1-\lambda')\bar{u}\Vert&\le \lambda'\Vert u^{\star}\Vert
		+(1-\lambda')\Vert\bar{u}\Vert\\
		&<\Vert \bar{u}\Vert +N_e(\Vert
		u^{\star}\Vert-\Vert\bar{u}\Vert)= \Vert
		u^{\star}\Vert+R-\Vert u^{\star}\Vert=R.
	\end{aligned}
\]
Since $g(x)$ is concave, we have
\[
	g(u^{(\lambda')})\ge \lambda' g(u^{\star})+(1-\lambda')
	g(\bar{u}). 
\]
If $g(\bar{u})>g^{\star}$, it is clear that
$g(u^{(\lambda')})>g^{\star}$. Suppose that $g(\bar{u})\le
g^{\star}$, then $g(\bar{u})<g(u^{\star})$ and 
\[
	\begin{aligned}
		g(u^{(\lambda')})&\ge \lambda' g(u^{\star})+(1-\lambda')
		g(\bar{u})=\lambda'(g(u^{\star})-g(\bar{u}))+g(\bar{u})\\
		&>N_g(g(u^{\star})-g(\bar{u}))+g(\bar{u})=g^{\star}-g(\bar{u})+g(\bar{u})=g^{\star}.
	\end{aligned}
\]
If $f(\bar{u})<(r^{\star}+\varepsilon)g(\bar{u})$, by the convexity of
$f(x)$ and $-g(x)$, it holds that 
\begin{equation}\label{eq::ineq}
	f(u^{(\lambda')})\le \lambda' f(u^{\star})+(1-\lambda')
	f(\bar{u})<
	(r^{\star}+\varepsilon)(\lambda' g(u^{\star})+(1-\lambda')
	g(\bar{u}))
	\le (r^{\star}+\varepsilon) g(u^{(\lambda')}), 
\end{equation}
which implies that $u^{(\lambda')}\in\bs_{\varepsilon}$. If $f(\bar{u})\ge
(r^{\star}+\varepsilon)g(\bar{u})$, we have 
\[
	\begin{aligned}
		&\lambda'(f(\bar{u})-(r^{\star}+\varepsilon)g(\bar{u}))-\lambda'(f(u^{\star})-(r^{\star}+\varepsilon)g(u^{\star}))\\
		&> N_f((f(\bar{u})-(r^{\star}+\varepsilon)g(\bar{u}))+\varepsilon
		g^{\star})\\
		&=f(\bar{u})-(r^{\star}+\varepsilon)g(\bar{u}).
	\end{aligned}
\]
Then, the second inequality of \eqref{eq::ineq} still holds and hence
$u^{(\lambda')}\in\bs_{\varepsilon}$.
Therefore, all conditions in
\eqref{eq::rg2} are satisfied by $u^{(\lambda')}$.
\end{proof}
Recall the norm defined in \eqref{eq::fnorm}.  Write 
$p(x,y)=\sum_{\beta\in\N^n}p_{y,\beta}(x) y^{\beta}$ and let 
\[
	p_{\max}:=\max_{\beta\in\N^n}\frac{\max_{\Vert x\Vert\le R}
	\vert p_{y,\beta}(x)\vert}{\binom{|\beta|}{\beta}}.
\]
Then, $p_{\max}$ is well-defined and $\Vert
p(u^{(\lambda')},y)\Vert\le p_{\max}$ by Lemma \ref{lem::N}. 
Denote $p^{\star}_{\bar{u}}:=\max_{y\in\Y} p(\bar{u},y)$. As $\Y$ is
compact, $p^{\star}_{\bar{u}}<0$. 
Denote $d_y=\deg_y p(x,y)$ and $L_{\max}:= \Vert
L_{f,g}(x,\mu^{\star},\eta^{\star})\Vert$ for simplicity. 
The convergence rate analysis of $ r_k^{\psdp}$ and $r_k^{\dsdp}$ is
presented in Proposition \ref{prop::rate}, where the only constant
depending on $\varepsilon$ is $N_\varepsilon$, and all
others depend on the problem data in \eqref{FMP} and the fixed $R,
g^{\star}, u^{\star}, \bar{u}, \mu^{\star}$, $\eta^{\star}$,
$R_{\mathcal{X}}$ and $R_{\Y}$ in the assumptions.

\begin{proposition}\label{prop::rate}
	Under {\sf (A1-4)} and the settings \eqref{eq::rg} and
	\eqref{eq::cy}$,$  
	there exist constants $c_1,\ c_2\in\RR$ $($depending on $q_i$'s$,$ $g,$
	$R$ and $g^{\star}$$)$ such that for any
	$\varepsilon>0,$ we have $r^{\star}-\varepsilon\le r_k^{\psdp}\le
	r^{\dsdp}_k\le r^{\star}+\varepsilon$ whenever
	\[
		k\ge\max\left\{
		c_1\exp\left[\left(d_y^2n^{d_y}
		\frac{2p_{\max}R_{\Y}^{d_y}}{(N_\varepsilon-1)p^{\star}_{\bar{u}}}\right)^{c_1}\right], 
	c_2\exp\left[\left(\mathbf{d}^2m^{\mathbf{d}}
		\frac{(L_{\max}+\varepsilon \Vert g(x)\Vert)R_{\mathcal{X}}^{\mathbf{d}}}{\varepsilon
		g^{\star}}\right)^{c_2}\right],
		\lceil\mathbf{d}/2\rceil\right\}. 
	\]
\end{proposition}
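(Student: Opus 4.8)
The plan is to combine Theorem \ref{th::main1} with the explicit complexity bound of Theorem \ref{th::complexity}, applied separately to the two families of cones $\{\cyk{k}\}$ and $\{\csk{k}\}$. First I would recall from the proof of Theorem \ref{th::asy}(i) that, for a fixed $\varepsilon>0$, the point $u^{(\lambda')}$ constructed in \eqref{eq::ul} (with $\lambda'=(N_\varepsilon+1)/2$) satisfies \eqref{eq::rg2} by Lemma \ref{lem::N}; in particular $\Vert u^{(\lambda')}\Vert<R$, so $u^{(\lambda')}\in\mathcal X$ and hence $h(u^{(\lambda')})\ge 0$ for every $h\in\csk{k}=\qm_k(G)$, and $u^{(\lambda')}\in\bs_\varepsilon$. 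Thus the only two conditions in Theorem \ref{th::main1} whose order of validity must be quantified are $-p(u^{(\lambda')},y)\in\cyk{k}$ and $L_{f,g}(x,\mu^{\star},\eta^{\star})+\varepsilon g(x)\in\csk{k}$. Once these hold for all $k$ past an explicit threshold, Theorem \ref{th::main} (i) gives $r^{\star}-\varepsilon\le r_k^{\psdp}\le r_k^{\dsdp}\le r^{\star}+\varepsilon$ for such $k$, which is exactly the assertion.

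For the first condition: by the convexity estimate in the proof of Theorem \ref{th::asy}(i), $p(u^{(\lambda')},y)\le \lambda' p(u^{\star},y)+(1-\lambda')p(\bar u,y)$, and using $p(u^{\star},y)\le 0$ and $p(\bar u,y)\le p^{\star}_{\bar u}<0$ we get $\min_{y\in\Y}\bigl(-p(u^{(\lambda')},y)\bigr)\ge (1-\lambda')(-p^{\star}_{\bar u})=\tfrac{1-N_\varepsilon}{2}(-p^{\star}_{\bar u})$. Since $\qm(Q)$ is Archimedean and $\Y\subset(-R_\Y,R_\Y)^n$, Theorem \ref{th::complexity} applied to the polynomial $-p(u^{(\lambda')},y)$ of degree $d_y$ in $y$, with $\Vert{-p(u^{(\lambda')},y)}\Vert\le p_{\max}$, yields a constant $c_1$ (depending only on the $q_i$'s) such that $-p(u^{(\lambda')},y)\in\qm_k(Q)=\cyk{k}$ whenever $k\ge c_1\exp\bigl[\bigl(d_y^2 n^{d_y}\tfrac{2p_{\max}R_\Y^{d_y}}{(N_\varepsilon-1)p^{\star}_{\bar u}}\bigr)^{c_1}\bigr]$ (note $(N_\varepsilon-1)p^{\star}_{\bar u}>0$, so the ratio is the positive quantity $2p_{\max}R_\Y^{d_y}/\bigl(\tfrac{1-N_\varepsilon}{2}(-p^{\star}_{\bar u})\cdot 2\bigr)$ written compactly). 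For the second condition: since $g^{\star}>0$, assumption {\sf (A3)} forces $L_{f,g}(x,\mu^{\star},\eta^{\star})+\varepsilon g(x)$ to be positive on $\mathcal X$, and in fact $\min_{x\in\mathcal X}\bigl(L_{f,g}(x,\mu^{\star},\eta^{\star})+\varepsilon g(x)\bigr)\ge \varepsilon g^{\star}$ because $L_{f,g}(x,\mu^{\star},\eta^{\star})\ge 0$ on $\K\supseteq\mathcal X$ by {\sf (A3)} and $g(x)\ge g^{\star}$ on $\mathcal X$. The polynomial has degree at most $\mathbf d$ in $x$ and norm at most $L_{\max}+\varepsilon\Vert g(x)\Vert$; since $\qm(G)$ is Archimedean and $\mathcal X\subset(-R_{\mathcal X},R_{\mathcal X})^m$, Theorem \ref{th::complexity} gives a constant $c_2$ (depending only on $R^2-\Vert x\Vert^2$ and $g(x)-g^{\star}$, hence on $g$, $R$, $g^{\star}$) with $L_{f,g}(x,\mu^{\star},\eta^{\star})+\varepsilon g(x)\in\qm_k(G)=\csk{k}$ whenever $k\ge c_2\exp\bigl[\bigl(\mathbf d^2 m^{\mathbf d}\tfrac{(L_{\max}+\varepsilon\Vert g(x)\Vert)R_{\mathcal X}^{\mathbf d}}{\varepsilon g^{\star}}\bigr)^{c_2}\bigr]$.

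Taking $k$ at least the maximum of these two thresholds and $\lceil\mathbf d/2\rceil$, all hypotheses of Theorem \ref{th::main1} (equivalently, of Theorem \ref{th::main}(i) with this $\varepsilon$) are satisfied, so $r^{\star}-\varepsilon\le r_k^{\psdp}\le r_k^{\dsdp}\le r^{\star}+\varepsilon$, which is the claimed bound. The constants $c_1,c_2$ are independent of $\varepsilon$ because they come from Theorem \ref{th::complexity} and depend only on the generators of the respective quadratic modules; the sole $\varepsilon$-dependence in the final threshold is through $N_\varepsilon$ (in the first term) and through the explicit factors $\varepsilon$ in the second term. The main technical point to get right is the lower bound $\min_{x\in\mathcal X}\bigl(L_{f,g}+\varepsilon g\bigr)\ge\varepsilon g^{\star}$ and the matching lower bound $\tfrac{1-N_\varepsilon}{2}(-p^{\star}_{\bar u})$ for $-p(u^{(\lambda')},\cdot)$ on $\Y$; once these are in hand, the rest is a direct substitution into Theorem \ref{th::complexity}, and there is no real obstacle beyond bookkeeping the norms $p_{\max}$ and $L_{\max}+\varepsilon\Vert g\Vert$ and the enclosing boxes $R_{\mathcal X},R_\Y$.
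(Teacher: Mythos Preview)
Your proposal is essentially the same as the paper's proof: apply Theorem \ref{th::complexity} separately to $-p(u^{(\lambda')},y)$ on $\Y$ and to $L_{f,g}(x,\mu^{\star},\eta^{\star})+\varepsilon g(x)$ on $\mathcal{X}$, using the lower bounds $\tfrac{(N_\varepsilon-1)p^{\star}_{\bar u}}{2}$ and $\varepsilon g^{\star}$ respectively, and then invoke Theorem \ref{th::main}(i). One slip to correct: you justify $L_{f,g}(x,\mu^{\star},\eta^{\star})\ge 0$ on $\mathcal{X}$ by asserting $\K\supseteq\mathcal{X}$, but this inclusion is false in general (the sets $\K$ and $\mathcal{X}$ are defined by unrelated constraints); the correct reason is simply that {\sf (A3)} states $\inf_{x\in\RR^m}L_{f,g}(x,\mu^{\star},\eta^{\star})=0$, so $L_{f,g}(x,\mu^{\star},\eta^{\star})\ge 0$ on all of $\RR^m$, and then $g(x)\ge g^{\star}$ on $\mathcal{X}$ gives the bound $\varepsilon g^{\star}$.
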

\begin{proof}
	Recall the proof of Theorem \ref{th::asy} (i). By Lemma
	\ref{lem::N}, $u^{(\lambda')}$ in \eqref{eq::ul} satisfies the
	conditions in \eqref{eq::rg2}. Then, 
by Theorem \ref{th::complexity}, there is a constant $c_1\in\RR$ depending
only on $q_i$'s such that $-p(u^{(\lambda')},y)\in\cyk{k}$ for all
$k\ge k_1$ where
\[
k_1:=c_1\exp\left[\left(d_y^2n^{d_y}
\frac{\Vert p(u^{(\lambda')},y)\Vert R_{\Y}^{d_y}}
{\min_{y\in\Y}(-p(u^{(\lambda')},y))}\right)^{c_1}\right],
\]
and there exists a constant $c_2\in\RR$ depending only on $g(x), R$
and $g^{\star}$ such that
$L_{f,g}(x,\mu^{\star},\eta^{\star})+\varepsilon g(x)\in\csk{k}$ for
all $k\ge k_2$ where
\[
k_2:= c_2\exp\left[\left(\mathbf{d}^2m^{\mathbf{d}}
	\frac{\Vert L_{f,g}(x,\mu^{\star},\eta^{\star})
	+\varepsilon g(x)\Vert R_{\mathcal{X}}^{\mathbf{d}}}
{\min_{x\in\mathcal{X}}(L_{f,g}(x,\mu^{\star},\eta^{\star})
+\varepsilon g(x))}\right)^{c_2}\right]. 
\]
For any $y\in\Y$, by the convexity of $p(x,y)$ in $x$,
\[
	-p(u^{(\lambda')},y)\ge -\lambda' p(u^{\star},y) - (1-\lambda')
	p(\bar{u},y)\ge(\lambda'-1)p_{\bar{u}}^{\star}
	=\frac{(N_\varepsilon-1)p_{\bar{u}}^{\star}}{2}>0. 
\]
Moreover, $L_{f,g}(x,\mu^{\star},\eta^{\star})+\varepsilon g(x)\ge
\varepsilon g^{\star}$
on the set $\mathcal{X}$ in \eqref{eq::xset}. Therefore, 
\[
k_1\le c_1\exp\left[\left(d_y^2n^{d_y}
\frac{2p_{\max}R_{\Y}^{d_y}}{(N_\varepsilon-1)p^{\star}_{\bar{u}}}\right)^{c_1}\right]\quad
\text{and}\quad
k_2\le 
c_2\exp\left[\left(\mathbf{d}^2m^{\mathbf{d}}
	\frac{(L_{\max}+\varepsilon \Vert g(x)\Vert) R_{\mathcal{X}}^{\mathbf{d}}}
{\varepsilon g^{\star}}\right)^{c_2}\right]. 
\]
Then, the conclusion follows. 
\end{proof}

\subsection{Discussions on the stop criterion}
Recall the asymptotic convergence of the hierarchy of SDP
relaxations $(\ref{eq::f*r}k)$ and $(\ref{eq::f*rdual}k)$ for the
FSIPP problem \eqref{FMP} established in Theorem \ref{th::asy}.  
Before we give an example to show the efficiency of this method, let
us discuss how to check whether or not
$\mL_k^{\star}(x)/\mL_k^{\star}(1)$  where $\mL^{\star}_k$ is a
minimizer of $(\ref{eq::f*rdual}k)$ for some $k$ is a satisfying
solution to \eqref{FMP}. 

Under {\sf (A1-2)}, it is clear that a feasible point $u^{\star}\in\K$
is a minimizer of \eqref{FMP} if and only if $u^{\star}$ is a 
minimizer of the convex semi-infinite programming problem 
\begin{equation}\label{eq::csipu}
	\min_{x\in\K}\ f(x)-\frac{f(u^{\star})}{g(u^{\star})} g(x). 
\end{equation}
For \eqref{eq::csipu}, it is well-known \cite{Still2007} that if the
KKT condition holds at $u^{\star}\in\K$, i.e. there are finite subsets
$\Lambda(u^{\star})\subset\Y$, $J(u^{\star})\subset\{1,\ldots,s\}$ and multipliers
$\gamma_y\ge 0$, $y\in\Lambda(u^{\star})$, $\eta_j\ge 0$, $j\in
J(u^{\star})$ such that
\begin{equation}\label{eq::KKT}
	\begin{aligned}
		& p(u^{\star},y)=0,\ \forall y\in\Lambda(u^{\star}),\
		\psi_j(u^{\star})=0,\ \forall j\in
		J(u^{\star}), \\
		& \nabla f(u^{\star})-\frac{f(u^{\star})}{g(u^{\star})} \nabla
		g(u^{\star}) +
		\sum_{y\in\Lambda(u^{\star})} \gamma_{y} \nabla_x p(u^{\star},y) + \sum_{j\in
			J(u^{\star})} \eta_j \psi_j(u^{\star})=0, 
	\end{aligned}
\end{equation}
then $u^{\star}$ is a minimizer of \eqref{eq::csipu}. The converse
holds if $\K$ satisfies the Slater condition. Next, we
use this fact to give a stop criterion of the hierarchy of SDP
relaxations $(\ref{eq::f*rdual}k)$ for \eqref{FMP}.

Recall Lasserre's SDP relaxation method for polynomial optimization
problems introduced in Section \ref{sec::pre}. Fix a $k\in\N$ and let
$u^{\star}=\mL_k^{\star}(x)/\mL_k^{\star}(1)$. Denote by $\tau$ a small positive
number as a given tolerance. Now, we proceed with the following steps:
\begin{enumerate}[Step 1.]
	\item Solve the polynomial minimization problem 
		\begin{equation}\label{eq::pu}
			p^{\star}:=\min_{y\in\Y} -p(u^{\star}, y)
		\end{equation}
		by Lasserre's SDP relaxation method
		\eqref{eq::lppp} using the software {\sf GloptiPoly}. If
		\[
		\max\{-p^{\star}, \psi_1(u^{\star}),\ldots,
		\psi_s(u^{\star})\}\le \tau,
		\]
		then $u^{\star}$ is a feasible
		point of \eqref{FMP} within the tolerance $\tau$. 
		In the case when Condition \ref{con::truncation} holds in Lasserre's
		relaxations, we can extract the set of
		global minimizers of \eqref{eq::pu} which is a finite set in
		this case (c.f.  \cite{CFKmoment,LasserreHenrion}) and we denote it by
		$\Lambda(u^{\star})$. 
		Let $J(u^{\star}):=\{j\mid
			|\psi_j(u^{\star})|\le \tau\}$, then
		$\Lambda(u^{\star})\cup J(u^{\star})$ indexes the active
		constraints at $u^{\star}$ within the tolerance $\tau$.
	\item Solve the non-negative least-squares problem
		\begin{equation}\label{eq::lsq}
				\qquad\omega:=\min_{\gamma_y\ge 0, \eta_j\ge 0} \Big\Vert \nabla
				f(u^{\star})-\frac{f(u^{\star})}{g(u^{\star})} \nabla
				g(u^{\star}) + \sum_{y\in\Lambda(u^{\star})}
				\gamma_{y} \nabla_x p(u^{\star},y) + \sum_{j\in
					J(u^{\star})} \eta_j \psi_j(u^{\star})\Big\Vert^2,
				\end{equation}
			which can be done by the command {\itshape lsqnonneg} in
			Matlab. If $\omega\le\tau$, then the KKT condition in
			\eqref{eq::KKT} holds at
			$\mL_k^{\star}(x)/\mL_k^{\star}(1)$ within the 
			tolerance $\tau$. Then we may terminate the SDP
			relaxations $(\ref{eq::f*rdual}k)$ at the order $k$ and output
			$\mL_k^{\star}(x)/\mL_k^{\star}(1)$ as a numerical
			minimizer of \eqref{FMP}.
\end{enumerate}
The key of the above procedure is Condition \ref{con::truncation}
which can certify the finite convergence of Lasserre's relaxations for
\eqref{eq::pu} and be used to extract the set $\Lambda(u^{\star})$. 
For a polynomial minimization problem with {\itshape generic}
coefficients data, Nie proved that Condition \ref{con::truncation}
holds in its Lasserre's SDP relaxations (c.f. \cite[Theorem
1.2]{NieFiniteCon} and \cite[Theorem 2.2]{NieFlatTruncation}). 
Hence, an interesting problem is that
if the coefficients data in \eqref{FMP} is generic, does Condition
\ref{con::truncation} always hold in Lasserre's SDP relaxations of
\eqref{eq::pu}? It is not clear to us yet because the
coefficients of $p(u^{\star},y)$ also depend on the solutions
$\mL^{\star}_k$ to $(\ref{eq::f*rdual}k)$ and thus we leave it for
future research. 

\vskip 10pt

Several numerical examples will be presented in the rest of this paper
to show the efficiency of the corresponding SDP relaxations. We use
the software {\sf Yalmip} \cite{YALMIP} and call the SDP solver SeDuMi
\cite{Sturm99} to implement and solve the resulting SDP problems
\eqref{eq::f*r} and \eqref{eq::f*rdual}. To show the advantage of our
SDP relaxation method for solving FSIPP problems, we compare it with
the numerical method called {\itshape adaptive convexification
algorithm}
\footnote{Its code named SIPSOLVER is available at
\url{https://kop.ior.kit.edu/791.php}} (ACA for short)
\cite{ACA2007,ACA2012} for the following reasons. On the one hand,
if $g(x)$ is not a constant function, then the FSIPP problem
\eqref{FMP} is usually not convex.
Hence,  
numerical methods in the literature for convex SIP problems
\cite{Goberna2017,Hettich1993,Still2007} are not
appropriate for \eqref{FMP}. On the other hand, most of the
existing numerical methods for SIP require the index set to be box-shaped, while the
ACA method can solve SIP problems with arbitrary, not necessarily
box-shaped, index sets (as $\Y$ in \eqref{FMP} is). 
The ACA method can deal with general SIP problems (the involved
functions are not necessarily polynomials) by two procedures.
The first phase is to find a consistent initial
approximation of the SIP problem with a reduced outer approximation of
the index set. The second phase is to compute an {\itshape
	$\varepsilon$-stationary point} of the SIP problem by adaptively
	constructing convex relaxations of the lower level problems.
All numerical experiments in the sequel were carried out on a PC with  
two 64-bit Intel Core i5 1.3 GHz CPUs and 8G RAM.

\begin{example}\label{ex::sym}{\rm
In order to construct an illustrating example which is not in the
special cases studied in the next section, we consider the following
two convex but not s.o.s-convex polynomials where $h_1$ is given in
\cite[(4)]{Ahmadi2012} and $h_2$ is given in \cite[(5.2)]{APgap}
\begin{equation}\label{eq::nonsosconvex}
	\begin{aligned}
		h_1(x_1,x_2,x_3)=&32x_1^8+118x_1^6x_2^2+40x_1^6x_3^2+25x_1^4x_2^4
		-43x_1^4x_2^2x_3^2-35x_1^4x_3^4+3x_1^2x_2^4x_3^2\\
		&-16x_1^2x_2^2x_3^4+24x_1^2x_3^6+16x_2^8
		+44x_2^6x_3^2+70x_2^4x_3^4+60x_2^2x_3^6+30x_3^8.\\
		h_2(x_1,x_2)=&89-363x_1^4x_2+\frac{51531}{64}x_2^6-\frac{9005}{4}x_2^5+
	\frac{49171}{16}x_2^4+721x_1^2-2060x_2^3\\
	&-14x_1^3+\frac{3817}{4}x_2^2+363x_1^4-9x_1^5+77x_1^6+316x_1x_2+49x_1x_2^3\\
	&-2550x_1^2x_2-968x_1x_2^2+1710x_1x_2^4+794x_1^3x_2+\frac{7269}{2}x_1^2x_2^2\\
	&-\frac{301}{2}x_1^5x_2+\frac{2143}{4}x_1^4x_2^2+\frac{1671}{2}x_1^3x_2^3+\frac{14901}{16}x_1^2x_2^4-\frac{1399}{2}x_1x_2^5\\
	&-\frac{3825}{2}x_1^3x_2^2-\frac{4041}{2}x_1^2x_2^3-364x_2+48x_1.
				\end{aligned}
			\end{equation}
It can be verified by {\sf Yalmip} that both $h_1$ and $h_2$ are s.o.s polynomials. 
Let
\[
	p(x_1,x_2,y_1,y_2):=-1+h_1(y_1x_1-y_2x_2,y_2x_1+y_1x_2,1)/100+(y_1x_1-y_2x_2)-(y_2x_1+y_1x_2)
\]
and $f(x_1,x_2):=h_2(x_1-1,x_2-1)/10000$. Clearly, $f(x)$ and
$p(x,y)$ for all $y\in\RR^2$ are convex but not s.o.s-convex in $x$. 
Let $g(x_1,x_2):=-x_1^2-x_2^2+4$ and
$\psi(x_1,x_2):=x_1^2/2+2x_2^2-1$. 

Consider the FSIPP problem
\begin{equation}\label{eq::ex1}
		r^{\star}:=\min_{x\in\RR^2}\  \frac{f(x)}{g(x)}\quad
		\text{s.t.}\ \psi(x)\le 0, \ p(\bx,\by)\le 0,\ \ \forall \by\in \Y\subset\RR^2,
\end{equation}
where
\[
	\Y:=\{(y_1,y_2)\in\RR^2 \mid y_1\ge 0,\ y_2\ge 0,\
	y_1^2+y_2^2=1\}. 
\]
Geometrically, the feasible region $\K$ is constructed in the
following way:
first rotate the shape in the $(x_1,x_2)$-plane defined by 
$-1+h_1(x_1,x_2,1)/100-x_1+x_2\le 0$ continuously around the
origin by $90^\circ$ clockwise; then intersect the common area of
these shapes in this process with the ellipse defined by $\psi(x)\le
0$ (see Figure \ref{fig::exsym}). It is easy to see that {\sf (A1-4)}
hold for this problem. Let $R=2$ and $g^{\star}=1$. 
For the first order $k=4$, we get $r_4^{\dsdp}=0.0274$ and
$\mL_4^{\star}(x)/\mL_4^{\star}(1)=(0.7377, 0.6033)$.

As we have discussed before this example, now let us check that if
$u^{\star}:=(0.7377, 0.6033)$ is a satisfying solution to 
\eqref{eq::ex1} within the tolerance $\tau=10^{-3}$. We first solve
the problem \eqref{eq::pu} by Lasserre's SDP relaxations in  {\sf
GloptiPoly}. It turns out that Condition \ref{con::truncation} is
satisfied in Lasserre's relaxations of the first order. We obtain that
$p^{\star}=6.7654\times 10^{-5}$ and $\Lambda(u^{\star})=\{(0.775,
0.6315)\}$. Since $\psi(u^{\star})=4.2425\times 10^{-5}$, within the
tolerance $10^{-3}$, we can see that $u^{\star}$ is a feasible point
of \eqref{eq::ex1} and the constraints
\[
	p(x_1,x_2,0.775,0.6315)\le 0,\quad \psi(x_1,x_2)\le 0,
\]
are active at $u^{\star}$. Then, we solve the non-negative
least-squares problem \eqref{eq::lsq} by the command {\itshape lsqnonneg}
in Matlab. The result is $\omega=0.0000$, which shows that the KKT
condition \eqref{eq::KKT} holds at $u^{\star}$. Thus, $u^{\star}$ is a numerical
minimizer of \eqref{eq::csipu} and hence of \eqref{eq::ex1} within the
tolerance $10^{-3}$. The total
CPU time for the whole process is about 25 seconds.

To show the accuracy of
the solution, we draw some contour curves of $f/g$, including
the one where $f/g$ is the constant value
$f(u^{\star})/g(u^{\star})=0.0274$ (the blue curve), and mark the
point $u^{\star}$ by a red dot in Figure \ref{fig::exsym}. As we
can see, the blue curve is almost tangent to $\K$ at the point
$u^{\star}$, which illustrates the accuracy of $u^{\star}$.
\begin{figure}
	\centering
	\scalebox{0.5}{
		\includegraphics[trim=80 200 80 200,clip]{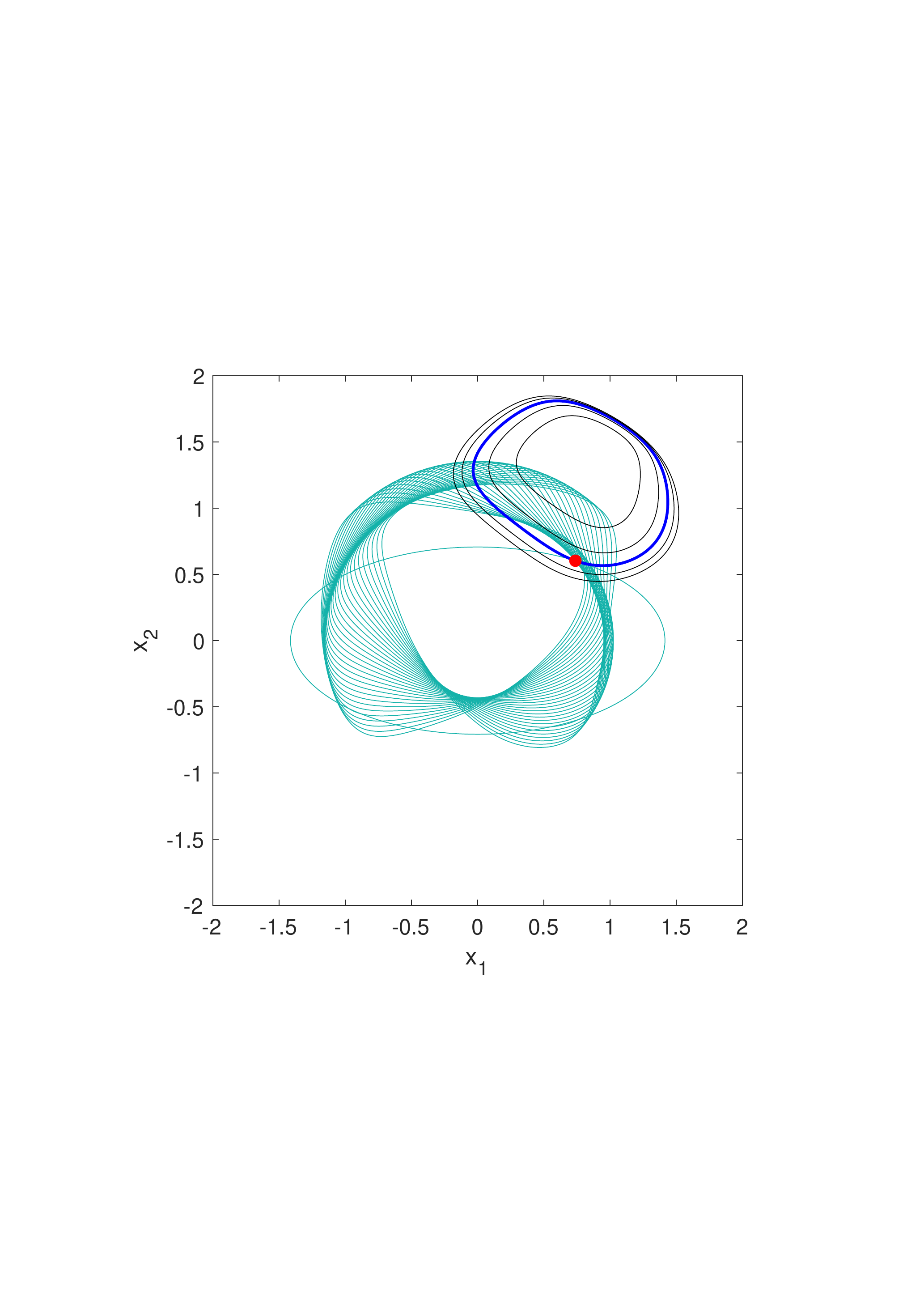}
	}
	\caption{The feasible set $\K$ and contour curves of $f/g$ in
		Example \ref{ex::sym}\label{fig::exsym} }
\end{figure}

Next, we apply the ACA method to \eqref{eq::ex1}.
It turns out that
the first phase of ACA to find a consistent initial approximation of
\eqref{eq::ex1} with a reduced outer approximation of $\Y$ always
failed. That is possibly because $\Y$ in \eqref{eq::ex1} has no
interior point
and the upper level problem is not convex (c.f. \cite{ACA2007,
ACA2012}). Then, we reformulate \eqref{eq::ex1} to the following
equivalent fractional semi-infinite programming problem involving
trigonometric functions and a single parameter $t$
\begin{equation}\label{eq::ex1r}
		\min_{x\in\RR^2}\  \frac{f(x)}{g(x)}\quad
		\text{s.t.}\ \psi(x)\le 0, \ p(\bx, \sin t, \cos t)\le 0,\ \
		\forall t\in [0,\pi/2].
\end{equation}
Then we solve \eqref{eq::ex1r} by the ACA method again. After a
successful phase I, the first 10 iterations of phase II to compute an
$\varepsilon$-stationary point of \eqref{eq::ex1r} run for about 22
minutes and produced a feasible point $(0.6530, 0.6272)$. The 15th
iteration of phase II produced a feasible point $(0.7374, 0.6034)$ and
the accumulated CPU time is about 40 minutes. The algorithm did not
reach its default termination criterion within an hour. 
}
\qed
\end{example}

\section{Special cases with exact or finitely convergent SDP
relaxations}\label{sec::SDP4}

In this section, we study some cases of the FSIPP problem \eqref{FMP}, 
for which we
can derive SDP relaxation which is exact or has finite convergence
and can extract at least one minimizer of \eqref{FMP}. The reason for
this concern is due to some applications of the FSIPP problem where
exact optimal values and minimizers are required, see Section \ref{sec::mfsipp}.

Recall the reformulations \eqref{eq::f*r} and \eqref{eq::f*rdual}.
Letting $\varepsilon=0$ in Theorem \ref{th::main} implies that
\begin{theorem}\label{th::mainOP}
	Suppose that {\sf (A1-3)} hold and the
	cones $\cs$ and $\cy$ satisfy the following conditions$:$ 
$\cy\subseteq\mathscr{P}(\Y)$, 
	$L_{f,g}(x,\mu^{\star},\eta^{\star})\in\cs,$
	there exists some $u^{\star}\in\bs$ such that
	$-p(u^{\star},y)\in\cy$ and $h(u^{\star})\ge 0$
	for any $h(x)\in\cs$.
	\begin{enumerate}[\upshape (i)]
		\item We have $r^{\psdp}=r^{\dsdp}=r^{\star}$.
\item If $\mathscr{L}^{\star}$ is a minimizer of \eqref{eq::f*rdual}
	such that the restriction $\mL^{\star}|_{\RR[x]_{\mathbf{d}}}$
	admits a representing nonnegative measure $\nu,$ then 
	\[
		\frac{\mathscr{L}^{\star}(\bX)}{\mathscr{L}^{\star}(1)}=
		\frac{1}{\int d\nu}\left(\int x_1 d\nu,\ldots,\int x_m
		d\nu\right)\in \bs. 
	\]
	\end{enumerate}
\end{theorem}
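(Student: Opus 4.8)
The plan is to obtain Theorem \ref{th::mainOP} as the special case $\varepsilon=0$ of Theorem \ref{th::main}, so the whole argument reduces to matching hypotheses and reading off the conclusions. First I would observe that, by the definition \eqref{eq::Se}, $\bs_0=\{x\in\K\mid f(x)/g(x)\le r^{\star}\}$, and since $r^{\star}$ is the optimal value of \eqref{FMP} this set is precisely the optimal solution set $\bs$. Hence the point $u^{\star}\in\bs$ supplied in the hypothesis can play the role of the point $u^{(\varepsilon)}\in\bs_{\varepsilon}$ required in Theorem \ref{th::main} with $\varepsilon=0$.

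Next I would check that every hypothesis of Theorem \ref{th::main} holds in the present setting: {\sf (A1-3)} are assumed; $\cy\subseteq\mathscr{P}(\Y)$ is assumed verbatim; $-p(u^{\star},y)\in\cy$ and $h(u^{\star})\ge 0$ for all $h(x)\in\cs$ are assumed verbatim; and the condition ``$L_{f,g}(x,\mu^{\star},\eta^{\star})+\varepsilon g(x)\in\cs$'' appearing in Theorem \ref{th::main}(i) degenerates, at $\varepsilon=0$, to exactly ``$L_{f,g}(x,\mu^{\star},\eta^{\star})\in\cs$'', which is part of our hypothesis. With this dictionary in place, Theorem \ref{th::main}(i) gives $r^{\star}-0\le r^{\psdp}\le r^{\dsdp}\le r^{\star}+0$, i.e. $r^{\psdp}=r^{\dsdp}=r^{\star}$, which is part (i). For part (ii), Theorem \ref{th::main}(ii) states that if $\mathscr{L}^{\star}$ is a minimizer of \eqref{eq::f*rdual} whose restriction $\mL^{\star}|_{\RR[x]_{\mathbf{d}}}$ admits a representing nonnegative measure $\nu$, then $\mathscr{L}^{\star}(\bX)/\mathscr{L}^{\star}(1)\in\bs_{\varepsilon}$, together with the displayed barycentric formula; specializing $\varepsilon=0$ and using $\bs_0=\bs$ yields $\mathscr{L}^{\star}(\bX)/\mathscr{L}^{\star}(1)\in\bs$, which is exactly part (ii).

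The only point that deserves a second look -- the ``main obstacle'', such as it is -- is confirming that the proof of Theorem \ref{th::main} remains valid at $\varepsilon=0$, i.e. that it never divides by $\varepsilon$ or otherwise uses strict positivity of $\varepsilon$. Inspecting that proof, one sees it uses only $\varepsilon\ge 0$ together with weak duality for the pair \eqref{eq::f*r}--\eqref{eq::f*rdual}, the feasibility of the functional $\mL'$ built from $u^{(\varepsilon)}$, the fact that $\mL^{\star}(1)=\int d\nu>0$ (lest $\mL^{\star}(g)=0$), and Jensen's inequality applied to the convex polynomials $p(\cdot,y)$ and $\psi_j$ -- none of which is affected by taking $\varepsilon=0$. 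Hence the specialization is legitimate and the corollary follows immediately.
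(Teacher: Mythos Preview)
Your proposal is correct and matches the paper's own argument exactly: the paper introduces Theorem~\ref{th::mainOP} with the single sentence ``Letting $\varepsilon=0$ in Theorem~\ref{th::main} implies that'', and your write-up simply spells out this specialization in detail, including the identification $\bs_0=\bs$ and the observation that Theorem~\ref{th::main} is stated for $\varepsilon\ge 0$ so that no step of its proof breaks at $\varepsilon=0$.
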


Next, we specify four cases of the FSIPP problem, for which we can
choose suitable cones $\cs$ and $\cy$ with sum-of-squares structures
and satisfy conditions in Theorem \ref{th::mainOP}.

\subsection{Four cases}
Recall the s.o.s-convexity introduced in Section \ref{sec::pre} and
consider
\begin{enumerate}[\text{\sf Case} 1.]
	\item\label{case1} (i) $n=1$ and $\Y=[-1,1]$; (ii) $f(\bX)$,
		$-g(x)$, $\psi_i(x), i=1,\ldots,s$, and
		$p(\bX,y)\in\RR[\bX]$ for every
		$y\in \Y$ are all {s.o.s-convex} in $\bX$.
	\item\label{case2} (i) $n>1$, $\Y=\{y\in\RR^n\mid\phi(y)\ge 0\}$
		where $\deg(\phi(y))=2$, $\phi(\bar{y})>0$ for some
		$\bar{y}\in\RR^n$; (ii) $\deg_y(p(x,y))=2$; (iii) $f(\bX)$, $-g(x)$,
		$\psi_i(x), i=1,\ldots,s$, and $p(\bX,y)\in\RR[\bX]$ for
		every $y\in \Y$ are all {s.o.s-convex} in $\bX$.
\end{enumerate}

Let $d_{\bX}=\deg_{\bX}(p(\bX,\bY))$ and
$d_{\bY}=\deg_{\bY}(p(\bX,\bY))$.
For {\sf Case} 1 and {\sf Case} 2, we make the following choices of $\cs$
and $\cy$ in the reformulations \eqref{eq::f*r} and
\eqref{eq::f*rdual}:
\begin{enumerate}[\text{In \sf Case} 1:]
	\item Let
		\begin{equation}\label{eq::crm}
			\cs=\Sigma^2[x]\cap\RR[x]_{2\mathbf{d}},
		\end{equation}
		and
		\begin{equation}\label{eq::cs}
				\cy=\left\{\theta_0+\theta_1(1-y_1^2)\
					\Big|\
					\begin{aligned}
						&\theta_0,\theta_1\in\Sigma^2[y_1],
						\deg(\theta_0)\le 2\lceil d_y/2\rceil, \\
				&\deg(\theta_1(1-y_1^2))\le 2\lceil d_y/2\rceil
			\end{aligned}\right\}.
		\end{equation}
	\item Let $\cs$ be defined as in \eqref{eq::crm}
		and
		\begin{equation}\label{eq::cs2}
			\cy=\{\theta+\lambda \phi\mid \lambda\ge 0,\
			\theta\in\Sigma^2[y],\ \deg(\theta)\le 2\}.
		\end{equation}
\end{enumerate}

Recall Proposition \ref{prop::eq} and Remark \ref{rk::eq}. In {\sf
Case} 1 and 2, we in fact choose $\mathcal{X}=\RR^m$ and
$\Sigma^2[x]\cap\RR[x]_{2\mathbf{d}}$ as the approximation of
$\mathscr{P}(\mathcal{X})$. 
In each case, we can reduce \eqref{eq::f*r} and
\eqref{eq::f*rdual} to a pair of primal and dual SDP problems.

\begin{lemma}\label{lem::sosconvex2}
	Under {\sf (A1-2)}$,$	if $f(\bX)$, $-g(x),$ $\psi_i(x),\
	i=1,\ldots,s,$ and $p(\bX,y)\in\RR[\bX]$ for
		every $y\in \Y$ are all {s.o.s-convex} in $\bX,$ then the
		Lagrangian $L_{f,g}(x,\mu,\eta)$ is s.o.s-convex for any
		$\mu\in\mathcal{M}(\Y)$ and $\eta\in\RR_+^s.$
\end{lemma}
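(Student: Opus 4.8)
The plan is to show that $L_{f,g}(\,\cdot\,,\mu,\eta)$ is a finite sum of s.o.s-convex polynomials in $x$, using that the class of s.o.s-convex polynomials is a convex cone. First I would record the elementary closure properties: if $h_1,h_2\in\RR[x]$ are s.o.s-convex with $\nabla^2h_i=H_i^TH_i$, then
\[
  \nabla^2(h_1+h_2)=\begin{pmatrix}H_1\\ H_2\end{pmatrix}^T\begin{pmatrix}H_1\\ H_2\end{pmatrix},
\]
so $h_1+h_2$ is s.o.s-convex; for $\lambda\ge 0$ we have $\nabla^2(\lambda h)=(\sqrt{\lambda}H)^T(\sqrt{\lambda}H)$, so $\lambda h$ is s.o.s-convex; and any affine polynomial is s.o.s-convex, its Hessian being the zero matrix.

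With these in hand, three of the four terms of $L_{f,g}$ are immediate. The term $f(x)$ is s.o.s-convex by hypothesis; $\sum_{j=1}^s\eta_j\psi_j(x)$ is a nonnegative combination of the s.o.s-convex polynomials $\psi_j$, hence s.o.s-convex; and $-r^{\star}g(x)$ is handled via {\sf (A2)}: if $g$ is affine then $-r^{\star}g$ is affine, hence s.o.s-convex, while in the other alternative $f\ge 0$ and $g>0$ on $\K$ force $r^{\star}=f(u^{\star})/g(u^{\star})\ge 0$ for $u^{\star}\in\bs$, so $-r^{\star}g=r^{\star}(-g)$ is a nonnegative multiple of the s.o.s-convex polynomial $-g$.

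The only genuinely nontrivial term is $\int_{\Y}p(x,y)\,d\mu(y)$, and the key step will be to replace the measure $\mu$ by a finitely atomic one that reproduces this integral exactly. Writing $p(x,y)=\sum_{|\beta|\le d_y}p_\beta(x)\,y^\beta$ with $p_\beta\in\RR[x]$ and $d_y=\deg_y p$, only the finitely many moments $\int_{\Y}y^\beta\,d\mu(y)$, $|\beta|\le d_y$, are relevant. Since $\Y$ is compact by {\sf (A1)} and $\mu\in\mathcal{M}(\Y)$, Tchakaloff's theorem yields points $y_1,\dots,y_N\in\supp{\mu}\subseteq\Y$ and weights $w_1,\dots,w_N\ge 0$ with $\int_{\Y}y^\beta\,d\mu(y)=\sum_{i=1}^N w_i\,y_i^\beta$ for all $|\beta|\le d_y$, whence
\[
  \int_{\Y}p(x,y)\,d\mu(y)=\sum_{|\beta|\le d_y}p_\beta(x)\int_{\Y}y^\beta\,d\mu(y)=\sum_{i=1}^N w_i\sum_{|\beta|\le d_y}p_\beta(x)\,y_i^\beta=\sum_{i=1}^N w_i\,p(x,y_i).
\]
Each $y_i$ lies in $\Y$, so $p(\,\cdot\,,y_i)$ is s.o.s-convex by hypothesis, and therefore $\sum_{i=1}^N w_i\,p(\,\cdot\,,y_i)$ is s.o.s-convex. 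Combining the four pieces through the closure properties gives the claim.

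The main obstacle is precisely this moment-reduction step: one must invoke the existence of a finitely supported representing measure for the truncated moment sequence of $\mu$ on the compact set $\Y$. If one prefers to avoid Tchakaloff's theorem, an alternative is to observe that the s.o.s-convex polynomials in $\RR[x]_{2d_x}$ (with $d_x=\deg_x p$) form a closed convex cone and that the integral against a nonnegative measure of the continuous cone-valued map $y\mapsto p(\,\cdot\,,y)$ remains in the cone, by a Hahn--Banach separation argument in the finite-dimensional space $\RR[x]_{2d_x}$; the remaining terms are treated exactly as above.
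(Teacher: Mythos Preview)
Your proof is correct. The paper's argument also reduces everything to showing that $\int_{\Y}p(x,y)\,d\mu(y)$ is s.o.s-convex, but instead of invoking Tchakaloff's theorem to obtain an exact finitely atomic representation of the relevant moments, it approximates $\mu$ weakly by a sequence of atomic measures $\mu_k\in\mathcal{M}(\Y)$; each $\int_{\Y}p(\cdot,y)\,d\mu_k(y)$ is then a finite nonnegative combination of s.o.s-convex polynomials, and the conclusion follows by passing to the limit using that the cone of s.o.s-convex polynomials in $\RR[x]_{d_x}$ is closed. Your Tchakaloff route is more direct in that it sidesteps any limit or closedness argument, at the price of appealing to a somewhat more specialized moment-theoretic result; the alternative you sketch at the end (closed cone plus separation) is essentially the paper's strategy. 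You are also more careful than the paper in spelling out, via the case distinction in {\sf (A2)}, why $-r^{\star}g(x)$ is s.o.s-convex, a point the paper absorbs into the word ``obviously''.
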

\begin{proof}
Obviously, we only need to prove that $\int_{\Y} p(x,y)d\mu(y)$ is
s.o.s-convex under {\sf (A1-2)}. Note that there is a sequence of
atomic measures
$\{\mu_k\}\subseteq\mathcal{M}(\Y)$ which is weakly convergent to
$\mu$, i.e., $\lim_{k\rightarrow\infty} \int_{\Y}
h(y)d\mu_k(y)=\int_{\Y} h(y)d\mu(y)$ holds for every bounded
continuous real function $h(y)$
on $\Y$ (c.f. \cite[Example 8.1.6 (i)]{Bogachev}). It is obvious that
$\int_{\Y} p(x,y)d\mu_k(y)\in\RR[x]_{d_x}$ is s.o.s-convex for each $k$.
Since the convex cone of s.o.s-convex polynomials in $\RR[x]_{d_x}$ is
closed (c.f. \cite{Ahmadi2012}), the conclusion follows.
\end{proof}

\begin{theorem}\label{th::sdp12}
	In {\sf Cases} $1$-$2:$
	under {\sf (A2)}$,$ 
	the following holds.

		\begin{enumerate}[\upshape (i)]
			\item 
				$r^{\dsdp}=r^{\star}$ and
				$\frac{\mL^{\star}(x)}{\mL^{\star}(1)}\in\bs$ 
				where $\mL^{\star}$ be a minimizer of
				\eqref{eq::f*rdual} which always exists.
			\item If {\sf (A3)} holds$,$ then $r^{\psdp}=r^{\star}$
				and it is attainable.
	\end{enumerate}
\end{theorem}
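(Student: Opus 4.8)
The plan is to show that the cones $\cs=\Sigma^2[x]\cap\RR[x]_{2\mathbf{d}}$ and the $\cy$'s of \eqref{eq::cs} and \eqref{eq::cs2} satisfy the hypotheses of Theorem~\ref{th::main}/\ref{th::mainOP} with $\varepsilon=0$, and then — this is the essential twist for part (i), where (A3) is \emph{not} assumed — to replace the ``representing measure'' hypothesis of Theorem~\ref{th::mainOP}(ii) by the extended Jensen inequality for s.o.s-convex polynomials (Proposition~\ref{prop::Jensen}). The choice of $\cs$ and the s.o.s-convexity of all the data in {\sf Cases}~1--2 are exactly what makes this substitution possible.

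\emph{Cone bookkeeping.} Since $\cs$ consists of s.o.s polynomials, every $h\in\cs$ is globally nonnegative, so ``$h(u^{\star})\ge0$ for all $h\in\cs$'' is automatic. The inclusion $\cy\subseteq\mathscr{P}(\Y)$ is immediate, as $1-y_1^2\ge0$ on $[-1,1]$ in {\sf Case}~1 and $\phi\ge0$ on $\Y$ in {\sf Case}~2. For a fixed $u^{\star}\in\bs$ one has $-p(u^{\star},y)\ge0$ on $\Y$, and the certificate $-p(u^{\star},y)\in\cy$ is obtained as follows: in {\sf Case}~1, from the classical representation of a univariate polynomial nonnegative on $[-1,1]$ as $\theta_0+\theta_1(1-y_1^2)$ with $\theta_0,\theta_1$ s.o.s (when $d_y$ is odd one first takes a representation in the generators $1\pm y_1$ and rewrites it via $1\pm y_1=\tfrac{1}{2}(1\pm y_1)^2+\tfrac{1}{2}(1-y_1^2)$, which accounts for the degree bound $2\lceil d_y/2\rceil$ in \eqref{eq::cs}); in {\sf Case}~2, $-p(u^{\star},\cdot)$ and $\phi$ are quadratic with $\phi(\bar y)>0$, so the $S$-lemma (Theorem~\ref{th::slemma}) gives $\lambda\ge0$ with $-p(u^{\star},y)-\lambda\phi(y)\ge0$ on $\RR^n$, and Hilbert's theorem (Theorem~\ref{th::hilbert}(ii)) turns this into an s.o.s of degree $\le2$, whence $-p(u^{\star},y)=\theta+\lambda\phi\in\cy$. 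Finally, whenever (A3) holds, $L_{f,g}(x,\mu^{\star},\eta^{\star})$ is s.o.s-convex by Lemma~\ref{lem::sosconvex2}, vanishes together with its gradient at $u^{\star}$ by Proposition~\ref{prop::red}, hence is s.o.s by Lemma~\ref{lem::sosconvex}, and since its degree is at most $\mathbf{d}$ it lies in $\cs$.

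\emph{Part (i).} Let $\mL$ be feasible for \eqref{eq::f*rdual}. Cauchy--Schwarz on the s.o.s cone forces $\mL(1)>0$ (else $\mL(x^\alpha)=0$ for all $|\alpha|\le\mathbf{d}$, contradicting $\mL(g)=1$). Put $z:=\mL(x)/\mL(1)$ and $\bar\mL:=\mL/\mL(1)$, a unital functional nonnegative on $\Sigma^2[x]\cap\RR[x]_{2\mathbf{d}}$. Proposition~\ref{prop::Jensen} (with $d=\mathbf{d}$) applied to the s.o.s-convex polynomials $\psi_j$, $p(\cdot,y)$ $(y\in\Y)$, $f$ and $-g$ gives $\bar\mL(\psi_j)\ge\psi_j(z)$, $\bar\mL(p(x,y))\ge p(z,y)$, $\bar\mL(f)\ge f(z)$ and $\bar\mL(g)\le g(z)$. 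The constraints $\mL(\psi_j)\le0$ and $-\mL(p(x,y))\in\cy\subseteq\mathscr{P}(\Y)$ then yield $\psi_j(z)\le0$ and $p(z,y)\le0$ on $\Y$, i.e. $z\in\K$, while $g(z)\ge\bar\mL(g)=1/\mL(1)>0$. Using $\mL(f)=\mL(f)/\mL(g)=\bar\mL(f)/\bar\mL(g)$ together with a short case split on (A2) ($f(z)\ge0$ in the first alternative, $\bar\mL(g)=g(z)$ for $g$ affine in the second) gives $\mL(f)\ge f(z)/g(z)\ge r^{\star}$, so $r^{\dsdp}\ge r^{\star}$. For the reverse inequality and attainment, the point-evaluation functional defined by $\mL^{\star}(x^\alpha):=(u^{\star})^\alpha/g(u^{\star})$ with $u^{\star}\in\bs$ is feasible for \eqref{eq::f*rdual} by the cone bookkeeping above and has $\mL^{\star}(f)=r^{\star}$; hence $r^{\dsdp}=r^{\star}$ and a minimizer exists. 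Running the chain $r^{\star}=\mL^{\star}(f)\ge f(z)/g(z)\ge r^{\star}$ for an arbitrary minimizer forces $f(z)/g(z)=r^{\star}$, i.e. $\mL^{\star}(x)/\mL^{\star}(1)\in\bs$.

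\emph{Part (ii).} Now add (A3): take $\mu^{\star},\eta^{\star}$ from (A3) and set $\mH^{\star}(y^\beta):=\int_{\Y}y^\beta\,d\mu^{\star}(y)$, which is nonnegative on $\mathscr{P}(\Y)\supseteq\cy$, so $\mH^{\star}\in(\cy)^*$. The constraint polynomial of \eqref{eq::f*r} at $(\rho,\mH,\eta)=(r^{\star},\mH^{\star},\eta^{\star})$ equals $L_{f,g}(x,\mu^{\star},\eta^{\star})$, which lies in $\cs$ by the bookkeeping above, so $(r^{\star},\mH^{\star},\eta^{\star})$ is feasible and $r^{\psdp}\ge r^{\star}$; weak duality and part (i) give $r^{\psdp}\le r^{\dsdp}=r^{\star}$, hence $r^{\psdp}=r^{\star}$ and it is attained. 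I expect the main obstacle to be part (i): since (A3) is unavailable there, Theorem~\ref{th::main}/\ref{th::mainOP} cannot be applied directly, and the real content is that s.o.s-convexity lets Jensen's inequality do the work of an honest representing measure; the odd-degree case of the univariate representation in {\sf Case}~1 is only a secondary technical point.
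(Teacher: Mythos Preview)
Your proposal is correct and follows essentially the same approach as the paper's own proof: the same representation results ($[-1,1]$ certificates in {\sf Case}~1, the $S$-lemma plus Hilbert in {\sf Case}~2) to place $-p(u^{\star},\cdot)$ in $\cy$; the same moment-matrix/Cauchy--Schwarz reason for $\mL^{\star}(1)>0$; the same use of the extended Jensen inequality for s.o.s-convex polynomials (Proposition~\ref{prop::Jensen}) to replace an honest representing measure in part~(i); and the same Lemma~\ref{lem::sosconvex2}\,+\,Proposition~\ref{prop::red}\,+\,Lemma~\ref{lem::sosconvex} chain for $L_{f,g}\in\cs$ in part~(ii). Your write-up is in fact a bit more explicit than the paper in two places --- the odd-$d_y$ bookkeeping for \eqref{eq::cs} and the (A2) case split behind $\bar\mL(f)/\bar\mL(g)\ge f(z)/g(z)$ --- which the paper only gestures at.
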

\begin{proof}
	In {\sf Case} 1, by the representations of univariate polynomials
	nonnegative on an interval (c.f. \cite{Laurent_sumsof,PR2000}), we
	have $-p(x,y)\in\cy$ for each $x\in\K$. In {\sf Case} 2, by the
	$S$-lemma and Hilbert's theorem, we also have
	$-p(x,y)\in\cy$ for each $x\in\K$.
	For any $u^{\star}\in\bs$,  the linear functional $\mL'\in(\RR[x])^*$ such that
$\mL'(x^\alpha)=\frac{(u^{\star})^\alpha}{g(u^{\star})}$ for each
$\alpha\in\N^m$, is feasible to \eqref{eq::f*rdual}. Hence,
$r^{\psdp}\le r^{\dsdp}\le r^{\star}$ by the weak duality.

(i) Let $\mL^{\star}$ be a minimizer of \eqref{eq::f*rdual}, then
$\mL^{\star}(1)>0$. In fact, $\mL^{\star}(1)\ge 0$ since
$\mL^{\star}\in(\Sigma^2[x]\cap\RR[x]_{2\mathbf{d}})^*$. If
$\mL^{\star}(1)=0$, then by the positive semidefiniteness of the
associated moment matrix of $\mL^{\star}$, we have $\mL^{\star}(x^{\alpha})=0$
for all $\alpha\in\N^m_{\mathbf{d}}$, which contradicts the equality
$\mL^{\star}(g)=1$. 
As $\psi_1(x),\ldots,\psi_s(x)$, $p(\bX,y)\in\RR[\bX]$
 for every $y\in \Y$ are all s.o.s-convex in $\bX$, similar to the
 proof of Theorem \ref{th::main} (ii), it is easy to see that
 $\frac{\mL^{\star}(x)}{\mL^{\star}(1)}\in\K$ due to Proposition
 \ref{prop::Jensen}.
 Since $f(\bX)$ and $-g(x)$ are also s.o.s-convex, under {\sf (A2)},
we have
\[
	r^{\star}\le\frac{f\left(\frac{\mL^{\star}(x)}{\mL^{\star}(1)}\right)}{g\left(\frac{\mL^{\star}(x)}{\mL^{\star}(1)}\right)}
	\le
	\frac{\frac{1}{\mL^{\star}(1)}\mL^{\star}(f)}{\frac{1}{\mL^{\star}(1)}\mL^{\star}(g)}=\mL^{\star}(f)=r^{\dsdp}\le r^{\star}.
\]
It means that $r^{\dsdp}=r^{\star}$ and $\frac{\mL^{\star}(x)}{\mL^{\star}(1)}$ is minimizer of \eqref{FMP}.
Clearly, for any $u^{\star}\in\bs$, the linear functional
$\mL'\in(\RR[x]_{2\mathbf{d}})^*$ such that
$\mL'(x^\alpha)=\frac{(u^{\star})^\alpha}{g(u^{\star})}$
for each $\alpha\in\N^m_{2\mathbf{d}}$ is a minimizer of \eqref{eq::f*rdual}. 

	(ii)
	By Lemma \ref{lem::sosconvex2}, Proposition \ref{prop::red} and
	Lemma \ref{lem::sosconvex},
	$L_{f,g}(x,\mu^{\star},\eta^{\star})\in\cs$ in both cases.
	Thus, $r^{\psdp}=r^{\star}$ due to Theorem \ref{th::mainOP} (i)
	and is attainable at $(r^{\star}, \mH, \eta^{\star})$ where
	$\mH\in(\RR[y])^*$ satisfies that $\mH(y^\beta)=\int_{\Y} y^\beta
	d\mu^{\star}(\by)$ for any $\beta\in\N^n$.
\end{proof}

\vskip 5pt

Now we consider another two cases of the FSIPP problem \eqref{FMP}:
\begin{enumerate}[\text{\sf Case} 1.]
		\setcounter{enumi}{2}
	\item (i) $n=1$ and $\Y=[-1,1]$; (ii) The Hessian $\nabla^2
		f(u^{\star})\succ 0$ at some $u^{\star}\in\bs.$
	\item (i) $n>1$, $\Y=\{y\in\RR^n\mid\phi(y)\ge 0\}$
		where $\deg(\phi(y))=2$, $\phi(\bar{y})>0$ for some
		$\bar{y}\in\RR^n$; (ii)
		$\deg_y(p(x,y))=2$; (iii) The Hessian $\nabla^2
		f(u^{\star})\succ 0$ at some $u^{\star}\in\bs$.
\end{enumerate}

Let $R>0$ be a real number satisfying \eqref{eq::rg} and
$q(x):=R^2-(x_1^2+\cdots+x_m^2)$. 
For an integer $k\ge\lceil \mathbf{d}/2\rceil$,
we make the following choices of $\cs$ and
$\cy$ in the reformulations \eqref{eq::f*r} and
\eqref{eq::f*rdual}
in {\sf Case} 3 and {\sf Case} 4:
\begin{enumerate}[\text{In \sf Case} 1:]
		\setcounter{enumi}{2}
	\item Replace $\cs$ by $\csk{k}=\qm_k(\{q\})$
		and let $\cy$ be defined as in \eqref{eq::cs}.
	\item Replace $\cs$ by $\csk{k}=\qm_k(\{q\})$
		and let $\cy$ be defined as in \eqref{eq::cs2}.
\end{enumerate}

Recall Proposition \ref{prop::eq} and Remark \ref{rk::eq}. In {\sf
Case} 3 and 4, we in fact choose $\mathcal{X}=\{x\in\RR^m\mid q(x)\ge
0\}$ and quadratic modules generated by $\{q\}$ as the approximation
of $\mathscr{P}(\mathcal{X})$. 
For a fixed $k$, in each case, denote the resulting problems of
\eqref{eq::f*r} and \eqref{eq::f*rdual} by $(\ref{eq::f*r}k)$ and
$(\ref{eq::f*rdual}k)$, respectively.  Denote by $r^{\psdp}_k$ and $r^{\dsdp}_k$ the
optimal values of $(\ref{eq::f*r}k)$ and $(\ref{eq::f*rdual}k)$.
We can reduce $(\ref{eq::f*r}k)$ and
$(\ref{eq::f*rdual}k)$ to a pair of primal and dual SDP problems.

\begin{theorem}\label{th::sdp34}
	In {\sf Cases} $3$-$4:$ under {\sf (A1-2)}$,$ 
	the following holds.
	\begin{enumerate}[\upshape (i)]
		\item For each $k\ge\lceil \mathbf{d}/2\rceil,$
			$r^{\psdp}_k\le r^{\dsdp}_k\le r^{\star}$ and
			$r^{\dsdp}_k$ is attainale.
		\item For a minimizer $\mL_k^{\star}$ of
			$(\ref{eq::f*rdual}k)$, if 
			there exists an integer $k'\in[\lceil \mathbf{d}/2\rceil, k]$ such that
	\begin{equation}\label{eq::rk2}
		\rank\ \mathbf{M}_{k'-1}(\mL_k^{\star})=\rank\
		\mathbf{M}_{k'}(\mL_k^{\star}),
	\end{equation}
	then$,$ $r^{\dsdp}_k=r^{\star}$ and $\frac{\mL_k^{\star}(x)}{\mL_k^{\star}(1)}$ is
	minimizer of \eqref{FMP}$;$
\item 
	If {\sf (A3)}	holds$,$ then for $k$ large enough$,$
			$r_{k}^{\psdp}=r_{k}^{\dsdp}=r^{\star}$ and every minimizer $\mL_k^{\star}$ of
			$(\ref{eq::f*rdual}k)$
			satisfies the rank condition \eqref{eq::rk2} which
			certifies that $\frac{\mL_k^{\star}(x)}{\mL_k^{\star}(1)}$
			is a minimizer of \eqref{FMP}.
		\end{enumerate}
\end{theorem}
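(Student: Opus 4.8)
The plan is to treat the three items in turn; the choices $\cs=\csk{k}=\qm_k(\{q\})$ (with $q=R^2-\Vert x\Vert^2$) and $\cy$ as in \eqref{eq::cs} or \eqref{eq::cs2} are designed precisely so that Theorems~\ref{th::main} and~\ref{th::mainOP} can be fed these cones. For (i), weak duality gives $r_k^{\psdp}\le r_k^{\dsdp}$. For $r_k^{\dsdp}\le r^{\star}$ I would pick $u^{\star}\in\bs$ with $\Vert u^{\star}\Vert<R$ (available by \eqref{eq::rg}) and the scaled evaluation $\mL'(x^{\alpha})=(u^{\star})^{\alpha}/g(u^{\star})$: it lies in $(\qm_k(\{q\}))^{*}$ since $q(u^{\star})\ge0$ and $g(u^{\star})>0$, it satisfies $\mL'(g)=1$ and $\mL'(\psi_j)\le0$, and $-\mL'(p(x,y))=-p(u^{\star},y)/g(u^{\star})$ is nonnegative on $\Y$, hence lies in $\cy$: in Case~3 by the representation of univariate polynomials nonnegative on $[-1,1]$ together with the degree bound in \eqref{eq::cs} (cf.\ \cite{Laurent_sumsof,PR2000}), and in Case~4 by the $S$-lemma (Theorem~\ref{th::slemma}, applicable since $\deg_y p=\deg\phi=2$ and $\phi(\bar y)>0$) followed by Hilbert's theorem (Theorem~\ref{th::hilbert}) to exhibit the resulting nonnegative quadratic as a sum of squares. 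Thus $\mL'$ is feasible for $(\ref{eq::f*rdual}k)$ and $r_k^{\dsdp}\le\mL'(f)=r^{\star}$. For the attainment of $r_k^{\dsdp}$ I would argue by a compactness argument as in the proof of Theorem~\ref{th::asy}(ii): the ball constraint $q$ and \cite[Lemma~3]{CD2016} bound all moments $\mL(x^{\alpha})$, $|\alpha|\le2k$, by a multiple of $\mL(1)$, so the feasible set of $(\ref{eq::f*rdual}k)$ is closed, and, since the bound $g\ge g^{\star}$ of Section~\ref{sec::SDP} is no longer at hand, the coercivity of $f$ (Proposition~\ref{prop::sc}, from $\nabla^{2}f(u^{\star})\succ0$) together with $\mL(g)=1$ is used to keep a minimizing sequence in a compact set, so that a cluster point is an optimal solution.

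For (ii), the rank equality \eqref{eq::rk2} is exactly the Curto--Fialkow flat-extension condition (here $k_0=\lceil\deg q/2\rceil=1$). It forces $\mL_k^{\star}|_{\RR[x]_{2k'}}$ to admit a finitely atomic representing measure $\nu$, and since $\mL_k^{\star}\in(\qm_k(\{q\}))^{*}$ with $k\ge k'$ the localizing matrix $\mathbf{M}_{k'-1}(q\,\mL_k^{\star})$ is positive semidefinite, so $\nu$ is supported on the ball $\{q\ge0\}$ (cf.\ \cite{CFKmoment,LasserreHenrion}). As $k'\ge\lceil\mathbf{d}/2\rceil$, the identities $\int x^{\alpha}\,d\nu=\mL_k^{\star}(x^{\alpha})$ hold for all $|\alpha|\le\mathbf{d}$; in particular $\mL_k^{\star}(1)=\int d\nu>0$ (otherwise $\mL_k^{\star}(g)=0$, contradicting $\mL_k^{\star}(g)=1$). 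Writing $\bar x:=\mL_k^{\star}(x)/\mL_k^{\star}(1)$, the barycenter of the probability measure $\nu/\int d\nu$, which thus lies in $\{q\ge0\}$, convexity of $p(\cdot,y)$ and of each $\psi_j$ and ordinary Jensen's inequality give $p(\bar x,y)\le\mL_k^{\star}(p(x,y))/\mL_k^{\star}(1)\le0$ for every $y\in\Y$ (the last inequality because $-\mL_k^{\star}(p(x,y))\in\cy\subseteq\mathscr{P}(\Y)$) and $\psi_j(\bar x)\le0$, so $\bar x\in\K$. The computation from the proof of Theorem~\ref{th::main}(ii), valid under {\sf (A2)}, then yields $r_k^{\dsdp}=\mL_k^{\star}(f)=\frac{\int f\,d\nu}{\int g\,d\nu}\ge\frac{f(\bar x)}{g(\bar x)}\ge r^{\star}$; combined with (i) this forces $r_k^{\dsdp}=r^{\star}$ and equality throughout, so $f(\bar x)/g(\bar x)=r^{\star}$, i.e.\ $\bar x\in\bs$.

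For (iii), the extra hypothesis $\nabla^{2}f(u^{\star})\succ0$ enters through Proposition~\ref{prop::s}. By Proposition~\ref{prop::red}, $L:=L_{f,g}(\cdot,\mu^{\star},\eta^{\star})$ satisfies $L(u^{\star})=0$ and $\nabla L(u^{\star})=0$; under {\sf (A2)} $L$ is convex in $x$ (if $r^{\star}<0$ then $g$ is affine), and since the Hessian contributions of $-r^{\star}g$, of $\int p(\cdot,y)\,d\mu^{\star}(y)$ and of $\sum\eta_j^{\star}\psi_j$ are all positive semidefinite, $\nabla^{2}L(u^{\star})\succeq\nabla^{2}f(u^{\star})\succ0$, so by Proposition~\ref{prop::sc} $L$ is strictly convex with unique minimizer $u^{\star}$, $L\ge0$ on $\RR^m$, and $u^{\star}$ in the interior of $\mathcal{X}=\{q\ge0\}$ (as $\Vert u^{\star}\Vert<R$). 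Proposition~\ref{prop::s} then gives $L=\sigma_0+\sigma_1 q$ with $\sigma_0,\sigma_1\in\Sigma^{2}[x]$, hence $L\in\csk{k}$ for all $k$ beyond some $k_1$. For such $k$ the hypotheses of Theorem~\ref{th::mainOP} are met (with $u^{\star}$ as its distinguished minimizer, using the Case~3/4 representations of $-p(u^{\star},y)$ and the fact that every $h\in\qm_k(\{q\})$ is nonnegative at $u^{\star}\in\mathcal{X}$), so $r_k^{\psdp}=r_k^{\dsdp}=r^{\star}$; the primal optimum is attained at $(r^{\star},\mathscr{H}^{\star},\eta^{\star})$, where $\mathscr{H}^{\star}$ is the moment functional of $\mu^{\star}$, and the dual optimum by (i). Moreover $\nabla^{2}(f-r^{\star}g)(u^{\star})\succ0$, so $f-r^{\star}g$ is strictly convex and $\bs=\{u^{\star}\}$ is a singleton. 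Having finite convergence of the hierarchy together with primal attainment, dual attainment, no duality gap and finiteness of $\bs$, I would then invoke the argument of Proposition~\ref{prop::fc} (cf.\ \cite[Theorem~2.2]{NieFlatTruncation}) to conclude that for $k$ large every minimizer $\mL_k^{\star}$ of $(\ref{eq::f*rdual}k)$ satisfies the rank condition \eqref{eq::rk2}; by part (ii) this certifies $\mL_k^{\star}(x)/\mL_k^{\star}(1)\in\bs$.

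I expect the main obstacle to be the final step of (iii): Proposition~\ref{prop::fc} and the underlying flat-truncation theorem are stated for the ordinary Lasserre hierarchy of a polynomial optimization problem, whereas $(\ref{eq::f*rdual}k)$ is the dual of a conic reformulation carrying the additional linear moment constraints $\mL(g)=1$, $\mL(\psi_j)\le0$ and $-\mL(p(x,y))\in\cy$; one must either check that Nie's result applies to this enriched moment problem or recast the Case~3/4 problem as a genuine polynomial program over the ball so that the cited result applies verbatim. A secondary, more technical point is the attainment claim in (i): without the constraint $g\ge g^{\star}$ used in Section~\ref{sec::SDP}, the convenient bound $\mL(1)\le1/g^{\star}$ is unavailable, so the coercivity of $f$ must carry the boundedness argument there.
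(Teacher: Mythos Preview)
Your treatments of (i) and (ii) coincide with the paper's: the same feasibility witness $\mL'$, the same appeal to the univariate representation or the $S$-lemma for $-p(u^{\star},y)\in\cy$, and for (ii) the same use of the Curto--Fialkow flat-extension theorem feeding into Theorem~\ref{th::mainOP}(ii). On the attainment issue you flag in (i), the paper does not supply more than the reference to \cite[Lemma~3]{CD2016} and Theorem~\ref{th::asy}(ii), so your concern about the missing bound on $\mL(1)$ is legitimate but is not addressed there either.

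For (iii) you have correctly located the obstacle, and the paper resolves it by exactly your second suggestion---recasting as a genuine polynomial program---but the specific choice of problem is the content you are missing. The auxiliary problem is
\[
l^{\star}:=\min_{x}\ L_{f,g}(x,\mu^{\star},\eta^{\star})\quad\text{s.t.}\quad q(x)\ge 0,
\]
i.e.\ minimize the Lagrangian itself over the ball. One has $l^{\star}=0$, attained uniquely at $u^{\star}$, and since $L\in\qm_{k^{\star}}(\{q\})$ by Proposition~\ref{prop::s}, the standard Lasserre hierarchy for this problem has finite convergence with primal attainment and no gap; as $\{x:L(x)=l^{\star}\}=\{u^{\star}\}$ is finite, Proposition~\ref{prop::fc} applies verbatim and gives the flat-truncation condition for every minimizer of its moment relaxation. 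The bridge back to $(\ref{eq::f*rdual}k)$ is the observation that, once $r_k^{\dsdp}=r^{\star}$, the normalization $\mL_k^{\star}/\mL_k^{\star}(1)$ of any minimizer of $(\ref{eq::f*rdual}k)$ is feasible for that moment relaxation (it lies in $(\qm_k(\{q\}))^{*}$ with unit mass) and is in fact optimal there, because
\[
\frac{\mL_k^{\star}(L)}{\mL_k^{\star}(1)}
=\frac{1}{\mL_k^{\star}(1)}\Bigl(\mL_k^{\star}(f)-r^{\star}\mL_k^{\star}(g)
+\int_{\Y}\mL_k^{\star}(p(x,y))\,d\mu^{\star}(y)
+\sum_{j}\eta_j^{\star}\mL_k^{\star}(\psi_j)\Bigr)\le 0=l^{\star},
\]
using $\mL_k^{\star}(f)=r^{\star}$, $\mL_k^{\star}(g)=1$, $\mL_k^{\star}(p(\cdot,y))\le 0$ on $\Y$ (since $-\mL_k^{\star}(p(x,y))\in\cy\subseteq\mathscr{P}(\Y)$) and $\mL_k^{\star}(\psi_j)\le 0$. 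The rank condition for the auxiliary relaxation is then literally \eqref{eq::rk2} for $\mL_k^{\star}$. This is the missing ingredient in your sketch.
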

\begin{proof}
	(i) As proved in Theorem \ref{th::sdp12}, we have $-p(u^{\star},y)\in\cy$ for
	every $u^{\star}\in\bs\subset\K$ in both cases and hence $r^{\psdp}_k\le
	r^{\dsdp}_k\le r^{\star}$. Due to the form of $q(x)$, the
	attainment of $r^{\dsdp}_k$ follows from \cite[Lemma 3]{CD2016} as
	proved in Theorem \ref{th::asy} (ii). 

	(ii) From the proof of Theorem \ref{th::asy} (iii), we can see that
	$\mL_k^{\star}(1)>0$. By \cite[Theorem 1.1]{CFKmoment}, 
	\eqref{eq::rk2} implies that the restriction
$\mL_k^{\star}|_{\RR[x]_{2k'}}$ has an atomic representing measure
supported on the set $\K':=\{\bx\in\RR^m\mid q(x)\ge 0\}$.
Then, the conclusion follows by Theorem \ref{th::mainOP} (ii).

(iii)
Under {\sf (A1-3)}, consider the nonnegative Lagrangian $L_{f,g}(x,\mu^{\star},\eta^{\star})$.
By Proposition \ref{prop::sc},
$L_{f,g}(x,\mu^{\star},\eta^{\star})$ is coercive and strictly convex on $\RR^m$.
Hence, by Proposition \ref{prop::red}, $\bs$ is a singleton set, say
$\bs=\{u^{\star}\}$, and $u^{\star}$ is the unique minimizer of
$L_{f,g}(x,\mu^{\star},\eta^{\star})$ on $\RR^m$.
Clearly, $u^{\star}$ is an interior point of $\K'$.
Then, by Proposition \ref{prop::s},
there exists
$k^{\star}\in\N$ such that
$L_{f,g}(x,\mu^{\star},\eta^{\star})\in\qm_{k^{\star}}(\{q\})$.
Thus, in both {\sf Case} 3 and {\sf Case} 4,
$L_{f,g}(x,\mu^{\star},\eta^{\star})\in\csk{k}$ for every
$k\ge k^{\star}$. Then, $r^{\psdp}_{k}=r^{\dsdp}_{k}=r^{\star}$
for each $k\ge k^{\star}$ by Theorem \ref{th::mainOP} (i).

Consider the polynomial optimization problem
\begin{equation}\label{eq::lp}
		l^{\star}:=\min_{\bx\in\RR^m}\ L_{f,g}(\bx,\mu^{\star},\eta^{\star})\quad
		\text{s.t.}\quad q(x)\ge 0.
\end{equation}
Then, $l^{\star}=0$ and is attained at $u^{\star}$. 
The $k$-th Lasserre's relaxation (see Section \ref{sec::pre}) for
\eqref{eq::lp} is 
\begin{equation}\label{eq::lpp}
l_k^{\dsdp}:=\inf_{\mL}\ \mL(L_{f,g}(x,\mu^{\star},\eta^{\star}))\quad 
\text{s.t.}\ \ \mL\in(\qm_{k}(\{q\}))^*,\ \mL(1)=1,
\end{equation}
and its dual problem is
\begin{equation}\label{eq::lpd}
l_k^{\psdp}:=\sup_{\rho\in\RR}\ \rho\quad \text{s.t.}\ \
L_{f,g}(x,\mu^{\star},\eta^{\star})-\rho\in\qm_k(\{q\}).
\end{equation}
We have shown that $l_{k^{\star}}^{\psdp}\ge 0$.
As the linear functional $\mL'\in(\RR[x]_{2k})^*$ with
$\mL'(x^{\alpha})=(u^{\star})^{\alpha}$ for each $\alpha\in\N^m_{2k}$ is
feasible to \eqref{eq::lpp}, along with the weak duality,
we have $l_k^{\psdp}\le l_k^{\dsdp}\le l^{\star}=0$, which means
$l_{k^{\star}}^{\psdp}=l_{k^{\star}}^{\dsdp}=0$.
Hence, Lasserre's hierarchy \eqref{eq::lpp} and \eqref{eq::lpd} have finite
convergence at the order $k^{\star}$ without dual gap and the optimal value
of \eqref{eq::lpd} is attainable. Moreover, recall that $u^{\star}$ is the unique
point in $\RR^m$ such that $L_{f,g}(u^{\star},\mu^{\star},\eta^{\star})=0=l^{\star}$.
Then by Proposition \ref{prop::fc}, the rank condition \eqref{eq::rk2} holds for
every minimizer of
\eqref{eq::lpp} for sufficiently large $k$. Let $\mL_k^{\star}$ be a
minimizer of $(\ref{eq::f*rdual}k)$ with $k\ge k^{\star}$. 
Now we show that  $\frac{\mL_k^{\star}}{\mL_k^{\star}(1)}$ is a
minimizer of \eqref{eq::lpp}.
Clearly, $\frac{\mL_k^{\star}}{\mL_k^{\star}(1)}$ is feasible to \eqref{eq::lpp}.
Because
\[
	\begin{aligned}
		0&=l^{\star}=l_k^{\dsdp}\le\frac{\mL_k^{\star}(L_{f,g}(x,\mu^{\star},\eta^{\star}))}{\mL_k^{\star}(1)}\\
		&=\frac{1}{\mL_k^{\star}(1)}\left(\mL_k^{\star}(f)-
		r^{\star}\mL_k^{\star}(g)
		+\int_{\Y}\mL_k^{\star}(p(\bx,y))d\mu^{\star}+\sum_{j=1}^s\eta_j\mL_k^{\star}(\psi_j)\right)\\
		 &=\frac{1}{\mL_k^{\star}(1)}\left(r^{\star}-r^{\star}+\int_{\Y}\mL_k^{\star}(p(\bx,y))d\mu^{\star}
		 +\sum_{j=1}^s\eta_j\mL_k^{\star}(\psi_j)\right)\le 0,\\
	\end{aligned}
\]
$\frac{\mL_k^{\star}}{\mL_k^{\star}(1)}$ is indeed a minimizer of \eqref{eq::lpp}. 
Therefore, for $k$ sufficiently large, the rank condition \eqref{eq::rk2} holds for
$\frac{\mL_k^{\star}}{\mL_k^{\star}(1)}$ and hence for $\mL_k^{\star}$.
\end{proof}

\begin{example}\label{ex::4cases}{\rm
Now we consider four FSIPP problems corresponding to the four cases
studied above.
	
		\vskip 5pt
\noindent{\sf Case} 1: 
		Consider the FSIPP problem
\begin{equation}\label{eq::case1} 
	\left\{ \begin{aligned}
		\min_{x\in\RR^2}\
		&\frac{(x_1+1)^2+(x_2+1)^2}{-x_1-x_2+1}\\ \text{s.t.}\
		& p(x,y)=x_1^2+y^2x_2^2+2yx_1x_2+x_1+x_2\le 0 ,\\ &
		\forall\ y\in [-1, 1].  \end{aligned} \right.
\end{equation} 
For any $y\in[-1,1]$, since $p(x,y)$ is of
degree $2$ and convex in $x$, it is s.o.s-convex in $x$.
Hence, the problem \eqref{eq::case1} is in {\sf Case} 1.  For
any $x\in\RR^2$ and $y\in[-1, 1]$, it is clear that \[
p(x,y)\le  x_1^2+x_2^2+2|x_1x_2|+x_1+x_2.  \] Then we can see
that the feasible set $\K$ can be defined only by two
constraints 
\[ p(x,1)=(x_1+x_2)(x_1+x_2+1)\le 0\ \text{and}\
p(x,-1)=(x_1-x_2)^2+x_1+x_2\le 0.  
\] 
That is, $\K$ is the area in $\RR^2$ enclosed by the ellipse
$p(x,-1)=0$ and the
two lines $p(x,1)=0$. Then, it is not hard to check that the
only global minimizer of \eqref{eq::case1} is
$u^{\star}=(-0.5,-0.5)$ and the minimum is $0.25$. Obviously,
{\sf (A2)} holds for \eqref{eq::case1}. Solving the single SDP
problem \eqref{eq::f*rdual} with the setting \eqref{eq::crm}
and \eqref{eq::cs}, we get
$\frac{\mL^{\star}(x)}{\mL^{\star}(1)}=(-0.5000, -0.5000)$
where $\mL^{\star}$ is the minimizer of \eqref{eq::f*rdual}.
The CPU time is $0.80$ seconds.
Then we solve \eqref{eq::case1} with ACA method. The algorithm
terminated successfully and returned the solution $(-0.5000,
-0.5000)$. The over CPU time is $4.25$ seconds.

		\vskip 5pt
\noindent{\sf Case} 2: 
Consider the FSIPP problem
\begin{equation}\label{eq::case2}
	\left\{
	\begin{aligned}
		\min_{x\in\RR^2}\ &\frac{(x_1-1)^2+(x_2-1)^2}{x_1+x_2}\\
		\text{s.t.}\ & \psi(x)=(x_1+x_2-1)(x_1+x_2-0.5)\le 0,\\
		&	p(x,y)=(y_1^2+y_2^2)x_1^2+(1/2-y_1y_2)x_2^2-1\le 0 ,\ \forall\
		y\in\Y,\\
		& \Y=\{y\in\RR^2\mid y_1^2+y_2^2\le 1\}. 
	\end{aligned}
	\right.
\end{equation}
It is easy to see that \eqref{eq::case2} is in {\sf Case} 2. 
For any $y\in\Y$, it holds that 
\[
	p(x,y)\le x_1^2+x_2^2-1=p(x,y^{(0)}), \quad
	y^{(0)}=\left(-\frac{\sqrt{2}}{2}, \frac{\sqrt{2}}{2}\right). 
\]
Hence, $\K$ is the part of the unit disc around the origin between the
two lines defined by $\psi(x)=0$ and the only global minimizer is
$u^{\star}=(0.5, 0.5)$. Obviously, {\sf (A2)} holds for
\eqref{eq::case2}. Solving the single SDP problem \eqref{eq::f*rdual}
with the setting \eqref{eq::crm} and \eqref{eq::cs2}, we get
$\frac{\mL^{\star}(x)}{\mL^{\star}(1)}=(0.4999, 0.5000)$ where
$\mL^{\star}$ is the minimizer of \eqref{eq::f*rdual}. The CPU time is
$1.20$ seconds. 
Then we solve \eqref{eq::case2} with ACA method. The algorithm
terminated successfully and returned the solution $(0.5000,
0.5000)$. The overall CPU time is $52.75$ seconds.

		\vskip 5pt
\noindent{\sf Case} 3: 
Recall the convex but not s.o.s-convex polynomial $h_2(x)$ in
\eqref{eq::nonsosconvex}.  Consider the FSIPP problem
\begin{equation}\label{eq::case3}
	\left\{
	\begin{aligned}
		\min_{x\in\RR^2}\ &\frac{(x_1-1)^2+(x_2-1)^2-2}{-x_1-x_2+4}\\
		\text{s.t.}\ & \psi(x)=x_1^2+x_2^2-4\le 0,\\
	&	p(x,y)=\frac{h_2(x_1,x_2)}{1000}-yx_1-y^2x_2-1\le 0 ,\ \forall\
		y\in [-1 ,1].
	\end{aligned}
	\right.
\end{equation}
Clearly, this problem is in {\sf Cases} $3$ and satisfies {\sf
(A1-2)}. We solve the SDP relaxation $(\ref{eq::f*rdual}k)$ 
with the setting $\csk{k}$ and $\cy$ aforementioned. 
We set the first order $k=4$ and check if
the rank condition \eqref{eq::rk2} holds. If not, check the next order. 
We have  $\rank
\mathbf{M}_{3}(\mL_4^{\star})=\rank \mathbf{M}_{4}(\mL_4^{\star})=1$
(within a tolerance $<10^{-8}$) for a minimizer $\mL_4^{\star}$ of
of $r^{\dsdp}_4$, i.e., the rank condition \eqref{eq::rk2} holds for
$k'=4$. By Theorem \ref{th::sdp34} (ii), the point
$u^{\star}:=\frac{\mL_4^{\star}(x)}{\mL_4^{\star}(1)}=(0.9044,
0.8460)$ is a minimizer and $r^{\dsdp}_4=-0.8745$ is the minimum of
\eqref{eq::case3}. The CPU time is about $16.50$ seconds.
To show the accuracy of the solution , we draw some contour curves of
$f/g$, including the one where $f/g$ is the constant value
$f(u^{\star})/g(u^{\star})=-0.8745$ (the blue curve), and mark the point
$u^{\star}$ by a red dot in Figure \ref{fig::case34} (left). 
Then we solve \eqref{eq::case3} with the ACA method. The algorithm
terminated successfully and returned the solution $(0.9040, 0.8463)$.
The overall CPU time is $21.57$ seconds.

		\vskip 5pt
\noindent{\sf Case} 4: Consider the FSIPP problem
\begin{equation}\label{eq::case4}
	\left\{
	\begin{aligned}
		\min_{x\in\RR^2}\ &\frac{(x_1-2)^2+(x_2-2)^2-1}{-x_1^2-x_2^2+4}\\
		\text{s.t.}\ & \psi(x)=x_1^2+x_2^2-1\le 0,\\
		& p(x,y)=\frac{h_2(x_1,x_2)}{1000}+y_1y_2(x_1+x_2)-1\le 0 ,\ \forall\
		y\in\Y,\\
		& \Y=\{y\in\RR^2\mid y_1^2+y_2^2\le 1\}. 
	\end{aligned}
	\right.
\end{equation}
Clearly, this problem is in {\sf Cases} $4$ and satisfies {\sf
(A1-2)}. 
We solve the SDP relaxation $(\ref{eq::f*rdual}k)$ 
with the setting $\csk{k}$ and $\cy$ aforementioned. 
For the first order $k=4$, we have  $\rank
\mathbf{M}_{3}(\mL_4^{\star})=\rank \mathbf{M}_{4}(\mL_4^{\star})=1$
(within a tolerance $<10^{-8}$) for a minimizer $\mL_4^{\star}$ of
of $r^{\dsdp}_4$. By Theorem \ref{th::sdp34} (ii), the point
$u^{\star}:=\frac{\mL_4^{\star}(x)}{\mL_4^{\star}(1)}=(0.7211,
0.6912)$ is a minimizer of \eqref{eq::case4}. The CPU time is about
$14.60$ seconds. See Figure \ref{fig::case34} (right) for the accuracy
of the solution. 
Then we solve \eqref{eq::case4} with the ACA method. The algorithm
terminated successfully and returned the solution $(0.7039, 0.6823)$.
The overall CPU time is $768.02$ seconds.
\vskip 5pt

For the above four FSIPP problems, 
we remark that the optimality of the solution obtained by our SDP
method can be guaranteed by Theorem \ref{th::sdp12} (i) and Theorem
\ref{th::sdp34} (ii), while the solution concept of the ACA method is
that of stationary points and all iterates are feasible points for the
original SIP.
\qed

\begin{figure}
	\centering
	\scalebox{0.5}{
		\includegraphics[trim=80 200 80 200,clip]{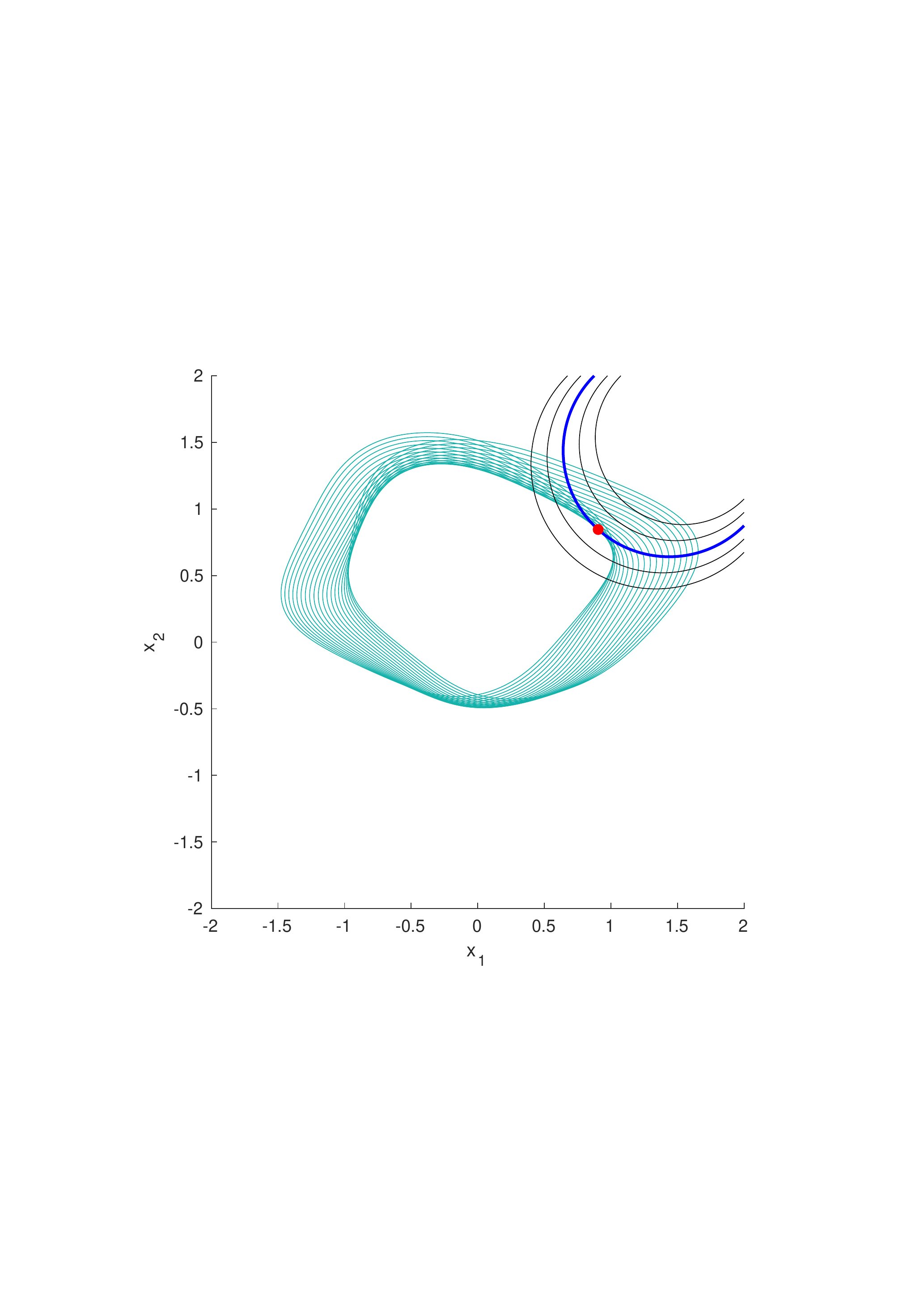}
	}
	\scalebox{0.5}{
		\includegraphics[trim=80 200 80 200,clip]{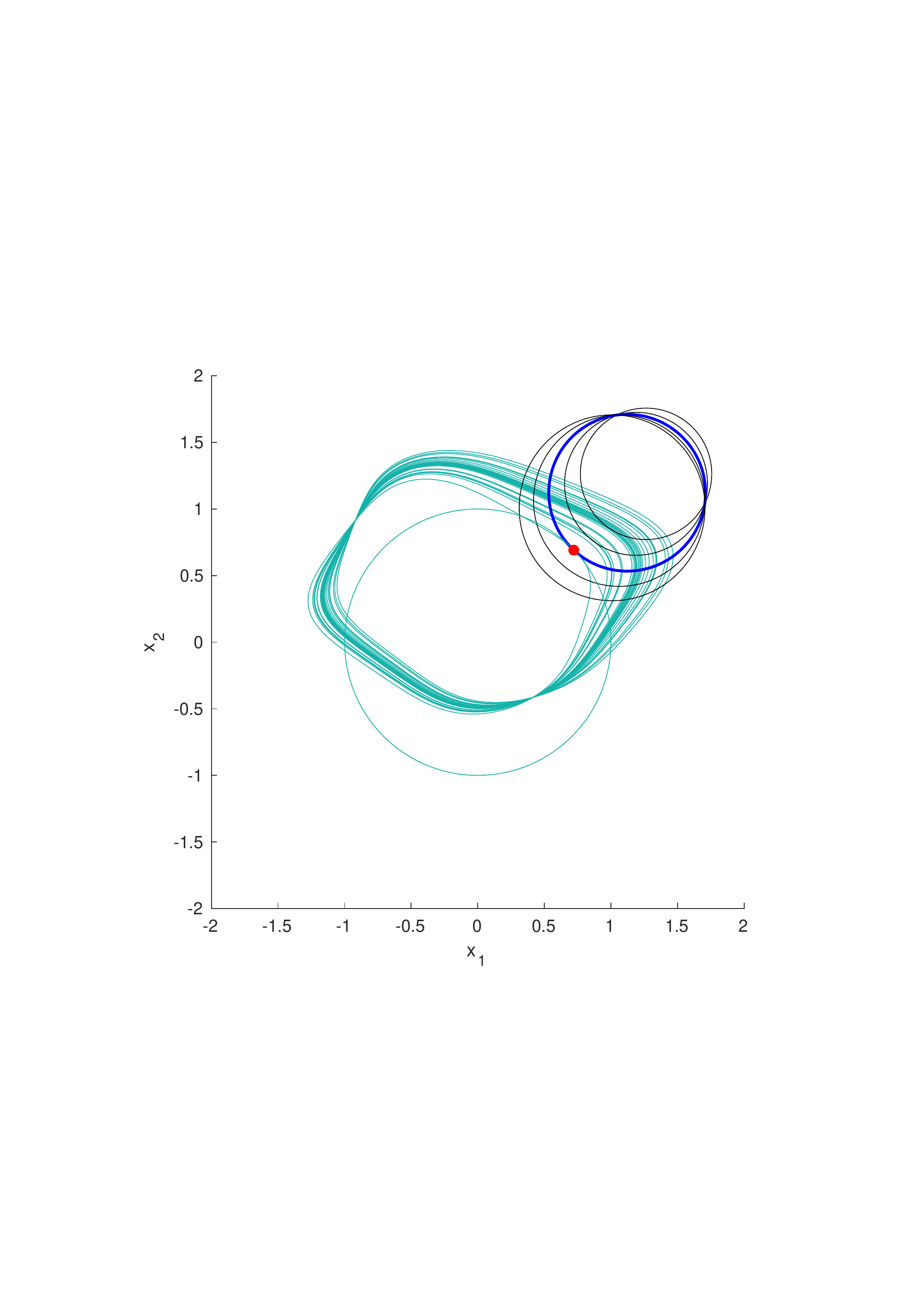}
	}
	\caption{The feasible set $\K$ and contour curves of $f/g$ in
		Example \ref{ex::4cases} {\sf Case} 3 (left) and {\sf Case} 4
		(right). \label{fig::case34} }
\end{figure}
}
\end{example}

\subsection{Application to Multi-objective FSIPP}\label{sec::mfsipp}

In this part, we apply the above approach for the special four cases
of FSIPP problems  
to the following multi-objective fractional semi-infinite polynomial
programming (MFSIPP) problem
\begin{equation}\label{MFP}
	\left\{
	\begin{aligned}
		{\rm Min}_{\RR^t_+}&\  \left(\frac{f_1(x)}{g_1(x)},\ldots,\frac{f_t(x)}{g_t(x)}\right)\\
		\text{s.t.}&\ p(\bx,\by)\le 0,\ \ \forall \by\in
		\Y\subset\RR^n,
	\end{aligned}\right.
\end{equation}
where
$f_i(\bX), g_i(x)\in\RR[\bX],$ $i = 1, \ldots, t,$ $p(\bX,\bY)
\in\RR[\bX,\bY]$.
Note that ``${\rm Min_{\RR^t_+}}$'' in the above
problem~\eqref{MFP} is understood in the vectorial sense, where a
partial ordering is induced in the image space $\RR^t,$ by the
non-negative cone $\RR^t_+.$
Let $a, b \in \RR^t,$ the partial ordering says that $a \geq b$ (or $a
- b \in \RR^t_+$), which can equivalently be written as $a_i \geq b_i,$
for all $i = 1, \ldots, t,$ where $a_i$ and $b_i$ stands for the $i$th
component of the vectors $a$ and $b,$ respectively.
Denote by $\F$ the feasible set of \eqref{MFP}.
We make the following assumptions on the MFSIPP problem \eqref{MFP}:

\begin{quote}
	{\sf (A5):} $\Y$ is compact; $f_i(\bX)$, $-g_i(x)$, $i = 1, \ldots, t,$ and
	$p(\bX,\by)\in\RR[\bX]$ for every $\by\in \Y$ are all convex in
	$\bX$;\\
	{\sf (A6):} For each $i=1,\ldots,t$, either $f_i(x)\ge 0$ and
	$g_i(x)>0$ for all $x\in\F$; or $g_i(x)$ is affine and $g_i(x)>0$ for
	all $x\in\F$.
\end{quote}

\begin{definition}\label{def::es}
	A point $u^{\star}\in \F$ is said to be an {\it efficient solution} to
	\eqref{MFP} if
	\begin{equation}\label{efficientsolution}
		\left(\frac{f_1(x)}{g_1(x)},\ldots,\frac{f_t(x)}{g_t(x)}\right)-
		\left(\frac{f_1(u^{\star})}{g_1(u^{\star})},\ldots,\frac{f_t(u^{\star})}{g_t(u^{\star})}\right)
		\not\in-\RR^{t}_+\backslash\{0\}, \quad \forall x\in \F.
	\end{equation}
\end{definition}
Efficient solutions to \eqref{MFP} are also known as Pareto-optimal
solutions. The aim of this part is to find efficient solutions to
\eqref{MFP}.
As far as we know, very few algorithmic developments
are available for such a case in the literature because of the
difficulty of checking feasibility of a given point.

The $\epsilon$-constraint method \cite{Econstraint,Chankong1983} may be the best
known technique to solve a {\it nonconvex} multi-objective optimization problem.
The basic idea for this method is to minimize one of the original objectives while
the others are transformed to constraints by setting an upper bound to
each of them. 
Based on the criteria for the $\epsilon$-constraint method given in
\cite{Ehrgott2005}, an algorithm to obtain an efficient solution to
\eqref{MFP} follows.
\begin{framed}
\begin{algorithm}\label{al::main}{\rm (Compute an efficient solution
		$u^{\star}$ to the MFSIPP problem \eqref{MFP}.)}
\vskip -10pt
		{\sf 
\begin{enumerate}[\upshape 1.]
	\item Set $i=1$ and choose an initial point $u^{(i-1)}\in\F$.
	\item Solve the single objective FSIPP problem
		\begin{equation}\label{eq::p}
	r_i:=\min_{x\in\F}\ \frac{f_i(x)}{g_i(x)}\quad \text{s.t.}\	\
	g_j(u^{(i-1)})f_j(x)-f_j(u^{(i-1)})g_j(x)\le 0,\ \ j\neq i.
	\tag{{$\mathrm{P}_i$}}
		\end{equation}
		and extract a minimizer $u^{(i)}$ of \eqref{eq::p}.
	\item If either $u^{(i)}$ can be verified to be the unique
		minimizer of \eqref{eq::p} or $i=t$, return $u^{\star}=u^{(i)}$;
		otherwise, let $i=i+1$ and go to Step 2.
\end{enumerate}}
\end{algorithm}
\end{framed}
\begin{theorem}\label{th::correctness}
	The output $u^{\star}$ in Algorithm \ref{al::main} is indeed an
	efficient solution to \eqref{FMP}.
\end{theorem}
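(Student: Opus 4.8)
\medskip

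The plan is to view Algorithm~\ref{al::main} as an instance of the classical $\epsilon$-constraint scheme and to check that each of the two possible outputs is efficient in the sense of Definition~\ref{def::es}. Write $\phi_j(x):=f_j(x)/g_j(x)$ for $j=1,\dots,t$. Since every iterate $u^{(i-1)}$ lies in $\F$ (indeed $u^{(0)}\in\F$ by choice, and each extracted minimizer $u^{(i)}$ is in particular feasible for $(\mathrm{P}_i)$, hence in $\F$), assumption {\sf (A6)} gives $g_j(x)>0$ for $x\in\F$ and $g_j(u^{(i-1)})>0$, so for $x\in\F$ the inequality $g_j(u^{(i-1)})f_j(x)-f_j(u^{(i-1)})g_j(x)\le 0$ occurring in $(\mathrm{P}_i)$ is equivalent to $\phi_j(x)\le\phi_j(u^{(i-1)})$; thus the feasible set of $(\mathrm{P}_i)$ is $\F\cap\{x:\phi_j(x)\le\phi_j(u^{(i-1)})\text{ for all }j\ne i\}$. (Note also that under {\sf (A5)}--{\sf (A6)} each added constraint $g_j(u^{(i-1)})f_j(x)-f_j(u^{(i-1)})g_j(x)\le 0$ is convex in $x$, either because $f_j(u^{(i-1)})\ge 0$ and $-g_j$ is convex, or because $g_j$ is affine, so $(\mathrm{P}_i)$ is again an FSIPP problem of the form \eqref{FMP} and the methods of this section apply.)

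I would first record the \emph{monotonicity} of the iterates. Feasibility of $u^{(i)}$ for $(\mathrm{P}_i)$ yields $\phi_j(u^{(i)})\le\phi_j(u^{(i-1)})$ for $j\ne i$; and $u^{(i-1)}$ is also feasible for $(\mathrm{P}_i)$ (the constraints $\phi_j(u^{(i-1)})\le\phi_j(u^{(i-1)})$ hold trivially), so optimality of $u^{(i)}$ gives $\phi_i(u^{(i)})=r_i\le\phi_i(u^{(i-1)})$. Hence $\phi_j(u^{(i)})\le\phi_j(u^{(i-1)})$ for \emph{every} $j$ and every $i$, and therefore $\phi_j(u^{(i)})\le\phi_j(u^{(\ell)})$ whenever $i\ge\ell$. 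In particular each $(\mathrm{P}_i)$ is consistent, so the algorithm is well defined.

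Case~1: the algorithm returns $u^\star=u^{(i)}$ because $u^{(i)}$ is verified to be the \emph{unique} minimizer of $(\mathrm{P}_i)$. If $u^{(i)}$ were not efficient, by \eqref{efficientsolution} there would be $\bar x\in\F$ with $\phi_j(\bar x)\le\phi_j(u^{(i)})$ for all $j$ and $\phi_{j_0}(\bar x)<\phi_{j_0}(u^{(i)})$ for some $j_0$. For $j\ne i$ monotonicity gives $\phi_j(\bar x)\le\phi_j(u^{(i)})\le\phi_j(u^{(i-1)})$, so $\bar x$ is feasible for $(\mathrm{P}_i)$; since also $\phi_i(\bar x)\le\phi_i(u^{(i)})=r_i$, $\bar x$ is optimal for $(\mathrm{P}_i)$ and $\phi_i(\bar x)=r_i$, which forces $j_0\ne i$. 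Uniqueness then gives $\bar x=u^{(i)}$, contradicting $\phi_{j_0}(\bar x)<\phi_{j_0}(u^{(i)})$. Hence $u^{(i)}$ is efficient. Case~2: the algorithm runs to $i=t$ and returns $u^\star=u^{(t)}$. Suppose $\bar x\in\F$ satisfies $\phi_j(\bar x)\le\phi_j(u^{(t)})$ for all $j$, with strict inequality at some index $j_0$. For $j\ne t$ we have $\phi_j(\bar x)\le\phi_j(u^{(t)})\le\phi_j(u^{(t-1)})$, so $\bar x$ is feasible for $(\mathrm{P}_t)$; optimality of $u^{(t)}$ gives $\phi_t(\bar x)\ge r_t=\phi_t(u^{(t)})$, hence $\phi_t(\bar x)=\phi_t(u^{(t)})$ and $1\le j_0\le t-1$ (if $t=1$ no such $\bar x$ exists and we are done). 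Applying the same reasoning at step $j_0$: for $j\ne j_0$, $\phi_j(\bar x)\le\phi_j(u^{(t)})\le\phi_j(u^{(j_0-1)})$ by monotonicity (as $t>j_0-1$), so $\bar x$ is feasible for $(\mathrm{P}_{j_0})$ and $\phi_{j_0}(\bar x)\ge r_{j_0}=\phi_{j_0}(u^{(j_0)})\ge\phi_{j_0}(u^{(t)})$, contradicting $\phi_{j_0}(\bar x)<\phi_{j_0}(u^{(t)})$. Hence no such $\bar x$ exists and $u^{(t)}$ is efficient, which completes the proof.

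The manipulations above are routine; the only step requiring some care is Case~2, where no uniqueness is available. There one must exploit both the componentwise monotonicity of the objective values along the whole sequence $u^{(0)},\dots,u^{(t)}$ and the fact that a hypothetical dominating point $\bar x$ would remain feasible for \emph{all} of the earlier subproblems $(\mathrm{P}_{j_0})$ — in particular for the one whose index $j_0$ is exactly the coordinate of strict improvement — in order to reach the contradiction. I expect this to be the (mild) crux of the argument.
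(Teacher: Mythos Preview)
Your proof is correct. The paper does not give a self-contained argument at all: its entire proof of Theorem~\ref{th::correctness} is a bare citation to \cite[Propositions~4.4 and 4.5]{Ehrgott2005} (and \cite[Theorem~3.4]{Lee2018}). What you have written is, in effect, a direct proof of those cited propositions specialized to the present setting: Proposition~4.4 of \cite{Ehrgott2005} is the statement that a \emph{unique} optimal solution of an $\epsilon$-constraint scalarization is efficient (your Case~1), and Proposition~4.5 is the statement that iterating the $\epsilon$-constraint scheme through all $t$ objectives, using each newly obtained optimal value as the next bound, yields an efficient point (your Case~2). The monotonicity observation $\phi_j(u^{(i)})\le\phi_j(u^{(i-1)})$ for all $j$ and the feasibility of a hypothetical dominator $\bar x$ for every earlier subproblem $(\mathrm{P}_{j_0})$ are exactly the ingredients used in the standard proofs. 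So your route is not different from the paper's in spirit; it simply fills in what the paper leaves to the references, and does so cleanly.
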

\begin{proof}
We refer to \cite[Propositions~4.4 and 4.5]{Ehrgott2005}; see also \cite[Theorem 3.4]{Lee2018}. 
\end{proof}
\begin{remark}\label{rk::slaterfails}{\rm
\begin{itemize}
  \item[{\rm (i)}] Clearly, \eqref{eq::p} is an FSIPP problem of the form \eqref{FMP}.
It is easy to see that for each $i=2,\ldots,t$, the constraints
\[
	g_j(u^{(i-1)})f_j(x)-f_j(u^{(i-1)})g_j(x)\le 0,\ j=1,\ldots,i-1,
\]
are all active in \eqref{eq::p}. Therefore, the
Slater condition fails for \eqref{eq::p} with $i=2,\ldots,t.$
  \item[{\rm (ii)}] According to Algorithm \ref{al::main}, the problem of finding an
	efficient solution of the MFSIPP problem \eqref{MFP} reduces to
	solving every scalarized problem \eqref{eq::p} and extracting a (common)
	minimizer, which is the key for the success of Algorithm \ref{al::main}.
	Generally, approximate solutions to \eqref{eq::p} can be obtained by some
	numerical methods for semi-infinite programming problems.
	However, note that
	the errors introduced by any approximate solutions can accumulate
	in the process of the $\epsilon$-constraint method. 
This can potentially make the output solution unreliable.\qed
\end{itemize}
}\end{remark}

We have studied four cases of the FSIPP problem, for which
at least one minimizer can be extracted by the proposed SDP approach. 
Now we apply this approach to the four corresponding cases of MFSIPP
problem: 

{\sf Case} I (resp., II):\ \ \eqref{eq::p} is in {\sf Case} 1 (resp., 2) for each $i=1,\ldots,t$;

{\sf Case} III (resp., IV):\ \ ($\mathrm{P}_{i'}$) is in {\sf Case} 3
(resp., 4) for some $i'\in\{1,\ldots,t\}$.

For {\sf Case} I and II, 
if the assumptions
in Theorem \ref{th::sdp12} hold for each \eqref{eq::p}, 
then an efficient solution to the MFSIPP problem
\eqref{MFP} can obtained by solving $t$ SDP problems.

For {\sf Case} III and IV, 
we only need solve ($\mathrm{P}_{i'}$)
to get an efficient solution to \eqref{MFP}.  In fact, we have the following result.
\begin{proposition}\label{prop::unique2}
In {\sf Cases} {\upshape III}-{\upshape IV}$:$ under {\sf (A5-6)}$,$
the scalarized problem $(\mathrm{P}_{i'})$ has a unique minimizer
$u^{(i')}$ which is an efficient solution to the MFSIPP
	problem \eqref{MFP}.
\end{proposition}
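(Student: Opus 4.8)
The plan is to reduce the uniqueness of the minimizer of $(\mathrm{P}_{i'})$ to the strict convexity of a single convex polynomial, and then to obtain the efficiency claim from the theory of the $\epsilon$-constraint method exactly as in the proof of Theorem \ref{th::correctness}.

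First I would record that, for the fixed point $u^{(i'-1)}\in\F$, the problem $(\mathrm{P}_{i'})$ is an FSIPP problem of the form \eqref{FMP} satisfying {\sf (A1-2)}. Indeed, by {\sf (A6)} one has $g_j(u^{(i'-1)})>0$ for every $j$, and for $j\neq i'$ either $f_j(u^{(i'-1)})\ge 0$ with $-g_j$ convex, or $g_j$ is affine; in both cases $g_j(u^{(i'-1)})f_j(x)-f_j(u^{(i'-1)})g_j(x)$ is convex in $x$, so the extra constraints of $(\mathrm{P}_{i'})$ are convex and {\sf (A1)} follows from {\sf (A5)}, while {\sf (A6)} gives {\sf (A2)} for $(\mathrm{P}_{i'})$. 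Now put $r:=r_{i'}$ and $h(x):=f_{i'}(x)-r\,g_{i'}(x)$. By {\sf (A2)} for $(\mathrm{P}_{i'})$, either $r\ge 0$ and $-g_{i'}$ is convex, or $g_{i'}$ is affine, so $h$ is a convex polynomial. Since $(\mathrm{P}_{i'})$ lies in {\sf Case} 3 (resp.\ {\sf Case} 4), there is a minimizer $u^{\star}$ of $(\mathrm{P}_{i'})$ with $\nabla^2 f_{i'}(u^{\star})\succ 0$; because $\nabla^2(-g_{i'})\succeq 0$ (or vanishes identically), we obtain $\nabla^2 h(u^{\star})\succ 0$, and Proposition \ref{prop::sc} shows that $h$ is strictly convex on $\RR^m$.

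Let $\widehat{\K}$ denote the feasible set of $(\mathrm{P}_{i'})$, which is convex and closed by the above. For every $x\in\widehat{\K}$ we have $g_{i'}(x)>0$ and $f_{i'}(x)/g_{i'}(x)\ge r$, hence $h(x)=g_{i'}(x)\big(f_{i'}(x)/g_{i'}(x)-r\big)\ge 0$, with equality precisely when $x$ is a minimizer of $(\mathrm{P}_{i'})$; since $h(u^{\star})=0$, the minimizer set of $(\mathrm{P}_{i'})$ coincides with $\{x\in\widehat{\K}\mid h(x)=0\}$, i.e.\ with the set of minimizers of $h$ over $\widehat{\K}$. Strict convexity of $h$ on $\RR^m$ forces this set to be a singleton (two distinct minimizers would let their midpoint, which lies in the convex set $\widehat{\K}$, strictly decrease $h$), so $(\mathrm{P}_{i'})$ has a unique minimizer $u^{(i')}$, and $u^{(i')}\in\widehat{\K}\subseteq\F$. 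Finally, dividing the $j$-th constraint of $(\mathrm{P}_{i'})$ by $g_j(u^{(i'-1)})g_j(x)>0$ exhibits $(\mathrm{P}_{i'})$ as the $\epsilon$-constraint problem for \eqref{MFP} in which $f_{i'}/g_{i'}$ is minimized over $\F$ subject to $f_j/g_j\le f_j(u^{(i'-1)})/g_j(u^{(i'-1)})$ for $j\neq i'$; since this problem has a unique optimal solution, \cite[Propositions 4.4 and 4.5]{Ehrgott2005} (cf.\ the proof of Theorem \ref{th::correctness}) yield that $u^{(i')}$ is an efficient solution to \eqref{MFP}. The only steps deserving attention are the convexity of the scalarization constraints, handled by {\sf (A6)}, and the reduction of the constrained uniqueness question to the unconstrained strict convexity of $h$; the rest is routine.
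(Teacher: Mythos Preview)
Your proof is correct and follows essentially the same approach as the paper: you reduce uniqueness to the strict convexity of $h(x)=f_{i'}(x)-r_{i'}g_{i'}(x)$ via Proposition~\ref{prop::sc}, and then appeal to the $\epsilon$-constraint method (as in Theorem~\ref{th::correctness}) for efficiency. Your write-up is in fact more careful than the paper's, since you explicitly check that the extra scalarization constraints are convex under {\sf (A6)} and that $\nabla^2 h(u^{\star})\succ 0$ follows from $\nabla^2 f_{i'}(u^{\star})\succ 0$ together with $r\ge 0$ or $g_{i'}$ affine, steps the paper leaves implicit.
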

\begin{proof}
	By assumption, $f_{i'}(x)-r_{i'}g_{i'}(x)$ is convex and its minimum on the
feasible set of ($\mathrm{P}_{i'}$) is $0$ attained at any optimal
solution of ($\mathrm{P}_{i'}$).
By Proposition \ref{prop::sc}, $f_{i'}(x)-r_{i'}g_{i'}(x)$ is coercive and
strictly convex on $\RR^m$. Then,
$f_{i'}(x)-r_{i'}g_{i'}(x)$ has a unique minimizer on the feasible set
of ($\mathrm{P}_{i'}$). Consequently, ($\mathrm{P}_{i'}$)
has a unique minimizer $u^{(i')}$. By Theorem \ref{th::correctness},
$u^{(i')}$ is an efficient solution to \eqref{MFP}. 
\end{proof}
As a result, in {\sf Case} III and {\sf Case} IV, if the assumptions
in Theorem \ref{th::sdp34} hold for ($\mathrm{P}_{i'}$),
an efficient solution to the MFSIPP problem \eqref{MFP} can be
obtained by solving finitely many SDP problems.
\vskip 5pt
\begin{example}{\rm
To show the efficiency of the SDP method for the four cases
of the MFSIPP problem discussed above, now we present an
example for each case. 
In each of the following examples, $m=2$ and $t=2$. We pick some
	points $y$ on a uniform discrete grid inside $\Y$ and draw the
	corresponding curves $p(x,y)=0$. Hence, the feasible set $\F$ is
	illustrated by the area enclosed by these curves.
The initial point $u^{(0)}$ and the output $u^{\star}$ of Algorithm
\ref{al::main} are marked in $\F$ by `$\ast$' in blue and red, respectively.
To show the accuracy of the output, we first illustrate the image of
$\F$ under the map $\left(\frac{f_1}{g_1},\frac{f_2}{g_2}\right)$. To
this end, we choose a square containing $\F$. For each point
$u$ on a uniform discrete grid inside the square, we check if
$u\in\F$ (as we will see it is easy for our examples).
If so, we plot the point
$\left(\frac{f_1(u)}{g_1(u)},\frac{f_2(u)}{g_2(u)}\right)$ in the
image plane.
The points
$\left(\frac{f_1(u^{(0)})}{g_1(u^{(0)})},\frac{f_2(u^{(0)})}{g_2(u^{(0)})}\right)$ and
$\left(\frac{f_1(u^{\star})}{g_1(u^{\star})},\frac{f_2(u^{\star})}{g_2(u^{\star})}\right)$
are then marked in the image by `$\ast$' in blue and red,
respectively. We will see from the figures that the output of Algorithm
\ref{al::main} in each example is indeed as we expect.

\vskip 5pt
\noindent{\sf Case} I:
	Consider the ellipse
	\[
		\F=\{(x_1,x_2)\in\RR^2\mid 2x_1^2+x_2^2+2x_1x_2+2x_1\le 0\},
	\]
	which can be represented by
	\[
		\{(x_1,x_2)\in\RR^2\mid p(x_1,x_2,y_1)\le 0,\ \forall y_1\in \Y\},
	\]
	where
	\[
		p(x_1,x_2,y_1)=(y_1^4+2y_1^3-3y_1^2-2y_1+1)x_1+2y_1(y_1^2-1)x_2-2y_1^2,
	\]
	and $\Y=[-1,1]$ $($See {\upshape\cite{LSIP}}$)$. The feasible set
	$\F$ is illustrated in Figure \ref{fig::case1} (left).

	Consider the problem
	\[
		{\rm Min}_{\RR^{2}_+}\left\{\left(\frac{f_1}{g_1},\frac{f_2}{g_2}\right):=\left(\frac{x_1^2+x_2}{x_2+1},\
	x_1^2-x_2+x_1\right)\Big|\ x \in \F\right\}.
\]
Clearly, this problem is in {\sf Case} I.
By checking if a given point is in the ellipse $\F$, it is easy to depict the
image of $\F$ in the way aforementioned,
which is shown in Figure \ref{fig::case1} (right).
Let the initial point be
$u^{(0)}=(-1,1)$ in Algorithm \ref{al::main}. The output is
$u^{\star}=u^{(2)}=(-0.2138,0.8319)$. These points and their images are marked
in Figure \ref{fig::case1}.
\begin{figure}
	\centering
\scalebox{0.45}{
	\includegraphics[trim=90 185 70 140,clip]{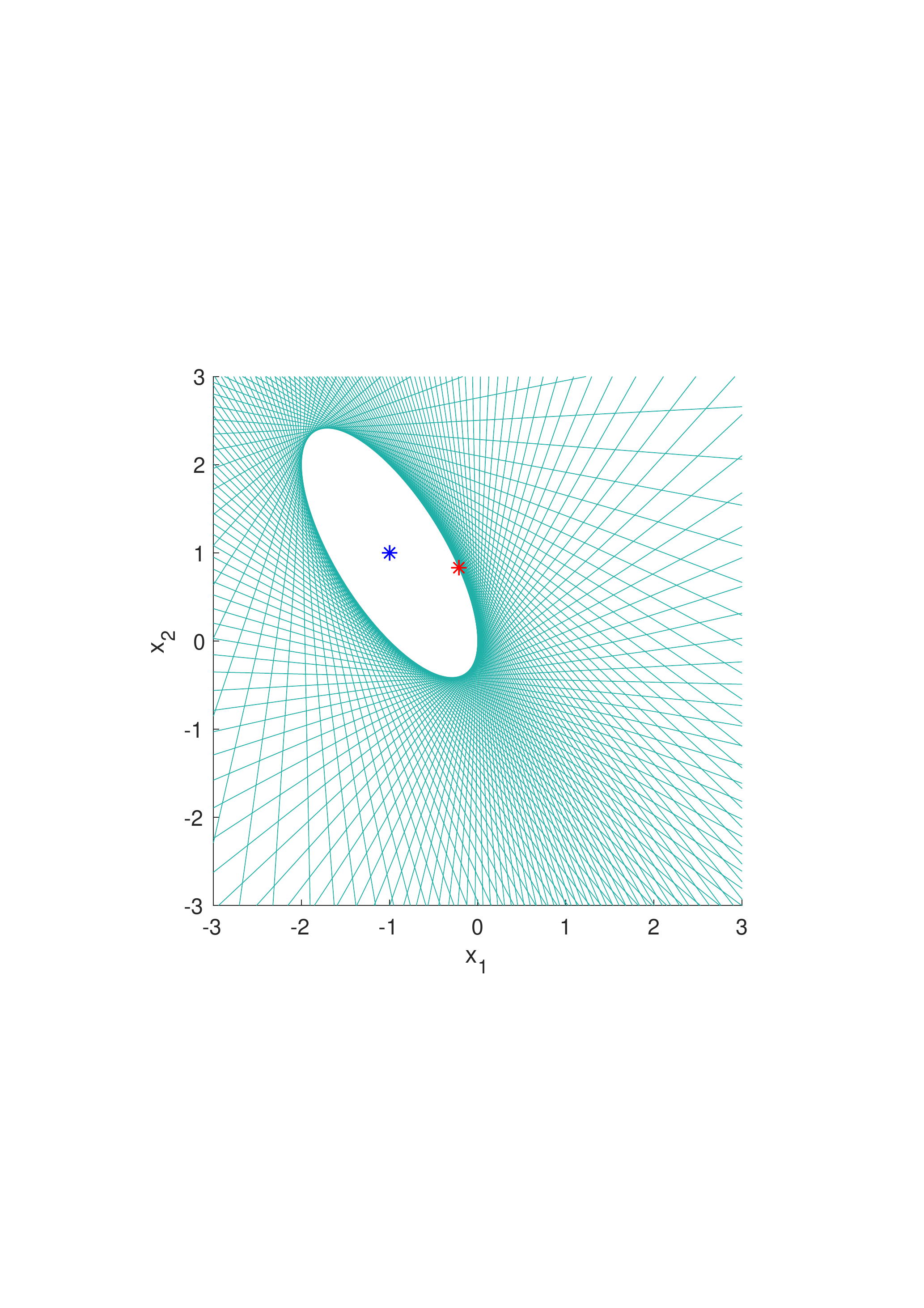}
}
\scalebox{0.45}{
	\includegraphics[trim=30 180 80 200,clip]{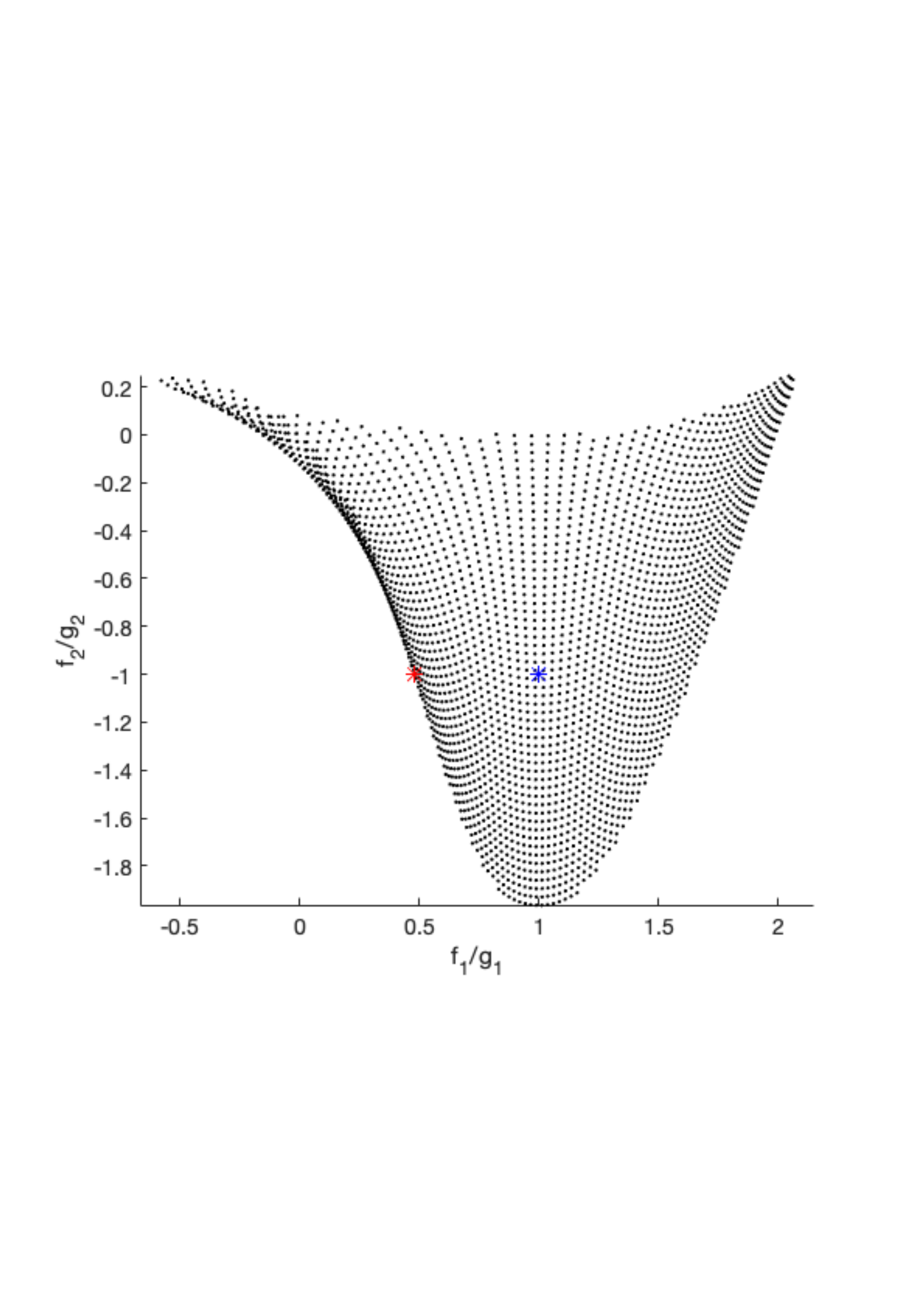}
}
	\caption{\label{fig::case1}The feasible set $\F$ (left) and its
	image (right) in the example of {\sf Case} I.}
\end{figure}

\vskip 5pt
\noindent{\sf Case} II:
	Consider the set
	\[
		\F=\{(x_1,x_2)\in\RR^2\mid p(x_1,x_2,y_1,y_2)\le
		0,\quad\forall y\in \Y\}
	\]
	where
	$p(x_1,x_2,y_1,y_2)=-1+x_1^2+x_2^2+(y_1-y_2)^2x_1x_2$ and
	\[
		\Y=\{(y_1,y_2)\in\RR^2\mid 1-y_1^2-y_2^2\ge 0\}.
	\]
	The set $\F$ is illustrated in Figure \ref{fig::case2} (left).
	The Hessian matrix of $p$ with respect to $x_1$ and $x_2$ is
	\[
		H=\left[\begin{array}{cc}
		2& (y_1-y_2)^2\\
(y_1-y_2)^2& 2
\end{array}\right]\qquad\text{with}\quad
\det(H)=4-(y_1-y_2)^4.
	\]
It is easy to see that $p(x_1,x_2,y_1,y_2)$ is s.o.s-convex in $(x_1,x_2)$
	for every $y\in \Y$.
	
	Consider the problem
	\[
	{\rm Min}_{\RR^{2}_+}\left\{\left(\frac{f_1}{g_1},\frac{f_2}{g_2}\right):=\left(\frac{x_2^2-x_1+1}{-x_1^2+2},\
	x_1^2+x_2+x_1\right)\Big|\ x \in \F\right\}.
\]
Clearly, this problem is in {\sf Case} II.
To depict the image of $\F$ in the aforementioned way,
we remark that $\F$ is in fact the area
enclosed by the lines $x_1+x_2=\pm 1$ and the unit circle. Hence, it
is easy to check whether a given point is in $\F$.
The image of $\F$ is shown in Figure \ref{fig::case2} (right).
Let the initial point be
$u^{(0)}=(0,1)$ in Algorithm \ref{al::main}. The output is
$u^{\star}=u^{(2)}=(0.6822,-0.1476)$.
These points and their images are marked in Figure \ref{fig::case2}.
\begin{figure}
	\centering
\scalebox{0.45}{
	\includegraphics[trim=80 200 80 200,clip]{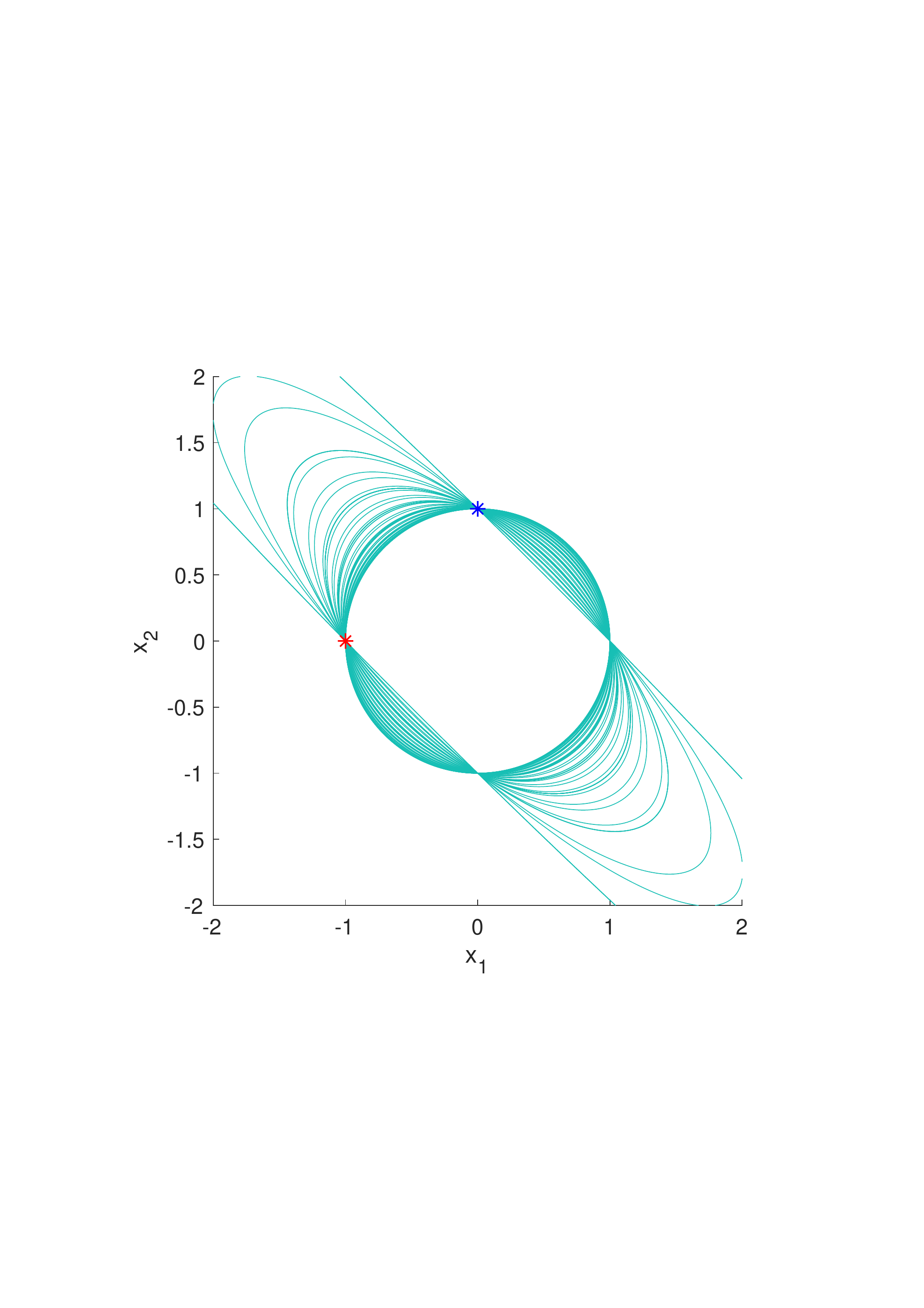}
}
\scalebox{0.45}{
	\includegraphics[trim=30 195 80 200,clip]{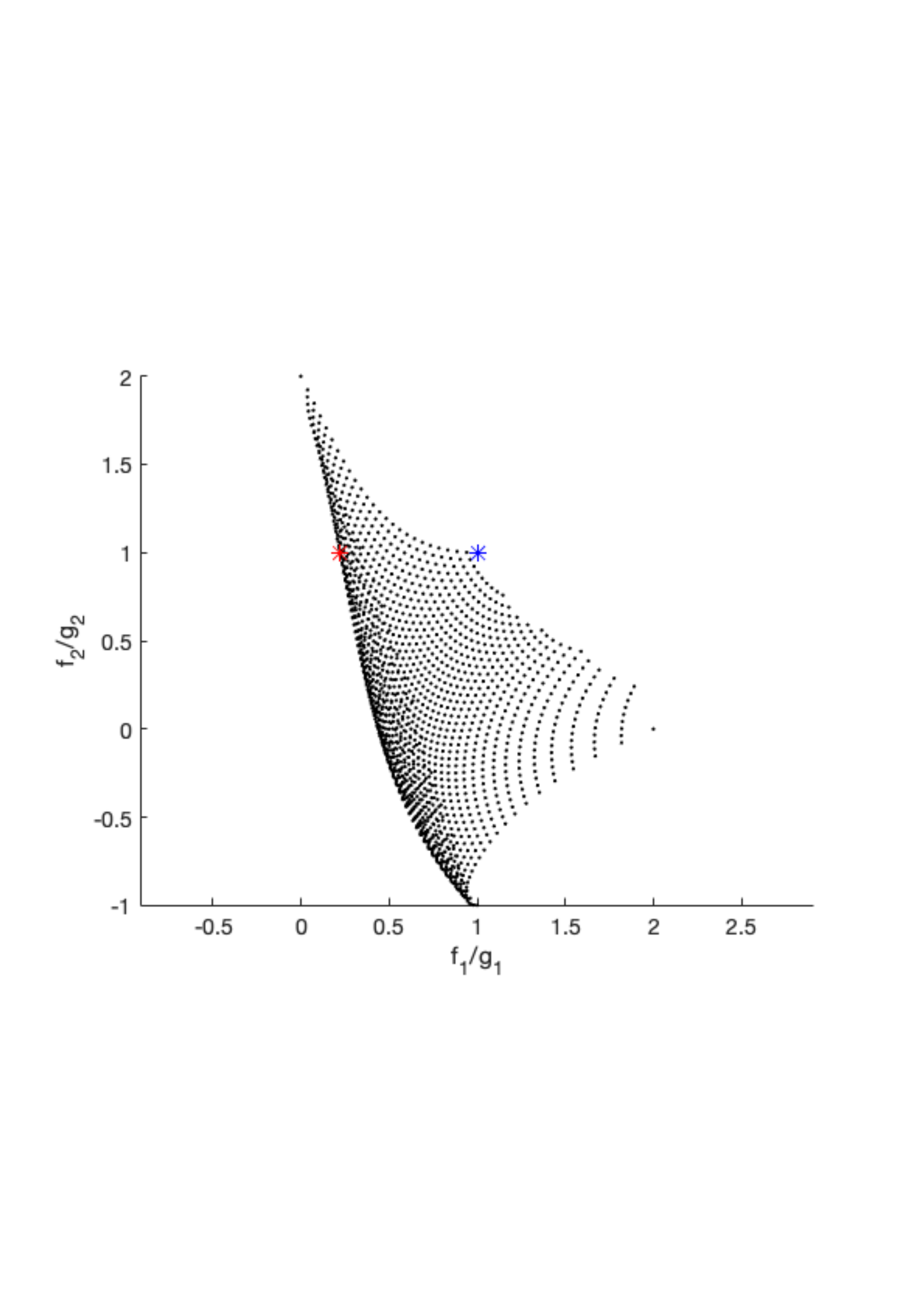}
}
	\caption{\label{fig::case2} The feasible set $\F$ (left) and its
	image (right) in the example of {\sf Case} II.}
\end{figure}

\vskip 5pt
\noindent{\sf Case} III:
Consider the polynomial $h_1(x_1,x_2,x_3)$ in \eqref{eq::nonsosconvex}
and let
	\[
		\F=\{(x_1,x_2)\in\RR^2\mid p(x_1,x_2,y_1)\le 0,\quad
		\forall y_1\in \Y\},
	\]
	where
	$p(x_1,x_2,y_1)=-1+h_1(x_1,x_2,1)/100-y_1x_1-y_1^2x_2$ and $\Y=[-1,1]$.
	Clearly, $p(x,y_1)$ is convex but not s.o.s-convex for every $y_1\in \Y$.
	We illustrate $\F$ in Figure \ref{fig::case3} (left).
	
	Consider the problem
	\begin{equation}\label{eq::ex3}
	 {\rm Min}_{\RR^{2}_+}\left\{\left(\frac{f_1}{g_1},\frac{f_2}{g_2}\right)
	 :=\left(\frac{x_1^2+x_2^2+1}{-x_1^2-x_2+3},\
	x_1^2+x_2^2-x_1+1\right)\Big|\ x \in \F\right\}.
\end{equation}
For a given point
$u\in\RR^2$, as $p(u_1,u_2,y_1)$ is a univariate quadratic function, it
is easy to check whether $-p(u_1,u_2,y_1)$ is nonnegative on $[-1,1]$
(i.e., whether $u\in\F$). Hence,
The image of $\F$ can be easily depicted in Figure \ref{fig::case3} (right).
Clearly, ($\mathrm{P}_1$) is in {\sf Case} 3. Hence, we only need to
solve ($\mathrm{P}_1$) to get an efficient solution by Proposition
\ref{prop::unique2} and Theorem \ref{th::correctness}. 
We let the initial point be $u^{(0)}=(-0.6,0.5)$ in Algorithm
\ref{al::main} and solve ($\mathrm{P}_1$) by the SDP relaxations
for {\sf Case} 3.
We set the first order $k=4$ and check if
the rank condition \eqref{eq::rk2} holds. If not, check the next order. 
We have  $\rank
\mathbf{M}_{3}(\mL_4^{\star})=\rank \mathbf{M}_{4}(\mL_4^{\star})=1$
(within a tolerance $<10^{-8}$) for a minimizer $\mL_4^{\star}$ of
of $r^{\dsdp}_4$, 
i.e., the rank condition \eqref{eq::rk2} holds for
$k'=4$. By Theorem \ref{th::sdp34} (ii), the point
$u^{\star}:=\frac{\mL_4^{\star}(x)}{\mL_4^{\star}(1)}=(0.000,-0.1623)$
is an efficient solution to
\eqref{eq::ex3}.
These points $u^{(0)}, u^{\star}$ and their images are marked in Figure
\ref{fig::case3}.
\begin{figure}
	\centering
\scalebox{0.45}{
	\includegraphics[trim=80 200 80 200,clip]{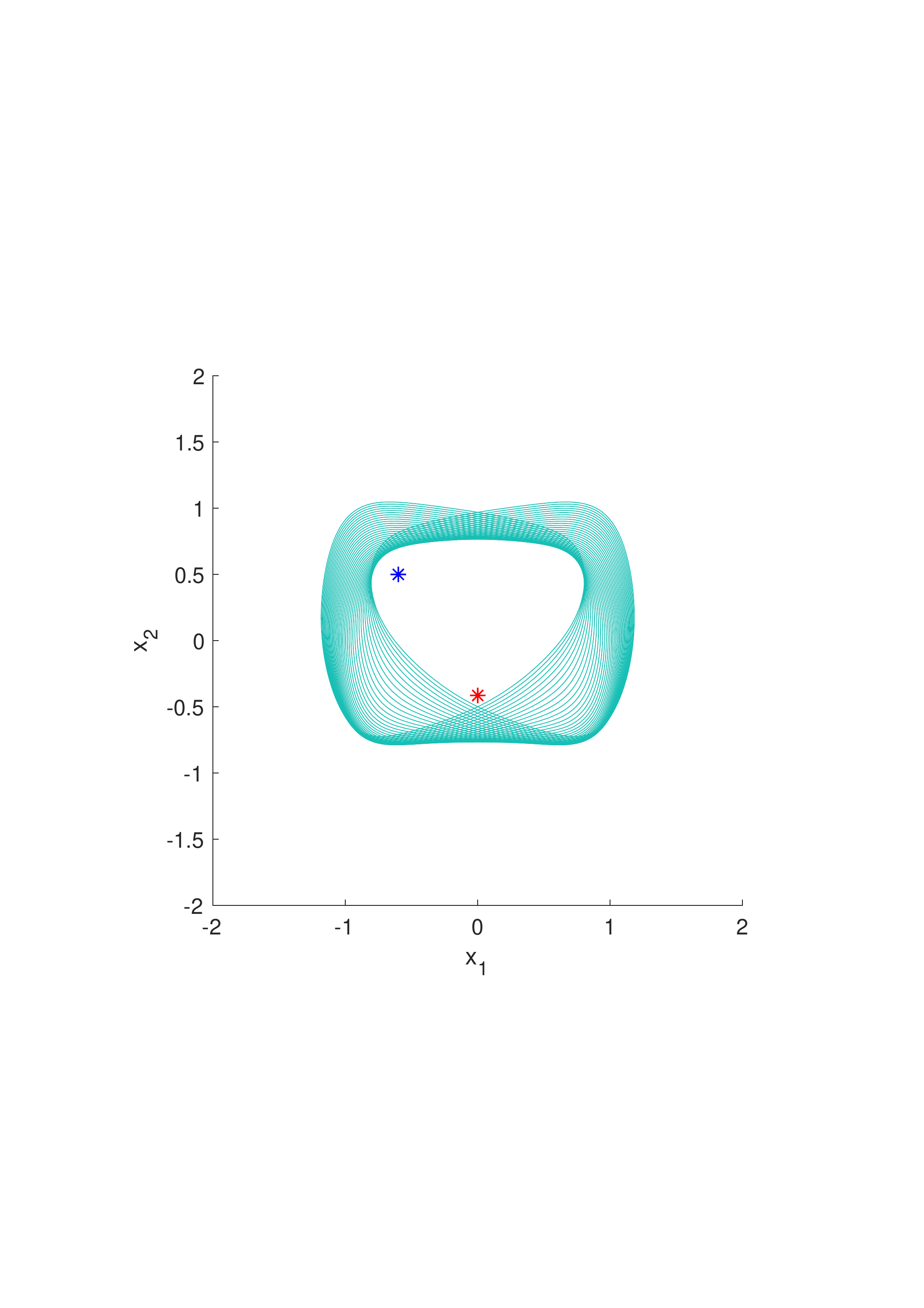}
}
\scalebox{0.45}{
	\includegraphics[trim=30 200 80 200,clip]{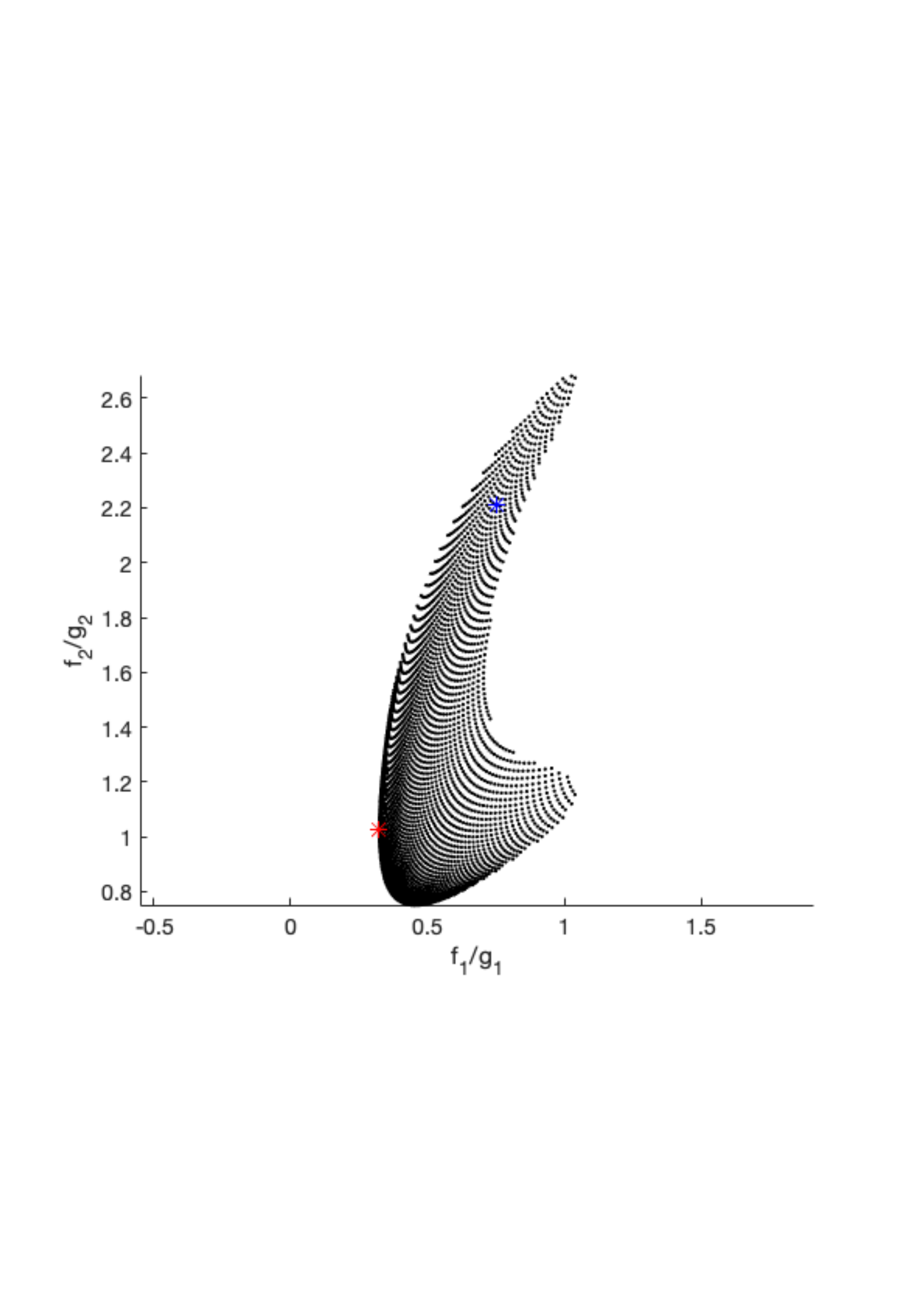}
}
	\caption{\label{fig::case3} The feasible set $\F$ (left) and its
	image (right) in the example of {\sf Case} III.}
\end{figure}

\vskip 5pt
\noindent{\sf Case} IV:
Let $h_1(x_1,x_2,x_3)$ be the polynomial in \eqref{eq::nonsosconvex} and
	\[
		\F=\{(x_1,x_2)\in\RR^2\mid p(x_1,x_2,y_1,y_2)\le 0,\quad
		\forall y\in \Y\},
	\]
	where
	$p(x_1,x_2,y_1,y_2)=(h_1(x_1,x_2,1)/100-1)-y_1y_2(x_1+x_2)$
	and
	\[
		\Y=\{(y_1,y_2)\in\RR^2\mid 1-y_1^2-y_2^2\ge 0\}.
	\]
	Clearly, $p(x,y)$ is convex but not s.o.s-convex for every $y\in
	\Y$.
	We illustrate $\F$ in Figure \ref{fig::case4} (left).
	
	Consider the problem
	\begin{equation}\label{eq::ex4}
	 {\rm Min}_{\RR^{2}_+}\left\{\left(\frac{f_1}{g_1},\frac{f_2}{g_2}\right)
	 :=\left(\frac{x_1^2+x_2^2+1}{-x_2^2+x_1+4},\
	\frac{x_1^2+x_2}{x_1+x_2+2}\right)\Big|\ x \in \F\right\}.
\end{equation}
To depict the image of $\F$ in the aforementioned way, we remark that $\F$
is in fact the area enclosed by the two curves
$p\left(x_1,x_2,\pm\frac{\sqrt{2}}{2},\pm\frac{\sqrt{2}}{2}\right)=0$. Hence, it
is easy to check whether a given point is in $\F$.
Then the image of $\F$ can be easily depicted in Figure \ref{fig::case4} (right).
Clearly, ($\mathrm{P}_1$) is in {\sf Case} 4. 
Again, we only need to solve ($\mathrm{P}_1$).
We let the initial point be $u^{(0)}=(-0.5,0.5)$ in Algorithm
\ref{al::main} and solve ($\mathrm{P}_1$) by the SDP relaxations for
{\sf Case} 4.
We check if the rank condition \eqref{eq::rk2}
holds for the order initialized from $4$.
Similarly to {\sf Case} III, when
$k=4$ and $k'=4$, the rank condition \eqref{eq::rk2} holds for  a
minimizer $\mL_4^{\star}$ of $r^{\dsdp}_4$.
By Theorem \ref{th::sdp34} (ii), the point
$u^{\star}:=\frac{\mL_4^{\star}(x)}{\mL_4^{\star}(1)}=(0.1231,0.000)$
is an efficient solution to
\eqref{eq::ex4}.
These points $u^{(0)}, u^{\star}$ and their images are marked in Figure
\ref{fig::case4}. \qed
\begin{figure}
	\centering
\scalebox{0.45}{
	\includegraphics[trim=80 200 80 200,clip]{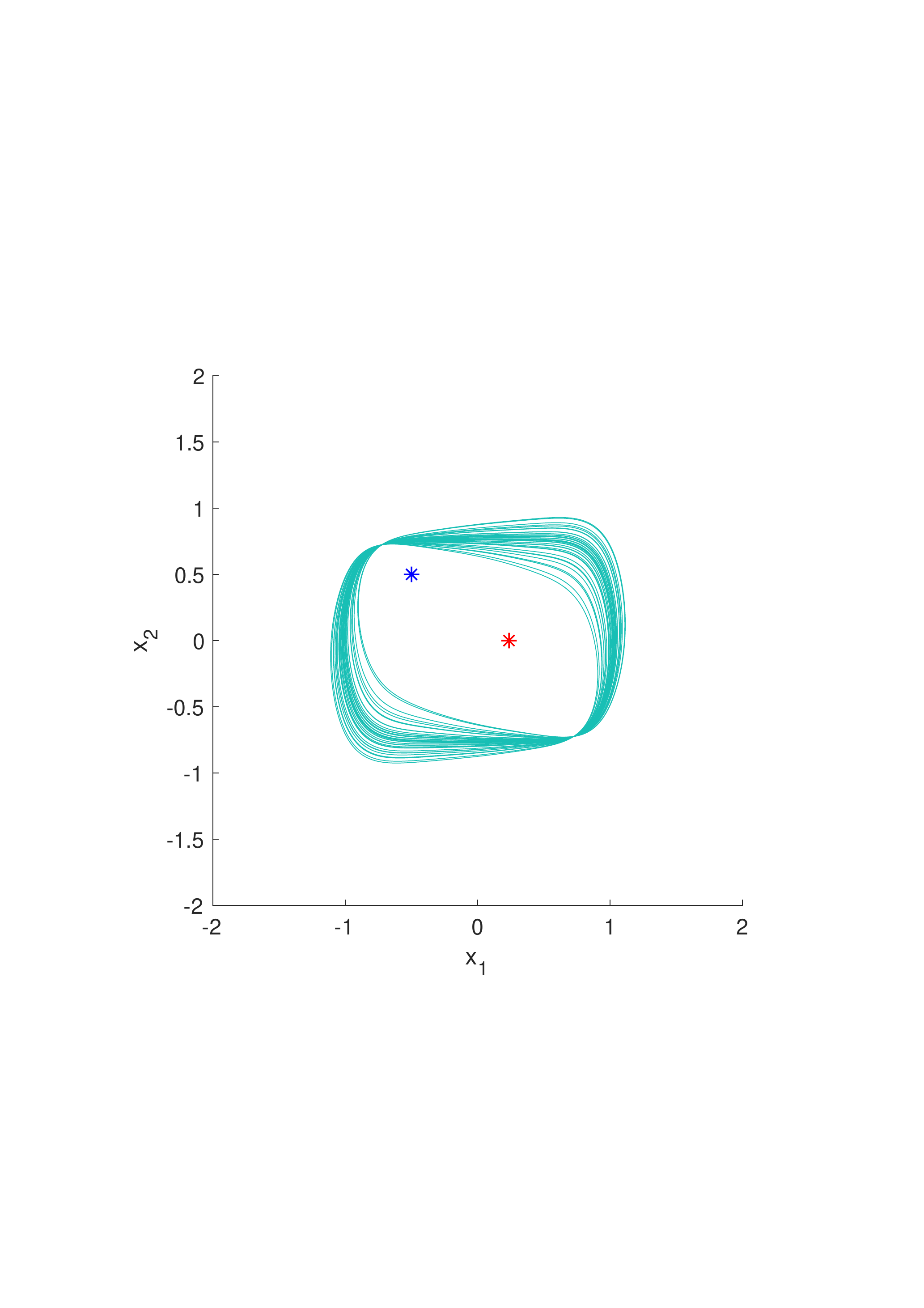}
}
\scalebox{0.45}{
	\includegraphics[trim=30 200 80 200,clip]{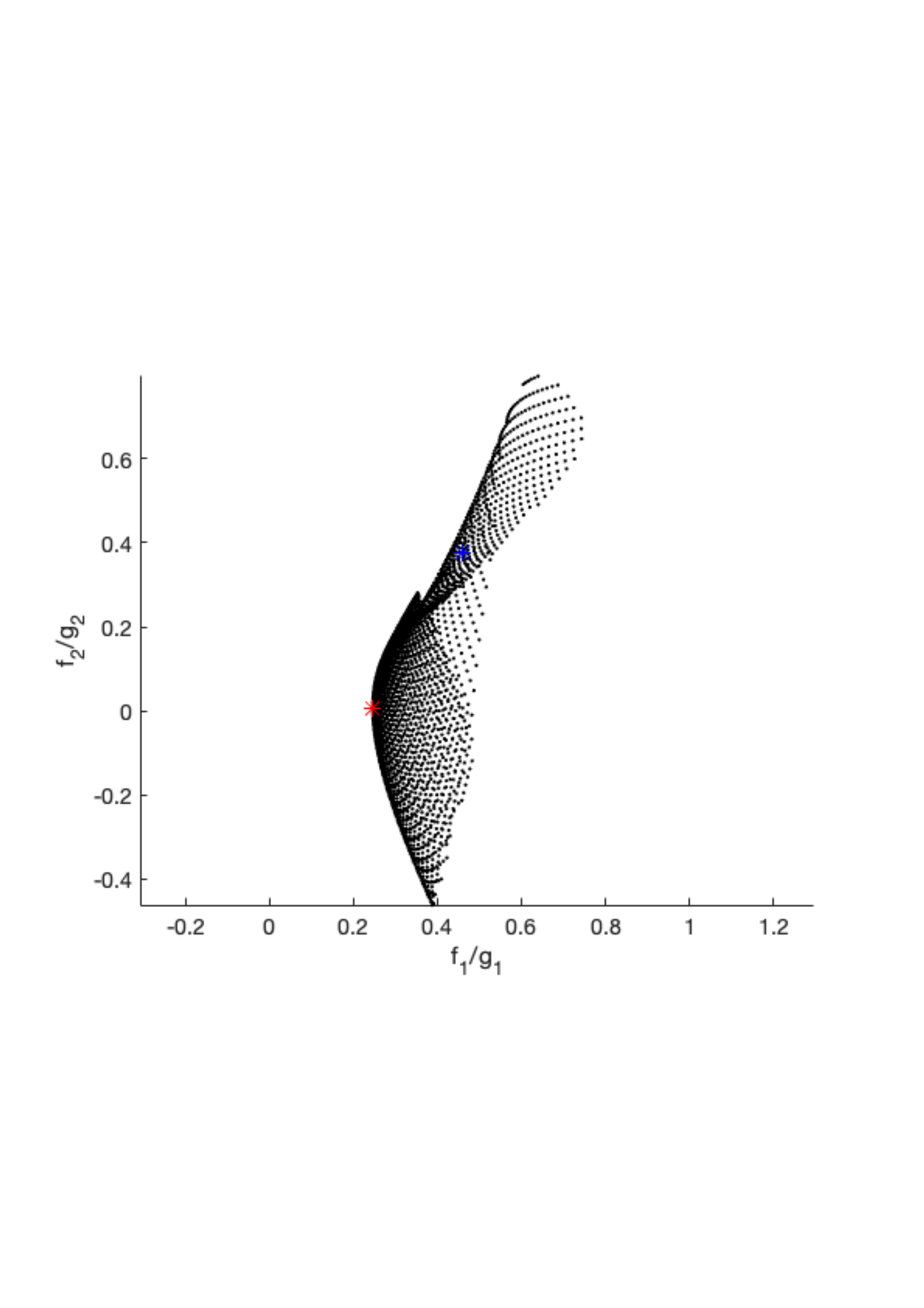}
}
	\caption{\label{fig::case4} The feasible set $\F$ (left) and its
	image (right) in the example of {\sf Case} IV.}
\end{figure}}
\end{example}

\section{Conclusions}\label{sec::conclusions}
We focus on solving a class of FSIPP problems with some
convexity/concavity assumption on the function data. We reformulate
the problem to a conic optimization
problem and provide a characteristic cone constraint qualification for
convex SIP problems to bring sum-of-squares structures in the
reformulation.
In this framework, we first present a hierarchy of SDP relaxations with
asymptotic convergence for the FSIPP problem whose index set is
defined by finitely many polynomial inequalities. 
Next, we study four cases of the FSIPP problems for which the SDP
relaxation is exact or has finite convergence and at least one
minimizer can be extracted. This approach is then applied to the four
corresponding multi-objective cases to find efficient solutions.

\section*{acknowledgements}
The authors are very grateful for the comments of two anonymous
referees which helped to improve the presentation.
The authors wish to thank Guoyin Li for
many helpful comments.
Feng Guo is supported by the Chinese National Natural Science
Foundation under grant 11571350, the Fundamental Research
Funds for the Central Universities.
Liguo Jiao is supported by Jiangsu Planned Projects for Postdoctoral Research
Funds 2019 (no. 2019K151).

\appendix
\section{}\label{appendix}

Consider the general convex semi-infinite programming problem
\begin{equation}\label{eq::csip}
	\left\{
		\begin{aligned}
			r^{\star}:=\inf_{x\in\RR^m}&\ h(x)\\
			\text{s.t.}&\ \psi_1(x)\le 0,\ldots,\psi_s(x)\le 0,\\
			&\ p(x,y)\le 0,\quad\forall\ y\in
			\Y\subset\RR^n,
		\end{aligned}\right.
\end{equation}
where $h(\cdot)$, $\psi_1(\cdot),\ldots,\psi_s(\cdot)$,
$p(\cdot,y):\RR^m\rightarrow\RR$ for
any $y\in\Y$, are continuous and convex functions (not necessarily
polynomials), $p(x,y):\RR^m\times\RR^n \rightarrow \RR$ is a lower
semicontinuous function such that $p(x,\cdot): \RR^n\rightarrow \RR$
is continuous for all $x \in \RR^m$, the index set $\Y$ is an
arbitrary compact subset in $\RR^n$. We denote by $\K$ the feasible
region of \eqref{eq::csip} and assume that $\K\neq\emptyset$.
Inspired by Jeyakumar and Li \cite{Jeyakumar2010},
we next provide a constraint qualification weaker than the Slater
condition for \eqref{eq::csip} to guarantee the strong duality and the
attachment of the solution in the dual problem.

Denote by $\mathcal{M}(\Y)$ the set of nonnegative measures supported
on $\Y$.  We first show that for all $\mu \in
\mathcal{M}(\Y),$
\[
	\Phi_{\mu}: x \mapsto \int_{\Y} p(x,y) d \mu(y)
\]
is a continuous and convex function. Indeed, it is clear that
this function always takes finite value due to the continuity
assumption of $p(x,\cdot)$  for all $x \in \RR^m$. Now, by Fatou's
lemma, for any $x^{(k)} \rightarrow x,$
\[
	\liminf_{k \rightarrow \infty} \int_{\Y} p(x^{(k)},y) d \mu(y) \ge \int_{\Y} p(x,y) d \mu(y).
\]
This shows that $\Phi_{\mu}$ is a lower semicontinuous function. Also, as
$p(\cdot,y)$ is convex and $\mu \in \mathcal{M}(\Y)$,
it is easy to see that $\Phi_{\mu}$ is also convex for all $\mu \in
\mathcal{M}(\Y)$. Thus, $\Phi_{\mu}$ is a proper lower semicontinuous convex
function which always takes finite value, and so, is continuous.

The Lagrangian dual of \eqref{eq::csip} reads
\begin{equation}\label{eq::dcsip2}
	\max_{\mu\in\mathcal{M}(\Y),\eta\in\RR_+^s} \inf_{x\in\RR^m} \left\{h(x)+\int_{\Y}
		p(x,y)d\mu(y)+\sum_{j=1}^s\eta_j\psi_j(x)\right\}.
\end{equation}

Recall the notation in \eqref{eq::cones},
\[
	\mathcal{C}_1=\bigcup_{\mu\in\mathcal{M}(\Y)}\epi\left(\int_{\Y}
	p(\cdot,y)d\mu(y)\right)^*\ \text{and}\
	\mathcal{C}_2=\bigcup_{\eta\in\RR_+^s}\epi\left(\sum_{j=1}^s\eta_j\psi_j\right)^*.
\]
We say that the semi-infinite characteristic cone constraint
qualification (SCCCQ) holds for $\K$ if
$\mathcal{C}_1+\mathcal{C}_2$ is closed.

\begin{proposition}\label{prop::cc}
	The set $\mathcal{C}_1+\mathcal{C}_2$ is a convex cone.
\end{proposition}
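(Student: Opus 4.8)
The plan is to reduce everything to showing that $\mathcal{C}_1$ and $\mathcal{C}_2$ are \emph{each} convex cones, since the Minkowski sum of two convex cones in $\RR^m\times\RR$ is again a convex cone: it contains the origin ($0=0+0$), it is convex (a sum of convex sets), and it is a cone because $\alpha(\mathcal{C}_1+\mathcal{C}_2)=\alpha\mathcal{C}_1+\alpha\mathcal{C}_2=\mathcal{C}_1+\mathcal{C}_2$ for every $\alpha>0$. Thus it suffices to treat $\mathcal{C}_1$ in detail and observe that $\mathcal{C}_2$ is handled by the identical argument with $\mathcal{M}(\Y)$ replaced by $\RR_+^s$.

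The only tools needed are two elementary facts about conjugation together with the cone structure of the index sets of the unions. First, for any function $h$ and $\alpha>0$ one has $(\alpha h)^*(\xi)=\alpha\,h^*(\xi/\alpha)$, hence $\epi(\alpha h)^*=\alpha\,\epi h^*$. Second, for any two functions $h_1,h_2$ one has the subadditivity inequality $(h_1+h_2)^*(u_1+u_2)\le h_1^*(u_1)+h_2^*(u_2)$, which is immediate from the definition of the conjugate. In addition I will use that $\mu\mapsto\Phi_\mu:=\int_\Y p(\cdot,y)\,d\mu(y)$ is linear, i.e.\ $\Phi_{\mu_1+\mu_2}=\Phi_{\mu_1}+\Phi_{\mu_2}$ and $\Phi_{\alpha\mu}=\alpha\Phi_\mu$, and that $\mathcal{M}(\Y)$ is closed under addition and under multiplication by nonnegative scalars; the analogous facts hold for $\eta\mapsto\sum_{j=1}^s\eta_j\psi_j$ and the cone $\RR_+^s$.

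With these in hand, the verification for $\mathcal{C}_1$ runs as follows. Taking $\mu=0$ (the zero measure) gives $\Phi_0\equiv 0$, whose conjugate is the indicator of $\{0\}$, so $\{0\}\times\RR_+\subseteq\mathcal{C}_1$; in particular $0\in\mathcal{C}_1$ and $\mathcal{C}_1\ne\emptyset$. If $(w,r)\in\epi\Phi_\mu^*\subseteq\mathcal{C}_1$ and $\alpha>0$, then $\alpha(w,r)\in\alpha\,\epi\Phi_\mu^*=\epi(\alpha\Phi_\mu)^*=\epi\Phi_{\alpha\mu}^*\subseteq\mathcal{C}_1$, using $\alpha\mu\in\mathcal{M}(\Y)$. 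Finally, given $(w_1,r_1)\in\epi\Phi_{\mu_1}^*$ and $(w_2,r_2)\in\epi\Phi_{\mu_2}^*$, set $\mu:=\mu_1+\mu_2\in\mathcal{M}(\Y)$; then $\Phi_\mu=\Phi_{\mu_1}+\Phi_{\mu_2}$ and the subadditivity inequality give
\[
\Phi_\mu^*(w_1+w_2)\le\Phi_{\mu_1}^*(w_1)+\Phi_{\mu_2}^*(w_2)\le r_1+r_2 ,
\]
so $(w_1+w_2,r_1+r_2)\in\epi\Phi_\mu^*\subseteq\mathcal{C}_1$. Hence $\mathcal{C}_1$ contains the origin and is closed under addition and under multiplication by nonnegative scalars, i.e.\ it is a convex cone; replacing $\Phi_\mu$ by $\sum_{j=1}^s\eta_j\psi_j$ and $\mathcal{M}(\Y)$ by $\RR_+^s$ shows the same for $\mathcal{C}_2$, and the reduction in the first paragraph finishes the proof.

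I do not expect a genuine obstacle here: the statement is ``soft'' and is purely the algebra of conjugation combined with the convex-cone structure of $\mathcal{M}(\Y)$ and $\RR_+^s$, and one does not even need the (automatic) convexity of the individual epigraphs $\epi\Phi_\mu^*$. The one point worth keeping in mind — and the reason the argument goes through $\mathcal{C}_1$ and $\mathcal{C}_2$ separately rather than through the single family $\bigcup_{\mu,\eta}\epi\bigl(\Phi_\mu+\sum_{j}\eta_j\psi_j\bigr)^*$ — is that identifying $\mathcal{C}_1+\mathcal{C}_2$ with that single family would require the infimal convolution of $\Phi_\mu^*$ and $\bigl(\sum_j\eta_j\psi_j\bigr)^*$ to be exact and attained, which is precisely the closedness/qualification phenomenon that SCCCQ is designed to capture and cannot be assumed at this stage.
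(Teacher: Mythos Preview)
Your proof is correct and follows the same high-level strategy as the paper: reduce to showing that $\mathcal{C}_1$ and $\mathcal{C}_2$ are each convex cones. The paper handles $\mathcal{C}_2$ by citing a reference and, for the convexity of $\mathcal{C}_1$, invokes Carath\'eodory's theorem together with the \emph{equality} $\sum_\ell \epi\,\Phi_{\mu_\ell}^* = \epi\bigl(\sum_\ell \Phi_{\mu_\ell}\bigr)^*$ (which it obtains from continuity of the $\Phi_{\mu_\ell}$ and cited infimal-convolution results). Your route is more elementary: you show closure under addition directly via the subadditivity inequality $(h_1+h_2)^*(u_1+u_2)\le h_1^*(u_1)+h_2^*(u_2)$, so you need only the inclusion $\epi\,\Phi_{\mu_1}^*+\epi\,\Phi_{\mu_2}^*\subseteq\epi\,\Phi_{\mu_1+\mu_2}^*$, and Carath\'eodory is unnecessary since a cone closed under pairwise addition is automatically convex. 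This buys you a self-contained argument with no external citations and no appeal to continuity of $\Phi_\mu$, at the cost of not identifying the Minkowski sum of epigraphs exactly; the paper's version, on the other hand, makes that exact identification explicit, which is conceptually tidier but overkill for the stated proposition.
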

\begin{proof}
	As $\mathcal{C}_2$
is a convex cone due to \cite[Theorem 2.123]{DD}, we only need to
prove that $\mathcal{C}_1$
is a convex cone.

We first prove that $\mathcal{C}_1$
is a cone.
It is clear that $(0,0)\in\mathcal{C}_1$.
Let $\lambda>0$ and $(\xi,\alpha)\in\mathcal{C}_1$
Then, there exists $\mu'\in\mathcal{M}(\Y)$ such that
$(\xi,\alpha)\in\epi\left(\int_{\Y} p(\cdot,y)d\mu'(y)\right)^*$. Let
$\td{\mu}=\lambda\mu'\in\mathcal{M}(\Y)$. As $\Phi_{\mu'}$ is
continuous and convex, by \cite[Theorem 2.123 (iv)]{DD},
\[
	\lambda(\xi,\alpha)\in\lambda\epi\left(\int_{\Y}
	p(\cdot,y)d\mu'(y)\right)^*=\epi\left(\lambda\int_{\Y}
	p(\cdot,y)d\mu'(y)\right)^*=\epi\left(\int_{\Y}
	p(\cdot,y)d\td{\mu}(y)\right)^*.
\]
Hence, $\lambda(\xi,\alpha)\in\mathcal{C}_1$.

Now it suffices to prove that
$\co(\mathcal{C}_1)\subseteq\mathcal{C}_1$.
Let $(\xi,\alpha)\in\co(\mathcal{C}_1$).
As $\mathcal{C}_1$
is a cone in $\RR^{m+1}$, from the Carathedory theorem, there
exist $(\xi_{\ell},\alpha_{\ell})\in\mathcal{C}_1$,
${\ell}=1,\ldots,m+1$, such that
$(\xi,\alpha)=\sum_{{\ell}=1}^{m+1}(\xi_{\ell},\alpha_{\ell})$. For each
${\ell}=1,\ldots,m+1$, there exists $\mu_{\ell}\in\mathcal{M}(\Y)$ such that
$(\xi_{\ell},\alpha_{\ell})\in\epi\left(\int_{\Y} p(\cdot,y)d\mu_{\ell}(y)\right)^*$. Note
that $\Phi_{\mu_{\ell}}$ is continuous for each ${\ell}=1,\ldots,m+1$. Let
$\hat{\mu}=\sum_{{\ell}=1}^{m+1}\mu_{\ell}\in\mathcal{M}(\Y)$, then by
\cite[Theorem 2.123 (i) and Proposition 2.124]{DD},
\[
	\begin{aligned}
		(\xi,\alpha)=\sum_{{\ell}=1}^{m+1}(\xi_{\ell},\alpha_{\ell})\in\sum_{{\ell}=1}^{m+1}\epi\left(\int_{\Y}
		p(\cdot,y)d\mu_{\ell}(y)\right)^*&=\epi\left(\sum_{{\ell}=1}^{m+1}\int_{\Y} p(\cdot,y)d\mu_{\ell}(y)\right)^*\\
		&=\epi\left(\int_{\Y}
		p(\cdot,y)d\hat{\mu}(y)\right)^*\subset\mathcal{C}_1.
	\end{aligned}
\]
The proof is completed.
\end{proof}

\begin{theorem}\label{th::alt}
	Exactly one of the following two statements holds$:$
	\begin{enumerate}[\upshape (i)]
		\item $(\exists x\in\RR^m)\ h(x)<0,\ \psi_j(x) \le 0,\
			j=1,\ldots,s, \ p(x,y)\le 0, \forall\ y\in\Y;$
		\item $(0,0)\in\epi\ h^*+\cl(\mathcal{C}_1+\mathcal{C}_2).$
	\end{enumerate}
\end{theorem}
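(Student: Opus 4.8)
The plan is to prove the two implications of ``exactly one'' separately. That (i) and (ii) cannot hold simultaneously is a short Young--Fenchel computation. The content lies in the converse: assuming (i) fails, I will reduce the assertion --- via the indicator function $\delta_{\K}$ of the feasible set $\K$ --- to a non-homogeneous Farkas lemma for the convex system defining $\K$, namely that, for $(\xi,\alpha)\in\RR^{m+1}$,
\[
	\langle\xi,x\rangle\le\alpha\ \ \text{for all}\ x\in\K
	\quad\Longleftrightarrow\quad
	(\xi,\alpha)\in\cl(\mathcal{C}_1+\mathcal{C}_2),
\]
and I will prove the non-trivial (``$\Rightarrow$'') direction of this lemma by a separation argument in $\RR^{m+1}$.

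For incompatibility, suppose $x_0$ witnesses (i) and, at the same time, $(0,0)=(\xi,\alpha)+(\zeta,\beta)$ with $(\xi,\alpha)\in\epi h^*$ and $(\zeta,\beta)\in\cl(\mathcal{C}_1+\mathcal{C}_2)$. Choose $(\zeta_k,\beta_k)\in\mathcal{C}_1+\mathcal{C}_2$ with $(\zeta_k,\beta_k)\to(\zeta,\beta)$; writing each as a sum of an element of $\epi(\Phi_{\mu_k})^*$ and an element of $\epi\bigl(\sum_{j=1}^s\eta^{(k)}_j\psi_j\bigr)^*$ and using $\Phi_{\mu_k}(x_0)=\int_{\Y}p(x_0,y)\,d\mu_k(y)\le 0$ together with $\psi_j(x_0)\le 0$, the Young--Fenchel inequality gives $\langle\zeta_k,x_0\rangle\le\beta_k$, hence $\langle\zeta,x_0\rangle\le\beta$ in the limit; combining with $\langle\xi,x_0\rangle\le h(x_0)+\alpha$, $\xi+\zeta=0$ and $\alpha+\beta=0$ yields $0\le h(x_0)<0$, a contradiction. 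For the converse, ``(i) fails'' says $h\ge 0$ on $\K$, i.e.\ $(h+\delta_{\K})^*(0)=-\inf_{x\in\K}h(x)\le 0$, i.e.\ $(0,0)\in\epi(h+\delta_{\K})^*$. Since $h$ is finite-valued, hence continuous, on $\RR^m$ (so the relevant constraint qualification holds) and $\K=\mathrm{dom}\,\delta_{\K}\neq\emptyset$, the conjugate-epigraph calculus (\cite[Theorem 2.123]{DD}) gives $\epi(h+\delta_{\K})^*=\epi h^*+\epi\delta_{\K}^*$; therefore it suffices to show $\epi\delta_{\K}^*\subseteq\cl(\mathcal{C}_1+\mathcal{C}_2)$, which is precisely the ``$\Rightarrow$'' part of the Farkas lemma above (the ``$\Leftarrow$'' part goes exactly as in the incompatibility argument, so in fact $\epi\delta_{\K}^*=\cl(\mathcal{C}_1+\mathcal{C}_2)$).

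To prove the lemma, assume $\langle\xi,x\rangle\le\alpha$ on $\K$ but $(\xi,\alpha)\notin A:=\cl(\mathcal{C}_1+\mathcal{C}_2)$. By Proposition~\ref{prop::cc}, $A$ is a nonempty closed convex cone, so strong separation yields $(z,r)\neq(0,0)$ with $\langle z,\xi\rangle+r\alpha>0$ and $\langle z,u\rangle+rv\le 0$ for every $(u,v)\in A$. Testing this on $\{0\}\times\RR_+\subseteq\mathcal{C}_2$ (zero multiplier) forces $r\le 0$. I then use $\epi p(\cdot,y)^*\subseteq\mathcal{C}_1$ (take $\mu$ the Dirac mass at $y$), $\epi\psi_j^*\subseteq\mathcal{C}_2$, and the fact that $p(\cdot,y)$ and $\psi_j$, being finite convex functions, coincide with their biconjugates. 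If $r<0$, normalising to $r=-1$ gives $\langle z,u\rangle\le p(\cdot,y)^*(u)$ for all $u$, whence $p(z,y)=p(\cdot,y)^{**}(z)\le 0$ for every $y\in\Y$ and likewise $\psi_j(z)\le 0$, so $z\in\K$ and $\langle\xi,z\rangle\le\alpha$, contradicting $\langle z,\xi\rangle-\alpha>0$. If $r=0$, then $\langle z,u\rangle\le 0$ on $\mathrm{dom}\,p(\cdot,y)^*$, and since $p(x,y)=\sup_u\bigl(\langle u,x\rangle-p(\cdot,y)^*(u)\bigr)$ this forces $p(x+tz,y)\le p(x,y)$ and $\psi_j(x+tz)\le\psi_j(x)$ for all $x$ and all $t\ge 0$; hence, fixing any $x_0\in\K$, the whole ray $\{x_0+tz:t\ge 0\}$ lies in $\K$, so $\langle\xi,x_0+tz\rangle\le\alpha$ for all $t\ge 0$, forcing $\langle\xi,z\rangle\le 0$ and contradicting $\langle z,\xi\rangle>0$. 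Either way we obtain the desired contradiction. I expect this separation step --- and in particular the $r=0$ case, where one must read off from $\langle z,\cdot\rangle\le 0$ on $\mathrm{dom}\,p(\cdot,y)^*$ that $z$ is a common recession direction of all the sublevel sets $\{p(\cdot,y)\le 0\}$ and $\{\psi_j\le 0\}$ --- to be the main obstacle, together with the bookkeeping between $\mathcal{C}_1$, $\mathcal{C}_2$ and the limiting argument through the closure.
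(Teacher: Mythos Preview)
Your argument is correct, and the overall skeleton --- reformulate ``not (i)'' as $(0,0)\in\epi(h+\delta_{\K})^*$ and then split off $\epi h^*$ using the continuity of $h$ --- matches the paper's. The genuine difference is in how the remaining identity $\epi\,\delta_{\K}^*=\cl(\mathcal{C}_1+\mathcal{C}_2)$ is obtained. The paper does not prove this by hand: it writes $\K=\K_1\cap\K_2$ with $\delta_{\K_i}$ the supremum of the corresponding parametrised family, invokes \cite[Lemma~2.2]{Li_Ng} to get $\epi(\delta_{\K_1})^*=\cl(\mathcal{C}_1)$ and $\epi(\delta_{\K_2})^*=\cl(\mathcal{C}_2)$ directly, and then applies the epigraph sum rule once more to $\delta_{\K_1}+\delta_{\K_2}$. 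Your route instead establishes the same identity from scratch via Hahn--Banach separation in $\RR^{m+1}$, reading off from the separating functional either a feasible point (case $r<0$) or a common recession direction of the sublevel sets (case $r=0$). This makes your proof self-contained --- you effectively reprove the Li--Ng lemma in the special case needed --- at the cost of the extra case analysis; the paper's version is shorter but outsources precisely that step to the cited reference. Both the incompatibility direction (which the paper gets for free from the chain of equivalences) and your handling of the closure are fine.
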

\begin{proof}
Let
\[
	\K_1:=\left\{x\in\RR^m : \int_{\Y} p(x,y)d\mu(y)\le 0,\ \forall
		\mu\in\mathcal{M}(\Y)\right\},
\]
and
\[
	\K_2:=\left\{x\in\RR^m : \sum_{j=1}^s\eta_j\psi_j(x)\le 0,\
\forall \eta\in\RR_+^s\right\}.
\]
It is easy to see that $\K=\K_1\cap\K_2$ and the indicator
functions of $\K_1$ and $\K_2$ are
\[
	\delta_{\K_1}(x)=\sup_{\mu\in\mathcal{M}(\Y)}\int_{\Y}
	p(x,y)d\mu(y)\quad\text{and}\quad
	\delta_{\K_2}(x)=\sup_{\eta\in\RR_+^s} \sum_{j=1}^s\eta_j\psi_j(x).
\]
By Proposition \ref{prop::cc} and \cite[Lemma 2.2]{Li_Ng}, it holds
that
\[
	\epi\
	(\delta_{\K_1})^*=\cl(\mathcal{C}_1)\quad\text{and}\quad\epi\
	(\delta_{\K_2})^*=\cl(\mathcal{C}_2).
\]
Now, we show that [not (i)] is equivalent to [(ii)]. In fact,
\[
	\begin{aligned}
		\text{[not (i)]}\ &\Leftrightarrow\ h(x)\ge 0,\ \forall
		x\in\K_1\cap\K_2  \\
&\Leftrightarrow\
\inf_{x\in\RR^m}\{h(x)+\delta_{\K_1}(x)+\delta_{\K_2}(x)\}\ge 0\\
&\Leftrightarrow\ (0,0)\in\epi (h+\delta_{\K_1}+\delta_{\K_2})^*
	\end{aligned}
\]
By the continuity of $h$ and \cite[Theorem 2.123 (i)]{DD}, we have
\[
	\begin{aligned}
		\epi\ (h+\delta_{\K_1}+\delta_{\K_2})^* &=\epi\ h^*+\epi\ (\delta_{\K_1}+\delta_{\K_2})^* \\
&=\epi\ h^*+ {\rm cl}(\epi\ (\delta_{\K_1})^*+\epi\ (\delta_{\K_2})^*) \\
&=\epi\ h^*+{\rm cl}(\cl(\mathcal{C}_1)+\cl(\mathcal{C}_2)) \\
&=\epi\ h^*+{\rm cl}(\mathcal{C}_1+\mathcal{C}_2)).
	\end{aligned}
\]
Hence, the conclusion follows.
\end{proof}

\begin{theorem}\label{th::cq}
	Suppose that the {\rm SCCCQ} holds for \eqref{eq::csip}$,$
	then there exist $\mu^{\star}\in\mathcal{M}(\Y)$ and
$\eta^{\star}\in\RR_+^s$ such that
\[
	r^{\star}=\inf_{x\in\RR^m}\left\{h(x)+\int_{\Y}
		p(x,y)d\mu^{\star}(y)+\sum_{j=1}^s\eta^{\star}_j\psi_j(x)\right\},
\]
where $r^{\star}$ is the optimal value of \eqref{eq::csip}.
\end{theorem}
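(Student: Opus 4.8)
The plan is to deduce strong duality from the Gordan--type theorem of alternatives in Theorem~\ref{th::alt}, applied not to $h$ but to the \emph{shifted} objective $\widetilde h := h - r^\star$. First I would observe that, by the very definition of $r^\star$ as the optimal value of \eqref{eq::csip}, one has $\widetilde h(x) = h(x) - r^\star \ge 0$ for every feasible $x$; equivalently, there is no $x\in\RR^m$ with $\widetilde h(x)<0$, $\psi_j(x)\le 0$ for all $j$, and $p(x,y)\le 0$ for all $y\in\Y$. Since $\widetilde h$ is again continuous and convex, alternative~(i) of Theorem~\ref{th::alt} (with $h$ replaced by $\widetilde h$) fails, hence alternative~(ii) holds: $(0,0)\in\epi\,\widetilde h^* + \cl(\mathcal{C}_1+\mathcal{C}_2)$. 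This is the step where the SCCCQ is used: because $\mathcal{C}_1+\mathcal{C}_2$ is closed, the closure is superfluous, and we get $(0,0)\in\epi\,\widetilde h^* + \mathcal{C}_1+\mathcal{C}_2$.

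Next I would translate $\epi\,\widetilde h^*$ back in terms of $\epi\,h^*$. From $\widetilde h^*(\xi)=\sup_{x}\{\langle\xi,x\rangle - h(x) + r^\star\} = h^*(\xi)+r^\star$ we obtain $\epi\,\widetilde h^* = \epi\,h^* + \{(0,r^\star)\}$, so the membership above becomes $(0,-r^\star)\in \epi\,h^* + \mathcal{C}_1+\mathcal{C}_2$. Unwinding it, there are $(\xi_0,\beta_0)\in\epi\,h^*$, $(\xi_1,\alpha_1)\in\mathcal{C}_1$ and $(\xi_2,\alpha_2)\in\mathcal{C}_2$ with $\xi_0+\xi_1+\xi_2=0$ and $\beta_0+\alpha_1+\alpha_2=-r^\star$. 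By the definition of the cones in \eqref{eq::cones}, $(\xi_1,\alpha_1)\in\epi\,\Phi_{\mu^\star}^*$ for some $\mu^\star\in\mathcal{M}(\Y)$, where $\Phi_{\mu^\star}(x)=\int_\Y p(x,y)\,d\mu^\star(y)$, and $(\xi_2,\alpha_2)\in\epi\,(\sum_{j}\eta^\star_j\psi_j)^*$ for some $\eta^\star\in\RR_+^s$; these are the multipliers we want.

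To finish I would invoke the Fenchel--Young inequality three times: for every $x\in\RR^m$,
\[
  h(x)\ge \langle\xi_0,x\rangle-\beta_0,\quad
  \Phi_{\mu^\star}(x)\ge \langle\xi_1,x\rangle-\alpha_1,\quad
  \sum_{j=1}^s\eta^\star_j\psi_j(x)\ge \langle\xi_2,x\rangle-\alpha_2,
\]
where the middle inequality is legitimate because $\Phi_{\mu^\star}$ is a proper continuous convex function (as shown in the appendix). Adding the three and using $\xi_0+\xi_1+\xi_2=0$ and $\beta_0+\alpha_1+\alpha_2=-r^\star$ yields $h(x)+\Phi_{\mu^\star}(x)+\sum_j\eta^\star_j\psi_j(x)\ge r^\star$ for all $x$, so the dual value is $\ge r^\star$. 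The reverse inequality is weak duality: for any feasible $x\in\K$, $\Phi_{\mu^\star}(x)\le 0$ and $\sum_j\eta^\star_j\psi_j(x)\le 0$, hence the dual objective at $x$ is $\le h(x)$, and taking $\inf_{x\in\K}$ gives $\le r^\star$. Combining, the infimum over $\RR^m$ of $h+\Phi_{\mu^\star}+\sum_j\eta^\star_j\psi_j$ equals $r^\star$, which is exactly the asserted identity.

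I expect the main difficulty to be bookkeeping rather than conceptual: one must remember to apply Theorem~\ref{th::alt} to $h-r^\star$ (not $h$), track the one scalar coordinate correctly when passing between $\epi\,\widetilde h^*$ and $\epi\,h^*$, and carefully convert membership in $\mathcal{C}_1$, which is a \emph{union} over all $\mu\in\mathcal{M}(\Y)$, into the existence of a single $\mu^\star$. It should also be noted that the argument tacitly assumes $r^\star$ finite; this is harmless since $\K\neq\emptyset$ gives $r^\star<+\infty$, and if $r^\star=-\infty$ the asserted equality holds trivially by weak duality.
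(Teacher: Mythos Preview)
Your proposal is correct and follows essentially the same route as the paper's own proof: apply Theorem~\ref{th::alt} to the shifted function $h-r^{\star}$, use the SCCCQ to drop the closure, translate $\epi\,(h-r^{\star})^*=\epi\,h^*+\{(0,r^{\star})\}$, decompose $(0,-r^{\star})$ accordingly, and sum the three Fenchel--Young inequalities to obtain the lower bound, the upper bound coming from weak duality. Your handling of the case $r^{\star}=-\infty$ is a harmless addition the paper does not spell out.
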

\begin{proof}
From the weak duality, we have
\[
r^{\star} \ge \max_{\mu\in\mathcal{M}(\Y),\eta\in\RR_+^s} \inf_{x\in\RR^m}
\left\{h(x)+\int_{\Y}
p(x,y)d\mu(y)+\sum_{j=1}^s\eta_j\psi_j(x)\right\}.
\]
As we assume that $\K\neq\emptyset$, $r^{\star}>-\infty$.
Applying Theorem \ref{th::alt} with $h$ replaced by
$\overline{h}$ where $\overline{h}(x)=h(x)-r^{\star}$ for all $x \in \RR^m$,
and making use of the SCCCQ, one has
\begin{eqnarray*}
	(0,0) &\in & \epi\ \overline{h}^*+\bigcup_{\mu\in\mathcal{M}(\Y)}\epi\left(\int_{\Y}
	p(\cdot,y)d\mu(y)\right)^*+\bigcup_{\eta\in\RR_+^s}\epi\left(\sum_{j=1}^s\eta_j\psi_j\right)^* \\
	& = & \epi\ h^*+ (0,r^{\star})+ \bigcup_{\mu\in\mathcal{M}(\Y)}\epi\left(\int_{\Y}
	p(\cdot,y)d\mu(y)\right)^*+\bigcup_{\eta\in\RR_+^s}\epi\left(\sum_{j=1}^s\eta_j\psi_j\right)^* \ .
\end{eqnarray*}
Then, there exist $(\xi,\alpha)\in \epi\ h^*$,
$\mu^{\star}\in\mathcal{M}(\Y)$, $(\tau,\beta)\in\epi\left(\int_{\Y}
	p(\cdot,y)d\mu^{\star}(y)\right)^*$, $\eta^{\star}\in\RR_+^s$,
	$(\zeta,\gamma)\in\epi\left(\sum_{j=1}^s\eta_j^{\star}\psi_j\right)^*$ such that
	$(\xi,\alpha)+(\tau,\beta)+(\zeta,\gamma)=(0,-r^{\star})$. Then, for every $x\in\RR^m$,
	\[
		\begin{aligned}
			&-h(x)-\int_{\Y}
			p(\cdot,y)d\mu^{\star}(y)-\sum_{j=1}^s\eta^{\star}_j\psi_j(x)\\
			&=\langle \xi, x\rangle-h(x)+\langle
			\tau, x\rangle-\int_{\Y}
			p(\cdot,y)d\mu^{\star}(y)+\langle\zeta,x\rangle-\sum_{j=1}^s\eta^{\star}_j\psi_j(x)\\
		&\le h^*(\xi)+\left(\int_{\Y}
		p(\cdot,y)d\mu^{\star}(y)\right)^*(\tau)+\left(\sum_{j=1}^s\eta^{\star}_j\psi_j\right)^*(\zeta)\\
		&\le \alpha+\beta+\gamma=-r^{\star}.
		\end{aligned}
	\]
	Then the conclusion follows by the weak duality.
\end{proof}

Recall that the {\rm Slater} condition holds for \eqref{eq::csip}
if there exists $u\in\RR^m$ such that $p(u,y)<0$ for all $y\in \Y$ and
$\psi_j(u)<0$ for all $j=1,\ldots,s.$ We show that the {\rm Slater}
condition can guarantee the SCCCQ condition.

\begin{proposition}\label{prop::cq2}
If the {\rm Slater} condition holds for \eqref{eq::csip}, then
$\mathcal{C}_1+\mathcal{C}_2$ is closed.
\end{proposition}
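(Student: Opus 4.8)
The plan is to establish the closedness directly by a sequential argument whose engine is the Slater point. First I would record the facts already available in the excerpt: by Proposition~\ref{prop::cc} the set $\mathcal{C}_1+\mathcal{C}_2$ is a convex cone, and (as shown just before that proposition) each $\Phi_\mu$ and each $\sum_{j}\eta_j\psi_j$ is a finite continuous convex function, so the conjugates are proper lsc convex and the members of $\epi(\Phi_\mu)^*$ and $\epi(\sum_j\eta_j\psi_j)^*$ satisfy the Fenchel--Young inequality. Fixing a Slater point $u$, the compactness of $\Y$ together with the continuity and negativity of $p(u,\cdot)$ on $\Y$ make the constant $c_1:=-\max_{y\in\Y}p(u,y)$ strictly positive, and likewise $c_2:=-\max_{1\le j\le s}\psi_j(u)>0$.

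The core of the proof takes $(\xi,\alpha)\in\cl(\mathcal{C}_1+\mathcal{C}_2)$, written as the limit of $(\xi_k,\alpha_k)=(a_k,\beta_k)+(b_k,\gamma_k)$ with $(a_k,\beta_k)\in\epi(\Phi_{\mu_k})^*$ and $(b_k,\gamma_k)\in\epi(\sum_j\eta_{k,j}\psi_j)^*$ for suitable $\mu_k\in\mathcal{M}(\Y)$ and $\eta_k\in\RR_+^s$, and shows $(\xi,\alpha)\in\mathcal{C}_1+\mathcal{C}_2$. Step one is to bound the multiplier sizes: evaluating the two Fenchel--Young inequalities at $x=u$ and adding gives $c_1\mu_k(\Y)+c_2\|\eta_k\|_1\le -\Phi_{\mu_k}(u)-\sum_j\eta_{k,j}\psi_j(u)\le\alpha_k-\langle\xi_k,u\rangle$, whose right-hand side is bounded in $k$, so $\{\mu_k(\Y)\}$ and $\{\|\eta_k\|_1\}$ are bounded. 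Step two is to bound the epigraph coordinates: evaluating the conjugates at $0$ bounds $\beta_k$ and $\gamma_k$ from below in terms of $\mu_k(\Y)$ and $\|\eta_k\|_1$, and since $\beta_k+\gamma_k=\alpha_k$ is bounded both are bounded; then, using that $q(x):=\sup_{y\in\Y}p(x,y)$ is a finite, hence continuous, convex function and therefore bounded on a ball $\{\|x-u\|\le\rho\}$ with $\rho>\|u\|$, evaluating the first Fenchel--Young inequality over that ball bounds $\|a_k\|$, and hence $\|b_k\|=\|\xi_k-a_k\|$.

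With all data bounded I would pass to a subsequence along which $a_k\to a$, $b_k\to b$, $\beta_k\to\beta$, $\gamma_k\to\gamma$ (so $a+b=\xi$ and $\beta+\gamma=\alpha$), $\eta_k\to\eta^\star\in\RR_+^s$, and, by weak-$*$ sequential compactness of bounded families of nonnegative measures on the compact set $\Y$, $\mu_k\rightharpoonup\mu^\star\in\mathcal{M}(\Y)$. To finish, fix $x$: from $\langle a_k,x\rangle-\Phi_{\mu_k}(x)\le\beta_k$, and the convergence $\Phi_{\mu_k}(x)=\int p(x,y)\,d\mu_k(y)\to\int p(x,y)\,d\mu^\star(y)=\Phi_{\mu^\star}(x)$ (which holds because $p(x,\cdot)$ is bounded and continuous on $\Y$), letting $k\to\infty$ yields $\langle a,x\rangle-\Phi_{\mu^\star}(x)\le\beta$; since $x$ is arbitrary, $(a,\beta)\in\epi(\Phi_{\mu^\star})^*\subseteq\mathcal{C}_1$, and the same argument with $\sum_j\eta_{k,j}\psi_j\to\sum_j\eta^\star_j\psi_j$ gives $(b,\gamma)\in\mathcal{C}_2$, so $(\xi,\alpha)\in\mathcal{C}_1+\mathcal{C}_2$.

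The step I expect to be the genuine obstacle is step two, the componentwise boundedness of the splitting. The Slater condition almost immediately controls the total masses $\mu_k(\Y)$ and $\|\eta_k\|_1$, but it does not by itself prevent $(a_k,\beta_k)$ and $(b_k,\gamma_k)$ from escaping to infinity while cancelling; excluding that possibility is precisely where one must exploit that these are conjugate data of \emph{finite} convex functions --- via evaluation at $0$ and on a neighbourhood of the Slater point, together with the local boundedness of $x\mapsto\sup_{y\in\Y}p(x,y)$ --- rather than of merely proper ones.
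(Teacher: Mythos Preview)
Your argument is correct, and in fact your Step~1 (bounding $\mu_k(\Y)$ and $\|\eta_k\|_1$ by directly evaluating the summed Fenchel--Young inequality at the Slater point) is cleaner than the paper's version, which argues by contradiction after normalising the measures and multipliers. The genuine methodological difference is your Step~2 versus the paper's endgame: you work to bound each component $(a_k,\beta_k)$ and $(b_k,\gamma_k)$ of the splitting separately, pass to limits in the two Fenchel--Young inequalities individually, and land directly in $\mathcal{C}_1$ and $\mathcal{C}_2$. The paper never bounds the individual components; it only bounds the multipliers, takes the limit of the \emph{summed} inequality to obtain $(w,v)\in\epi\bigl(\Phi_{\mu^\star}+\sum_j\eta^\star_j\psi_j\bigr)^*$, and then invokes the identity $\epi(f+g)^*=\epi\,f^*+\epi\,g^*$ for finite continuous convex $f,g$ (cf.\ \cite[Theorem~2.123\,(i)]{DD}) to split at the end. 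So the paper sidesteps exactly the obstacle you flagged, at the price of importing that epigraph-sum identity; your route is more self-contained and elementary, at the price of the extra local-boundedness argument for $\Phi_{\mu_k}$ via $q(x)=\sup_{y\in\Y}p(x,y)$.
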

\begin{proof}
	Let $\left(w^{(k)},v^{(k)}\right)\in\mathcal{C}_1+\mathcal{C}_2$ such that
	$\left(w^{(k)},v^{(k)}\right)\rightarrow (w,v)$ and we show that
	$(w,v)\in\mathcal{C}_1+\mathcal{C}_2$. For each $k\in\N$, there
	exist $\left(w^{(k,1)},v^{(k,1)}\right)\in\mathcal{C}_1$ and
	$\left(w^{(k,2)},v^{(k,2)}\right)\in\mathcal{C}_2$ such that
	\[
	\left(w^{(k)},v^{(k)}\right)=\left(w^{(k,1)},v^{(k,1)}\right)+\left(w^{(k,2)},v^{(k,2)}\right).
\]
	Then, for each $k\in\N$, there exists a measure
	$\mu^{(k)}\in\mathcal{M}(\Y)$ and $\eta^{(k)}\in\RR_+^s$ such that
	for any $x\in\RR^m$,
	\begin{equation}\label{eq::epi1}
		v^{(k,1)}\ge \langle w^{(k,1)}, x\rangle-\int_{\Y}
			p(x,y)d\mu^{(k)}(y),
	\end{equation}
	and
	\begin{equation}\label{eq::epi2}
		v^{(k,2)}\ge \langle w^{(k,2)},
			x\rangle-\sum_{j=1}^s\eta_j^{(k)}\psi_j(x).
	\end{equation}
	Therefore, for any $x\in\RR^m$,
	\begin{equation}\label{eq::epi3}
			v^{(k)}\ge \langle w^{(k)}, x\rangle-\int_{\Y}
			p(x,y)d\mu^{(k)}(y)-\sum_{j=1}^s\eta_j^{(k)}\psi_j(x).
	\end{equation}
	Without loss of generality, we may assume that
	$(w,v)\not\in\{0\}\times\RR_+$ since
	$\{0\}\times\RR_+\subset\mathcal{C}_1+\mathcal{C}_2$. Hence, for
	each $k\in\N$, without loss of generality, we may assume that $\int_{\Y}
	d\mu^{(k)}(y)+\sum_{j=1}^s\eta_j^{(k)}>0$ and let
	\[
		\td{\mu}^{(k)}=\frac{\mu^{(k)}}{\int_{\Y}
		d\mu^{(k)}(y)+\sum_{j=1}^s\eta_j^{(k)}},\quad\quad
		\td{\eta}^{(k)}=\frac{\eta^{(k)}}{\int_{\Y}
		d\mu^{(k)}(y)+\sum_{j=1}^s\eta_j^{(k)}}.
	\]
	Then, passing to
	subsequences if necessary, we may assume that there are a
	measure $\nu\in\mathcal{M}(\Y)$ and a point $\xi\in\RR_+^s$ such that
	the sequence $\{\td{\mu}^{(k)}\}$ is weakly convergent to $\nu$
	by Prohorov's theorem (c.f. \cite[Theorem 8.6.2]{Bogachev}) and
	the sequence $\{\td{\eta}^{(k)}\}$ is convergent to $\xi$.
	We claim that both $\int_{\Y} d\mu^{(k)}(y)$ and
	$\sum_{j=1}^s\eta_j^{(k)}$ are bounded as $k\rightarrow\infty$. If
	it is not the case, then dividing both
	sides of \eqref{eq::epi3} by $\int_{\Y}
		d\mu^{(k)}(y)+\sum_{j=1}^s\eta_j^{(k)}$ and letting $k$ tend to $\infty$ yealds
	\[
		0\ge -\int_{\Y} p(x,y)d\nu(y)-\sum_{j=1}^s\xi_j\psi_j(x),\quad \forall x\in\RR^m.
	\]
	Recall that $p(x,\cdot): \RR^n\rightarrow \RR$ is continuous for
	all $x \in \RR^m$.
	As the Slater condition holds and $\Y$ is compact, there exist a
	point $u\in\RR^m$ and a constant $c<0$ such that
	\[
		\int_{\Y} p(u,y)d\nu(y)+\sum_{j=1}^s\xi_j\psi_j(u)\le c<0,
	\]
	a contradiction.
	Then, passing to subsequences if necessary, we may assume that there is a measure
	$\tau\in\mathcal{M}(\Y)$ and a point $\chi\in\RR_+^s$ such that
	the sequence $\{\mu^{(k)}\}$ is weakly convergent to $\tau$ by
	Prohorov's theorem again and the sequence $\{\eta^{(k)}\}$ is
	convergent to $\chi$. Letting $k$ tend to $\infty$ in
	\eqref{eq::epi3} yealds that for any
	$x\in\RR^m$
	\[
		v\ge \langle w, x\rangle-\int_{\Y}
		p(x,y)d\tau(y)-\sum_{j=1}^s\chi_j\psi_j(x),
	\]
	i.e., $(w,v)\in\epi\left(\int_{\Y}
	p(\cdot,y)d\tau(y)+\sum_{j=1}^s\chi_j\psi_j\right)^*$. As both
	$\int_{\Y} p(\cdot,y)d\tau(y)$ and $\sum_{j=1}^s\chi_j\psi_j$ are
	continuous on $\RR^m$, we have
	\[
\epi\left(\int_{\Y}
p(\cdot,y)d\tau(y)+\sum_{j=1}^s\chi_j\psi_j\right)^*=\epi\left(\int_{\Y}
p(\cdot,y)d\tau(y)\right)^*+\epi\left(\sum_{j=1}^s\chi_j\psi_j\right)^*
\subset\mathcal{C}_1+\mathcal{C}_2.
	\]
	 Therefore, the conclusion follows.
\end{proof}


\begin{thebibliography}{10}

\bibitem{Ahmadi2013}
A.~A. Ahmadi, A.~Olshevsky, P.~A. Parrilo, and J.~N. Tsitsiklis.
\newblock {NP}-hardness of deciding convexity of quartic polynomials and
  related problems.
\newblock {\em Mathematical Programming}, 137(1):453--476, 2013.

\bibitem{Ahmadi2012}
A.~A. Ahmadi and P.~A. Parrilo.
\newblock A convex polynomial that is not sos-convex.
\newblock {\em Mathematical Programming}, 135(1):275--292, 2012.

\bibitem{APgap}
A.~A. Ahmadi and P.~A. Parrilo.
\newblock A complete characterization of the gap between convexity and
  sos-convexity.
\newblock {\em SIAM Journal on Optimization}, 23(2):811--833, 2013.

\bibitem{Bajalinov2003}
E.~B. Bajalinov.
\newblock {\em Linear-Fractional Programming: Theory, Methods, Applications and
  Software}.
\newblock Springer, US, 2003.

\bibitem{Bogachev}
V.~I. Bogachev.
\newblock {\em Measure Theory, Volume II}.
\newblock Springer, Berlin, 2007.

\bibitem{Chankong1983}
V.~Chankong and Y.~Y. Haimes.
\newblock {\em Multiobjective Decision Making: Theory and Methodology}.
\newblock Amsterdam, North-Holland, 1983.

\bibitem{HaarDual}
A.~Charnes, W.~W. Cooper, and K.~Kortanek.
\newblock Duality in semi-infinite programs and some works of {Haar} and
  {Carath\'eodory}.
\newblock {\em Management Science}, 9(2):209--228, 1963.

\bibitem{Chuong2016}
T.~D. Chuong.
\newblock Nondifferentiable fractional semi-infinite multiobjective
  optimization problems.
\newblock {\em Operations Research Letters}, 44(2):260--266, 2016.

\bibitem{CFKmoment}
R.~E. Curto and L.~A. Fialkow.
\newblock Truncated {$K$}-moment problems in several variables.
\newblock {\em Journal of Operator Theory}, 54(1):189--226, 2005.

\bibitem{DD}
A.~Dhara and J.~Dutta.
\newblock {\em Optimality Conditions in Convex Optimization: A
  Finite-dimensional View}.
\newblock CRC Press, 2012.

\bibitem{Ehrgott2005}
M.~Ehrgott.
\newblock {\em Multicriteria Optimization (2nd ed.)}.
\newblock Springer, Berlin, 2005.

\bibitem{ACA2007}
C.~A. Floudas and O.~Stein.
\newblock The adaptive convexification algorithm: A feasible point method for
  semi-infinite programming.
\newblock {\em SIAM Journal on Optimization}, 18(4):1187--1208, 2007.

\bibitem{LSIP}
M.~{\'A}. Goberna and M.~A. L\'opez.
\newblock {\em Linear semi-infinite optimization}.
\newblock John Wiley \& Sons, Chichester, 1998.

\bibitem{Goberna2017}
M.~A. Goberna and M.~A. L{\'o}pez.
\newblock Recent contributions to linear semi-infinite optimization.
\newblock {\em 4OR}, 15(3):221--264, 2017.

\bibitem{Goberna2018}
M.~A. Goberna and M.~A. L\'{o}pez.
\newblock Recent contributions to linear semi-infinite optimization: an update.
\newblock {\em Annals of Operations Research}, 271(1):237--278, 2018.

\bibitem{GS2020}
F.~Guo and X.~Sun.
\newblock On semi-infinite systems of convex polynomial inequalities and
  polynomial optimization problems.
\newblock {\em Computational Optimization and Applications}, 75(3):669--699,
  2020.

\bibitem{Econstraint}
Y.~Haimes, L.~Lasdon, and D.~Wismer.
\newblock On a bicriterion formulation of the problems of integrated system
  identification and system optimization.
\newblock {\em IEEE Transactions on Systems, Man, and Cybernetics},
  SMC-1(3):296--297, 1971.

\bibitem{Haviland1936}
E.~K. Haviland.
\newblock On the momentum problem for distribution functions in more than one
  dimension. {II}.
\newblock {\em American Journal of Mathematics}, 58(1):164--168, 1936.

\bibitem{SRCSHN}
J.~Helton and J.~Nie.
\newblock Semidefinite representation of convex sets.
\newblock {\em Mathematical Programming}, 122(1):21--64, 2010.

\bibitem{LasserreHenrion}
D.~Henrion and J.~B. Lasserre.
\newblock Detecting global optimality and extracting solutions in {GloptiPoly}.
\newblock In D.~Henrion and A.~Garulli, editors, {\em Positive Polynomials in
  Control}, pages 293--310. Springer, Berlin, Heidelberg, 2005.

\bibitem{gloptipoly}
D.~Henrion, J.~B. Lasserre, and J.~L{\"o}fberg.
\newblock {GloptiPoly} 3: moments, optimization and semidefinite programming.
\newblock {\em Optimization Methods and Software}, 24(4-5):761--779, 2009.

\bibitem{Hettich1993}
R.~Hettich and K.~O. Kortanek.
\newblock Semi-infinite programming: theory, methods, and applications.
\newblock {\em SIAM Review}, 35(3):380--429, 1993.

\bibitem{Jeyakumar2010}
V.~Jeyakumar and G.~Y. Li.
\newblock Strong duality in robust convex programming: complete
  characterizations.
\newblock {\em SIAM Journal on Optimization}, 20(6):3384--3407, 2010.

\bibitem{Jeyakumar2012}
V.~Jeyakumar and G.~Y. Li.
\newblock Exact {SDP} relaxations for classes of nonlinear semidefinite
  programming problems.
\newblock {\em Operations Research Letters}, 40(6):529--536, 2012.

\bibitem{JEYAKUMAR201434}
V.~Jeyakumar, T.~S. Pham, and G.~Y. Li.
\newblock Convergence of the {L}asserre hierarchy of {SDP} relaxations for
  convex polynomial programs without compactness.
\newblock {\em Operations Research Letters}, 42(1):34--40, 2014.

\bibitem{CD2016}
C.~Josz and D.~Henrion.
\newblock Strong duality in {Lasserre's} hierarchy for polynomial optimization.
\newblock {\em Optimization Letters}, 10(1):3--10, 2016.

\bibitem{LasserreGlobal2001}
J.~B. Lasserre.
\newblock Global optimization with polynomials and the problem of moments.
\newblock {\em SIAM Journal on Optimization}, 11(3):796--817, 2001.

\bibitem{LasserreConvex}
J.~B. Lasserre.
\newblock Convexity in semialgebraic geometry and polynomial optimization.
\newblock {\em SIAM Journal on Optimization}, 19(4):1995--2014, 2009.

\bibitem{Lasserre2011}
J.~B. Lasserre.
\newblock An algorithm for semi-infinite polynomial optimization.
\newblock {\em TOP}, 20(1):119--129, 2012.

\bibitem{Lasserre05}
J.~B. Lasserre and T.~Netzer.
\newblock {SOS} approximations of nonnegative polynomials via simple high
  degree perturbations.
\newblock {\em Mathematische Zeitschrift}, 256(1):99--112, May 2007.

\bibitem{Laurent_sumsof}
M.~Laurent.
\newblock Sums of squares, moment matrices and optimization over polynomials.
\newblock In M.~Putinar and S.~Sullivant, editors, {\em Emerging Applications
  of Algebraic Geometry}, pages 157--270. Springer, New York, NY, 2009.

\bibitem{Lee2018}
J.~H. Lee and L.~G. Jiao.
\newblock Solving fractional multicriteria optimization problems with sum of
  squares convex polynomial data.
\newblock {\em Journal of Optimization Theory and Applications},
  176(2):428--455, 2018.

\bibitem{Li_Ng}
G.~Y. Li and K.~F. Ng.
\newblock On extension of {F}enchel duality and its application.
\newblock {\em SIAM Journal on Optimization}, 19(3):1489--1509, 2008.

\bibitem{YALMIP}
J.~L{\"{o}}fberg.
\newblock {YALMIP} : a toolbox for modeling and optimization in {MATLAB}.
\newblock In {\em 2004 {IEEE} International Conference on Robotics and
  Automation ({IEEE} Cat. No.04CH37508)}, pages 284--289, 2004.

\bibitem{Still2007}
M.~L\'{o}pez and G.~Still.
\newblock Semi-infinite programming.
\newblock {\em European Journal of Operational Research}, 180(2):491--518,
  2007.

\bibitem{Nguyen2016}
V.-B. Nguyen, R.-L. Sheu, and Y.~Xia.
\newblock An {SDP} approach for quadratic fractional problems with a two-sided
  quadratic constraint.
\newblock {\em Optimization Methods and Software}, 31(4):701--719, 2016.

\bibitem{NieFlatTruncation}
J.~Nie.
\newblock Certifying convergence of {L}asserre's hierarchy via flat truncation.
\newblock {\em Mathematical Programming, Ser. A}, 142(1-2):485--510, 2013.

\bibitem{NieFiniteCon}
J.~Nie.
\newblock Optimality conditions and finite convergence of {Lasserre's}
  hierarchy.
\newblock {\em Mathematical Programming, Ser. A}, 146(1--2):97--121, 2014.

\bibitem{NieSchweighofer}
J.~Nie and M.~Schweighofer.
\newblock On the complexity of {Putinar}'s positivstellensatz.
\newblock {\em Journal of Complexity}, 23(1):135 -- 150, 2007.

\bibitem{PR2009}
P.~Parpas and B.~Rustem.
\newblock An algorithm for the global optimization of a class of continuous
  minimax problems.
\newblock {\em Journal of Optimization Theory and Applications},
  141(2):461--473, 2009.

\bibitem{PR2000}
V.~Powers and B.~Reznick.
\newblock Polynomials that are positive on an interval.
\newblock {\em Transactions of the American Mathematical Society},
  352(10):4677--4692, 2000.

\bibitem{Putinar1993}
M.~Putinar.
\newblock Positive polynomials on compact semi-algebraic sets.
\newblock {\em Indiana University Mathematics Journal}, 42(3):969--984, 1993.

\bibitem{Reznick96someconcrete}
B.~Reznick.
\newblock Some concrete aspects of {H}ilbert's 17th problem.
\newblock In {\em Contemporary Mathematics}, volume 253, pages 251--272.
  American Mathematical Society, 2000.

\bibitem{Schaible2003}
S.~Schaible and J.~M. Shi.
\newblock Fractional programming: the sum-of-ratios case.
\newblock {\em Optimization Methods and Software}, 18(2):219--229, 2003.

\bibitem{ScheidererCurve}
C.~Scheiderer.
\newblock Sums of squares on real algebraic curves.
\newblock {\em Mathematische Zeitschrift}, 245(4):725--760, 2003.

\bibitem{Kmoment}
K.~Schm{\"u}dgen.
\newblock The {\it k}-moment problem for compact semi-algebraic sets.
\newblock {\em Mathematische Annalen}, 289(1):203--206, 1991.

\bibitem{ShapiroSIP}
A.~Shapiro.
\newblock Semi-infinite programming, duality, discretization and optimality
  conditions.
\newblock {\em Optimzation}, 58(2):133--161, 2009.

\bibitem{SS2000}
A.~Shapiro and K.~Scheinber.
\newblock Duality, optimality conditions and perturbation analysis.
\newblock In H.~Wolkowicz, R.~Saigal, and L.~Vandenberghe, editors, {\em
  Handbook of Semidefinite Programming - Theory, Algorithms, and Applications},
  pages 67--110. Kluwer Academic Publisher, Boston, 2000.

\bibitem{Stancu-Minasian1997}
I.~M. Stancu-Minasian.
\newblock {\em Fractional Programming: Theory, Methods and Applications}.
\newblock Springer, Netherlands, 1997.

\bibitem{ACA2012}
O.~Stein and P.~Steuermann.
\newblock The adaptive convexification algorithm for semi-infinite programming
  with arbitrary index sets.
\newblock {\em Mathematical Programming}, 136(1):183--207, 2012.

\bibitem{Sturm99}
J.~F. Sturm.
\newblock Using {SeDuMi 1.02}, a {Matlab} toolbox for optimization over
  symmetric cones.
\newblock {\em Optimization Methods and Software}, 11(1-4):625--653, 1999.

\bibitem{Verma2017}
R.~U. Verma.
\newblock {\em Semi-Infinite Fractional Programming}.
\newblock Infosys Science Foundation Series. Springer Singapore, 2017.

\bibitem{SIPPSDP}
L.~Wang and F.~Guo.
\newblock Semidefinite relaxations for semi-infinite polynomial programming.
\newblock {\em Computational Optimization and Applications}, 58(1):133--159,
  2013.

\bibitem{XSQ}
Y.~Xu, W.~Sun, and L.~Qi.
\newblock On solving a class of linear semi-infinite programming by {SDP}
  method.
\newblock {\em Optimization}, 64(3):603--616, 2015.

\bibitem{ZZ1}
G.~Zalmai and Q.~Zhang.
\newblock Semiinfinite multiobjective fractional programming, {P}art {I}:
  Sufficient efficiency conditions.
\newblock {\em Journal of Applied Analysis}, 16(2):199--224, 2010.

\bibitem{ZZ2}
G.~Zalmai and Q.~Zhang.
\newblock Semiinfinite multiobjective fractional programming, {P}art {II}:
  Duality models.
\newblock {\em Journal of Applied Analysis}, 17(1):1--35, 2011.

\end{thebibliography}

\end{document}